\documentclass[11pt,a4paper,twoside]{article}
\usepackage{amsmath,amsfonts,amssymb,amsthm,bbm,latexsym,mathrsfs}
\usepackage{graphicx,color,epsfig,fancyhdr,dsfont}
\usepackage{enumerate}
\usepackage{hyperref}
\usepackage{indentfirst}
\usepackage[all]{hypcap}
\usepackage[affil-it]{authblk}
\allowdisplaybreaks
\textwidth 16.6cm                 
\textheight 22cm                
\evensidemargin 6mm             
\oddsidemargin 6mm              
\topmargin 10mm                  
\setlength{\parskip}{1.0ex}    

\hoffset=-1truecm
\voffset=-1.6truecm

\newtheorem{thm}{Theorem}[section]
\newtheorem{lem}[thm]{Lemma}
\newtheorem{cor}[thm]{Corollary}
\newtheorem{prop}[thm]{Proposition}
\newtheorem{defi}[thm]{Definition} 
\newtheorem{rmk}[thm]{Remark}
\newtheorem{exmp}[thm]{Example}

\newcommand{\E}{\mathbb{E}}

\renewcommand{\theequation}{\arabic{section}.\arabic{equation}}
 \numberwithin{equation}{section}

\renewcommand{\E}{\mathbb{E}}

\allowdisplaybreaks

\begin{document}

\baselineskip15pt

\title{Anticipating Random Periodic Solutions--II. 
 SPDEs with Multiplicative Linear Noise}
 
\author[1]{
Chunrong Feng}
\author [2] {Yue Wu}
\author[1]{Huaizhong Zhao}
\affil[1]{Department of Mathematical Sciences, Loughborough
University, LE11 3TU, UK}
 \affil[2] {Institute of Mathematics, TU Berlin, Germany}
\affil[ ]{C.Feng@lboro.ac.uk, wu@math.tu-berlin.de, H.Zhao@lboro.ac.uk}
\date{}

\maketitle
\newcounter{bean}
\vskip-10pt

\begin{abstract}

In this paper, we study the existence of random periodic solutions for
semilinear stochastic partial differential equations with multiplicative linear noise on a bounded open domain ${\cal O}\subset {\mathbb R}^d$ with smooth boundary. We identify them with the solutions of coupled forward-backward infinite horizon stochastic
integral equations in $L^2({\cal O})$. We then use generalized Schauder's fixed point theorem,
 the relative compactness of Wiener-Sobolev spaces in $C^0([0, T], L^2(\Omega\times{\cal O}))$ and a localization argument
to prove the existence of solutions of the infinite horizon integral equations, which immediately implies the existence of the random periodic solution to the 
corresponding SPDEs. As an example, we apply our result to the stochastic 
Allen-Cahn equation with a periodic potential and prove the existence of a random periodic solution using a localisation argument.  

\medskip

\noindent
{\bf Keywords:} random periodic solutions, 
semilinear stochastic partial differential equations, relative compactness, Malliavin derivative, coupled forward-backward infinite horizon random integral equations, stochastic Allen-Cahn equation with a periodic potential.

\end{abstract}

 \renewcommand{\theequation}{\arabic{section}.\arabic{equation}}

\section{Introduction}\label{sec:1}

The concept of random periodic solutions of stochastic dynamical systems offers a possible perspective to investigate periodic-like long time behaviors for a wide range of stochastic (partial) differential equations. It serves as the corresponding notion of deterministic periodic solutions in the stochastic counterpart. The definition of pathwise random periodic solutions of $C^1$-cocycles was given in \cite{zh-zheng}, based on which the authors showed the existence of such periodic solutions on a cylinder using the method of random invariant set, Lyapunov exponents and the pullback of the cocycle. The authors were aware that, due to the presence of noise, a periodic curve cannot be the path of the random dynamical systems, where a `periodic curve' is referred to as a `closed curve' in deterministic dynamical systems. In other words, the idea of simply having $\omega$, a realization from the random source, fixed and drawing an analogy between deterministic dynamical systems and random dynamical systems will not provide the right concept  of a random periodic solution. One can not expect to find a path $\phi$ of the stochastic dynamical system satisfying $\phi(t,\omega)=\phi(t+\tau,\omega)$ for a positive constant $\tau$.  This can be well justified by the well-known definition of stationary solutions (c.f. \cite{ar}, \cite{du-lu-sc1}, \cite{kh-ma-si},   \cite{li-zh}, \cite{mattingly}, \cite{si2}). Note a random periodic solution degenerates to a stationary solution provided the period can be arbitrary. With this observation, Feng, Zhao and Zhou \cite{feng-zhao-zhou} proposed the following definition of random periodic solutions for stochastic semiflows which describes complex behaviour of the mixture of randomness and periodicity. Given $H$, a separable Banach space, and $\Delta:=\{(t,s)|t\geq s, t,s\in \mathbb{R}\}$, consider a stochastic semi-flow  $u$: $\Delta\times H\times\Omega\to H$, which satisfies the following standard condition
\begin{equation}
u(t,r,\omega)=u(t,s,\omega)\circ u(s,r,\omega),\ \mbox{\ for all }r\leq s\leq t, \ r,s,t\in \mathbb{R},\ \mbox{\ for\ }a.e.\ \omega\in\Omega. 
\end{equation}
\begin{defi}{\upshape (Random Periodic Solutions/Paths for Semiflows)}\label{RPSS}
A random periodic solution/path of period $\tau$ of the semi-flow $u:\Delta\times H\times\Omega\to H$ is an ${\cal F}$-measurable map $Y: \mathbb{R}\times\Omega\to H$ such that
\begin{equation}\label{eqn:RPS_def}
u(t,s, Y(s,\omega), \omega)=Y(t,\omega),\ Y(s+\tau,\omega)=Y(s, \theta_\tau \omega),\ t,s\in \mathbb{R}, \ t\geq s, \ a.e.\ \omega\in\Omega.
\end{equation}
\end{defi}

Note the definition of random periodic solution is consistent with that of deterministic periodic solution when $\omega$ disappears from the system, i.e., being a deterministic system. It is consistent with 
the definition of stationary solution when Eqn. (\ref{eqn:RPS_def}) holds regardless of the value $\tau$.

The notion of random periodic paths has been used in several
 areas such as random attractors (\cite {bates-lu-wang}), stochastic resonance (\cite{cherubini}), 
 strange attractors (\cite{lian-huang}) and climate dynamics (\cite{chekroun}). The equivalence of 
 random periodic paths and periodic measures were observed in \cite{feng-zhao2}. The ergodicity of periodic measure and the existence of pure imaginary eigenvalues of the infinitesimal 
 generator of the Markov semigroup, provide a possible  spectral analytic approach to study random periodicity 
of certain physically interesting systems.

The work on the existence of random periodic solutions to semi-linear S(P)DEs with additive noise in the literature
 includes \cite{feng-zhao-zhou} and \cite{feng-zhao}. However, the approach of identifying random periodic solutions as the solutions of forward-backward coupled infinite horizon stochastic integral equations (IHSIEs) cannot be directly applied to SDEs with multiplicative noise. Because in this case, the random periodic solution is anticipating. Thus the stochastic integral involved in IHSIEs is of the Skorokhod type. One cannot estimate the Malliavin derivative when a Skorokhod integral is involved. To overcome this difficulty, Feng, Wu and Zhao \cite{feng-wu-zhao} introduced stochastic linear evolution operator to accommodate the noise resource and identified the random periodic solutions as the solutions of the corresponding forward-backward coupled infinite horizon random integral equations (IHRIEs). The absence of stochastic integrals in the IHRIEs makes the analysis work without having to deal with the Skorokhod integral directly.  
 
In this paper, we push the idea to the infinite-dimensional setting. This is a nontrivial task.
Let ${\cal O}$ be a bounded open subset of $\mathbb{R}^d$ with smooth boundary and ${\cal L}$ be a second order differential operator with Dirichlet boundary condition on ${\cal O}$,
\begin{eqnarray}\label{1.51}
{\cal L}u={1\over 2} \sum_{i,j=1}^d {\partial \over \partial x_j} \left(a_{ij}(x){\partial u \over {\partial x_i}}\right)+c(x)u,
\end{eqnarray}
and satisfy a uniformly elliptic condition:
\vskip3pt

{\bf Condition (L)}: The coefficients $a_{ij}, c$ are smooth functions on $\bar{{\cal O}}$, $a_{ij}=a_{ji}$, and there exists a constant $\gamma>0$ such that $\sum_ {i,j=1}^d a_{ij} \xi_i\xi_j\geq \gamma |\xi|^2$ for any $\xi=(\xi_1, \xi_2,\cdots,\xi_d)\in {\mathbb{R}}^d$. 
\vskip3pt

Under the above condition, ${\cal L}$ is a self-adjoint uniformly elliptic operator so it has discrete real-valued eigenvalues $\mu_1> \mu_2> \cdots$ such that $\mu_k\to{-\infty}$ when $k\to \infty$. Denote by $\{\phi_k\in L^2({\cal O}),\ k\geq 1\}$ a complete orthonormal system of  eigenfunctions of ${\cal L}$ with corresponding eigenvalues $\mu_k,\ k\geq 1$. Here the space $L^2({\cal O})=:\mathbb H$ is a standard square integrable measurable function space vanishing on the boundary with norm $||\cdot ||_{L^2({\cal O})}$.
By a standard notation $H_0^1({\cal O})$ we denote the Sobolev space of the square integrable measurable functions having the first order weak derivative in $\mathbb H$ and vanishing at the boundary $\partial {\cal O}$. This is a Hilbert space with inner product $(u,v)=\int_{{\cal O}} u(x)v(x)dx+\int_{{\cal O}}\langle \nabla  u(x), \nabla  v(x)\rangle dx$, for any $u, v\in H_0^1({\cal O})=:\mathbb K$.

Assume that
$W_t$, $t\in \mathbb{R}$, is an $\mathbb H$-valued cylindrical Brownian motion defined on the canonical filtered Wiener space  $(\Omega,\mathcal{F},(\mathcal{F}_t)_{t\in \mathbb{R}},\mathbb{P})$, with covariance space $\mathbb K$ represented by
\begin{equation}\label{W1}
W_t:=\sum_{k=1}^{\infty}W^k_t f_k(x),\ \ t\in \mathbb{R},
\end{equation}
where $\{f_k, \ k\geq 1\}$ is a complete orthonormal basis of $\mathbb K$,
and the driving noise 
$W^k$ are mutually independent one-dimensional two-sided standard Brownian motions on $(\Omega, {\cal F}, ({\mathcal{F}}^t)_{t\in \mathbb{R}}, \mathbb{P})$, ${\cal F}_s^t:=\sigma(W^k_u-W^k_v, s\leq v\leq u\leq t,\ k\in\mathbb{N})$ and ${\cal F}^t:=\vee_{s\leq t}{\cal F}_s^t$.  Set $\theta: (-\infty,\infty)\times\Omega\to \Omega$ as a flow such that $\theta_t\omega^k(s)=W^k(t+s)-W^k(t)$, for $k\in 1,2,\cdots$. It is well known that ($\Omega, {\cal F}, \mathbb{P}, (\theta_t)_{t\in \mathbb{R}}$) is a metric dynamical system.

Consider a $\tau$-periodic semilinear SPDE of Stratonovich type with multiplicative linear noise, i.e.,
\begin{eqnarray}\label{orig}
\Bigg\{\begin{array}{l}du(t,x)=\mathcal{L}u(t,x)\,dt+F(t,u(t,x))\, dt+Bu(t,x)\circ  dW(t),  \ \ \ \ \ \ t\geq s \\
u(s)=\psi\in \mathbb H, \\
u(t)|_{\partial {\cal O}}=0.
                                     \end{array}
\end{eqnarray}
where conditions on $B$ will be specified in Section \ref{sec:2} and $F$ satisfies:\\
\\
{\bf Condition (P)}: There exists a constant $\tau>0$ such that for any $t\in \mathbb{R}$, $u\in \mathbb R,$
$$F(t,u)=F(t+\tau,u).
$$

An operator $\Phi : \mathbb{R}\times\Omega\rightarrow L(\mathbb{H})$, defined by
\begin{align}
\begin{split}\label{phi}
\Bigg\{\begin{array}{l}d\Phi(t)=\mathcal{L}\Phi(t)\,dt+B \Phi(t)\circ  d W_t,   \\
\Phi(0)=I\in L({\mathbb{H}}),
                                     \end{array}
\end{split}
\end{align}
then the mild solution of (\ref{orig}) (c.f. \cite {da-za1}) via (\ref{phi}) can be written as (c.f. \cite{mo-zh-zh}) 
\begin{align}
\begin{split}
\label{NJ21}
\bigg\{\begin{array}{l}u(t,s,\psi,\omega)(x)=\Phi(t-s, \theta_s \omega)\psi(x)+\int_s^t\Phi(t-{\hat{s}}, \theta_{\hat{s}}\omega)F(\hat{s},u({\hat{s}},s,\psi,\omega))(x)\, d{\hat{s}},  \ \ t\geq s,\\
u(s,s,\psi,\omega)(x)=\psi\in \mathbb H.
                                     \end{array}
\end{split}
\end{align}
Here $F$ is understood as $F:{\mathbb R}^1\times L^2({\cal O})\to L^2({\cal O})$ in the sense defined by the Nemytskii operator (see (\ref{zhao51})).
But if $\psi$ is anticipating, we do not know whether the mild solution of (\ref{orig}) is equivalent to (\ref{NJ21}). This is because for substitution theorem of SPDEs, more condition on Malliavin derivative is needed, see \cite{mo-zh}. To avoid this, we will discuss the random periodic solution of Eqn. (\ref{NJ21}). 

In the case of the infinite-dimensional setting, techniques in \cite{feng-wu-zhao} are not adequate. We need an explicit 
representation of the stochastic evolution operator which is given in Proposition \ref{zhao31}.  The stochastic operator is a map from $L^2({\cal O})$ to itself and still permits an 
exponential dichotomy property. The details of decomposition in $L^2({\cal O})$ are essential for this analysis, especially the boundedness by Lyapunov exponents in each direction. As a 
consequence, truncated version of the stochastic evolution operator, based on a delicate decomposition, is specified along each of the directions. 
See Eqn. (\ref{PHI-B1}) and (\ref{PHI+B1}). This is followed by truncated IHRIEs Eqn. (\ref{VN}) in infinite dimensions. 
One can expect with infinite terms involved, the complexity of analysis and computation increase substantially.

On the other hand, considering projected stochastic evolution operator along each of the directions allows suitable estimations in each reduced-dimensional subspace. This prevents from overestimating where the framework in \cite{feng-wu-zhao} fails, and helps to achieve the existence of random periodic solution to SPDE (\ref{NJ21}):
\begin{thm}[Existence of random periodic solution]\label{Main}
Assume Condition (L) and Condition (B) hold in Section \ref{sec:2}. Let $F:\mathbb{R} \times \mathbb{R}^d\to \mathbb{R}$ be a continuous map, globally
bounded with globally bounded Jacobian $\nabla F(t,\cdot)$ and satisfy Condition (P). There exists at least one random periodic solution
 of the semiflow generated by Eqn. (\ref{NJ21}).
\end{thm}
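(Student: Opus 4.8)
The plan is to identify a random periodic solution of the semiflow generated by \eqref{NJ21} with a solution of a coupled forward--backward infinite horizon random integral equation (IHRIE), in which the time integrals are ordinary pathwise integrals of random quantities so that no Skorokhod integral appears. Using the explicit representation and exponential dichotomy of the stochastic evolution operator from Proposition \ref{zhao31}, I would decompose $\mathbb H=L^2({\cal O})$ into the finite-dimensional unstable subspace $\mathbb H^+$, spanned by the eigendirections $\phi_k$ of positive Lyapunov exponent, and the complementary infinite-dimensional stable subspace $\mathbb H^-$, with projections $P^+,P^-$ and restricted evolutions $\Phi^\pm(t,s,\omega)$ coming from the cocycle $\Phi(t-s,\theta_s\omega)$. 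A random periodic solution is then sought as an ${\cal F}$-measurable map $Y:\mathbb R\times\Omega\to\mathbb H$ solving
\begin{equation}
Y(t,\omega)=\int_{-\infty}^{t}\Phi^-(t,s,\omega)\,P^-F\big(s,Y(s,\omega)\big)\,ds-\int_{t}^{\infty}\Phi^+(t,s,\omega)\,P^+F\big(s,Y(s,\omega)\big)\,ds.
\end{equation}
The first step is to verify that any solution of this IHRIE satisfies the two relations in \eqref{eqn:RPS_def}: the cocycle property of $\Phi$ yields $u(t,s,Y(s,\omega),\omega)=Y(t,\omega)$, while the change of variables $s\mapsto s+\tau$, the shift $\theta_\tau$, and Condition (P) on $F$ give $Y(s+\tau,\omega)=Y(s,\theta_\tau\omega)$.

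Because $\mathbb H^-$ is infinite-dimensional, the stable integral must be controlled uniformly over the directions, and here I would use the truncated operators $\Phi^\pm_N$ of \eqref{PHI-B1} and \eqref{PHI+B1}, bounded directionwise by the Lyapunov exponents, together with the associated truncated IHRIE \eqref{VN}. Let $\mathcal M_N$ be the map sending $Y$ to the right-hand side of \eqref{VN}. On a closed, bounded, convex set $\mathcal S\subset C^0([0,\tau],L^2(\Omega\times{\cal O}))$, with $Y$ extended $\tau$-periodically to $\mathbb R$, the global bound on $F$ and the dichotomy estimates show $\mathcal M_N(\mathcal S)\subset\mathcal S$, while the Lipschitz bound furnished by the global bound on $\nabla F$ together with dominated convergence give continuity of $\mathcal M_N$. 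The essential point is relative compactness of $\mathcal M_N(\mathcal S)$: differentiating \eqref{VN} in the Malliavin sense and invoking the global bound on $\nabla F$ produces uniform estimates of $Y$ and its Malliavin derivative $DY$ in the Wiener--Sobolev norm which, summed against the decay provided by $\mu_k\to-\infty$, place $\mathcal M_N(\mathcal S)$ in a relatively compact subset of $C^0([0,\tau],L^2(\Omega\times{\cal O}))$. The generalized Schauder fixed point theorem then yields a fixed point $Y_N$, a random periodic solution of the truncated system.

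The final step is the localization argument in $N$. The Wiener--Sobolev estimates obtained above are uniform in $N$, so $\{Y_N\}$ is relatively compact and a subsequence converges in $C^0([0,\tau],L^2(\Omega\times{\cal O}))$ to some $Y$. Passing to the limit in \eqref{VN}, using $\Phi^\pm_N\to\Phi^\pm$ directionwise together with the global boundedness of $F$ and dominated convergence, identifies $Y$ as a solution of the full IHRIE; by the first step $Y$ is then the desired random periodic solution of \eqref{NJ21}.

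I expect the main obstacle to be the compactness step in genuinely infinite dimensions. One must produce Malliavin-derivative estimates for the forward--backward integral that are simultaneously uniform in the truncation level $N$ and summable over the infinitely many stable eigendirections, so that the dichotomy bounds, which decay only at the rate dictated by $\mu_k\to-\infty$, combine with the Wiener--Sobolev criterion to deliver genuine compactness rather than mere boundedness. Ensuring that the limit passes through the infinite sum over directions and the infinite time horizon at once, while retaining control of the anticipating backward integral on $\mathbb H^+$, is the delicate part of the argument.
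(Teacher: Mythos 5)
Your first two steps coincide with the paper's own route: the identification of random periodic solutions with solutions of the IHRIE \eqref{V} is Theorem \ref{THM1C4M}, and the truncation \eqref{PHI-B1}--\eqref{PHI+B1} together with the generalized Schauder argument and Wiener--Sobolev compactness is exactly Theorem \ref{THMC4M} (Lemmas \ref{LEMMA3C4} and \ref{LEMMA4C4}). The gap is in your final step. You claim that ``the Wiener--Sobolev estimates obtained above are uniform in $N$,'' so that $\{Y^N\}$ is relatively compact and a subsequence converges in $C^0([0,\tau],L^2(\Omega\times{\cal O}))$ to a solution of \eqref{V}. This is false: every estimate for the truncated map ${\cal M}^N$ carries the truncation level, because the truncated operator is only bounded by $\|\Phi^N(t,\theta_{\hat s}\omega)P^k\|\leq N e^{\frac{1}{2}\mu_k t}e^{\Lambda|\hat s|}$. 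For instance, the $L^2$ bound in Lemma \ref{LEMMA3C4} is of order $N^2\|F\|_\infty^2$, and the Malliavin bound $\rho^N$ solving \eqref{NBJIHU} involves $K_1^N\propto N^2$. These bounds necessarily blow up as $N\to\infty$, since the untruncated operator $\Phi(t,\theta_{\hat s}\omega)P^k$ is dominated only by the random constant $C_\Lambda(\omega)e^{\frac{1}{2}\mu_k t}e^{\Lambda|\hat s|}$ of Lemma \ref{ExponentialDichominy}, and $C_\Lambda$ is merely a.s.\ finite and tempered --- no moment bound such as $C_\Lambda\in L^2(\Omega)$ is available. For the same reason, your appeal to dominated convergence for passing to the limit in \eqref{VN} inside $L^2(\Omega\times{\cal O})$ has no integrable dominating function.

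The paper's localization (Theorem \ref{Main2}) is pathwise, not a limit in $N$. Set $\Omega_N:=\{\omega:\ C_\Lambda(\omega)<N\}$; for $\omega\in\Omega_N$ the dichotomy bound \eqref{412} forces the minimum in \eqref{PHI-B1}--\eqref{PHI+B1} to equal $1$ for all admissible $t,\hat s,k$, i.e.\ $\Phi^N(\cdot,\theta_{\hat s}\omega)P^k=\Phi(\cdot,\theta_{\hat s}\omega)P^k$ at that $\omega$, so the Schauder fixed point $Y^N$ of the truncated equation already solves the \emph{untruncated} IHRIE \eqref{V} pathwise on $\Omega_N$. One then patches $Y:=Y^N$ on $\Omega_N^{\ast}\setminus\Omega_{N-1}^{\ast}$, where $\Omega_N^{\ast}=\bigcup_{n}\theta_{n\tau}^{-1}\Omega_N$ is $\theta_{n\tau}$-invariant and $\bigcup_N\Omega_N^{\ast}$ has full measure because $\mathbb{P}(\Omega_N)\to 1$ by temperedness; the invariance is what preserves the relation $Y(t+\tau,\omega)=Y(t,\theta_\tau\omega)$ across the patching. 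No convergence in $N$ and no uniform-in-$N$ estimate is ever needed --- and indeed the resulting $Y$ is in general only a measurable pathwise solution, not an element of $C^0([0,\tau],L^2(\Omega\times{\cal O}))$, which is a further indication that your $L^2$-compactness route cannot produce it.
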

The paper is organized as follows: In Section \ref{sec:2} we specify notation and conditions for the main result. We introduce 
stochastic evolution operator, discuss multiplicative ergodic theorem in infinite dimensional setting and define IHRIEs with/without truncated evolution operator.
Two crucial results 
are proved in Section \ref{sec:4}. Theorem \ref{THM1C4M} is to inentify the random periodic solutions to SPDE (\ref{orig}) with the solutions for the IHRIE and 
Theorem \ref{Main2} is to address the existence of solutions to the IHRIE.  For this we need to estimate the Malliavin derivatives, their equi-continuity and 
Sobolev norms  thus to establish the relative compactness of a sequence in  $C([0,\tau],L^2(\Omega\times {\cal O}))$.
In Section \ref{sec:5}, we consider the stochastic Allen-Cahn equation as an example. Here $F(t,u)$ satisfies Condition (P), continuity in $t$, and a weakly dissipative condition 
\begin{eqnarray}\label{eqn4.1}
uF(t,u)\leq -M u^2+L,
\end{eqnarray}
where $M,L>0$ are constant and $M>{\sigma^2\over 2}$, $\sigma^2:=\max_i \sigma_i^2$.

 In the weakly dissipative case, 
 the pull-back convergence method (c.f. \cite{crauel-flandoli}, \cite{feng-liu-zhao}, \cite {schmalfuss}) does not work well. Although Theorem \ref{Main} cannot be applied directly to this case,
the infinite horizon stochastic integral equation method, together with the truncation and localization techniques provides a powerful tool to study random periodic solutions of SPDEs with weakly dissipative coefficients.

\section{The exponential dichotomy and IHRIEs}\label{sec:2}

Firstly, we will introduce some notations and conditions. In particular, Condition (L), Condition (B) and Condition (P), which will be adopted throughout the paper, will be given. 


From the uniformly elliptic condition, i.e., {\bf{Condition (L)}} in Section \ref{sec:1}, it is not difficult to know that $\phi_k\in \mathbb K$ and there exists a constant $C$ such that 
\begin{eqnarray}\label{eqn1.6}
||\nabla \phi_k||_{L^2({\cal O})}\leq C\sqrt {|\mu_k|}.
\end{eqnarray}
Besides, with the heat kernel $K(t,x,y)$ of the second order differential operator $\cal L$,
\begin{eqnarray}\label{T1}
(T_t\phi)(x)=\int_{{\cal O}}K(t,x,y)\phi(y)dy,
\end{eqnarray}
defines a linear operator $T_t:\mathbb H\to \mathbb H$.
And by Mercer's theorem (\cite[Theorem 3.17]{hoch}), we have
\begin{eqnarray}\label{T2}
K(t, x,y)=\sum_{k=1}^\infty e^{\mu_k t}\phi_k(x)\phi_k(y).
\end{eqnarray}
 

Now denote by $L(\mathbb{K},\mathbb{H})$ the Banach space of all linear and bounded operators $J:\mathbb{K}\to \mathbb{H}$, with the norm 
$$\|J\|=\sup_{\|v\|_{\mathbb{K}}=1}\|J(v)\|_{\mathbb{H}}.$$
Denote by $L_2(\mathbb{K},\mathbb{H})$ the Hilbert space of all Hilbert-Schmidt operators $J:\mathbb{K}\to \mathbb{H}$, given the norm
$$\|J\|_2:=\left[\sum_{k=1}^{\infty} \|J(f_k)\|^2_{\mathbb{H}}\right]^{1/2}.$$

 Let $B: \mathbb{H}\to L_2(\mathbb K, \mathbb{H})$ be a bounded linear operator such that
\begin{equation}\label{B1}
B(u)(v)=\sum_{k=1}^{\infty}\sigma_k \langle u,\phi_k\rangle \langle v,f_k\rangle \phi_k,\ \ u\in \mathbb{H}, v\in \mathbb {K}.
\end{equation}

We will need the following condition in the main results of the paper.

\noindent
{\bf Condition (B)}: 
\begin{equation}\label{SGM}
\sum_{k=1}^{\infty}\sigma_k^2<\infty.
\end{equation} 

It is not hard to show the commutative property between $T_t$ and $B$ given in the next proposition.
\begin{prop}\label{COMMUNI} The operator $T_t$ defined by (\ref{T1}) and (\ref{T2}) are commutative with $B$ defined by (\ref{B1}). Moreover, it also holds that for any $u,v\in\mathbb{H}$, 
\begin{equation}\label{COMMUNI2} 
T_tB(u)(v)(x)=B(T_t u)(v)(x).
\end{equation}
\end{prop}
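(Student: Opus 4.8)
The plan is to verify the pointwise identity (\ref{COMMUNI2}) by expanding both sides in the orthonormal eigenbasis $\{\phi_k\}$ of $\mathcal{L}$ and using that each $\phi_k$ is an eigenfunction of the semigroup $T_t$; commutativity of $T_t$ and $B$ as operators is then just a restatement of this identity holding for all $u,v$. First I would record the two elementary facts the computation rests on. Since $\mathcal{L}\phi_k=\mu_k\phi_k$, formula (\ref{T2}) together with (\ref{T1}) and the orthonormality of $\{\phi_k\}$ in $\mathbb{H}$ gives $T_t\phi_k=e^{\mu_k t}\phi_k$; more generally, for any $u\in\mathbb{H}$ the spectral expansion yields
\[
T_t u=\sum_{k=1}^\infty e^{\mu_k t}\langle u,\phi_k\rangle\,\phi_k,\qquad \langle T_t u,\phi_k\rangle=e^{\mu_k t}\langle u,\phi_k\rangle .
\]

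Next I would compute the two sides separately. Starting from the definition (\ref{B1}), $B(u)(v)=\sum_{k}\sigma_k\langle u,\phi_k\rangle\langle v,f_k\rangle\phi_k$, and applying $T_t$ term by term gives
\[
T_t B(u)(v)=\sum_{k=1}^\infty \sigma_k\,e^{\mu_k t}\langle u,\phi_k\rangle\langle v,f_k\rangle\,\phi_k .
\]
For the right-hand side I would apply (\ref{B1}) to $T_t u$ and substitute $\langle T_t u,\phi_k\rangle=e^{\mu_k t}\langle u,\phi_k\rangle$, obtaining the identical series. A term-by-term comparison then establishes (\ref{COMMUNI2}), and since $u,v$ are arbitrary this is exactly the asserted commutativity.

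The only point requiring care — the main, though mild, obstacle — is justifying the term-by-term application of $T_t$ to the infinite series defining $B(u)(v)$, i.e. the interchange of $T_t$ with the sum. I would handle this by noting that the series for $B(u)(v)$ converges in $\mathbb{H}$: by orthonormality and the crude bound $|\langle u,\phi_k\rangle|^2\le\|u\|_{\mathbb H}^2$, one has $\|B(u)(v)\|_{\mathbb H}^2=\sum_k\sigma_k^2|\langle u,\phi_k\rangle|^2|\langle v,f_k\rangle|^2\le (\sup_k\sigma_k^2)\,\|u\|_{\mathbb H}^2\,\|v\|_{\mathbb K}^2<\infty$, where $\sup_k\sigma_k^2<\infty$ follows from Condition (B). Since $T_t$ is a bounded linear operator on $\mathbb{H}$ — indeed $\|T_t u\|_{\mathbb H}^2=\sum_k e^{2\mu_k t}|\langle u,\phi_k\rangle|^2\le e^{2\mu_1 t}\|u\|_{\mathbb H}^2$ for $t\ge 0$, as $\mu_1$ is the largest eigenvalue — it is continuous and may be exchanged with the convergent sum, which legitimizes the computation above and completes the proof.
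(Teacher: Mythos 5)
Your proof is correct and follows essentially the same route as the paper: both expand $B(u)(v)$ and $T_t$ in the eigenbasis $\{\phi_k\}$, use orthonormality to identify the coefficients $e^{\mu_k t}\langle u,\phi_k\rangle=\langle T_tu,\phi_k\rangle$, and conclude by term-by-term comparison. Your explicit justification of interchanging $T_t$ with the infinite sum (via boundedness of $T_t$ and $\sup_k\sigma_k^2<\infty$) is a detail the paper passes over silently, but it does not change the nature of the argument.
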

\begin{proof}
For any $u\in \mathbb{H}$, $v\in \mathbb{K}$,
\begin{eqnarray*}
T_t B(u)(v)(x)&=&\int_{{\cal O}}\sum_{j=1}^{\infty}e^{\mu_j t}\phi_j(y)\phi_j(x) \sum_{k=1}^{\infty}\sigma_k \langle u,\phi_k\rangle \langle v, f_k\rangle \phi_k(y)dy\\
& =&\sum_{j=1}^{\infty}e^{\mu_j t}\sigma_j \langle u,\phi_j\rangle \langle v,f_j\rangle\phi_j(x) \\
&=&\sum_{j=1}^{\infty} \sigma_j  \langle u,e^{\mu_j t}\phi_j\rangle \langle v,f_j\rangle\phi_j(x)\\
& =&\sum_{j=1}^{\infty}\sigma_j\langle  T_t u,\phi_j\rangle \langle v,f_j\rangle \phi_j(x)\\
&=& B(T_t u)(v)(x),
\end{eqnarray*}
where $\langle \cdot, \cdot\rangle$ denotes the inner product in $\mathbb H$. 
So (\ref{COMMUNI2}) follows. 
\end{proof}

Note that Equation (\ref {orig}) generates
a semi-flow $u:\Delta\times L^2({\cal O})\times\Omega\to L^2({\cal O})$ when $F:\mathbb{R}\times\mathbb{R}\to \mathbb{R}$ is continuous in time and Lipschitz continuous in $u$ (\cite{mo-zh-zh}). As to the continuous function $F:\mathbb{R}\times \mathbb{R} \to \mathbb{R}$,  we can define the Nemytskii operator $F: \mathbb{R}\times L^2({\cal O})\to L^2({\cal O})$ with the same notation
\begin{eqnarray}\label{zhao51}
F(t, u(t))(x)&=&F(t, u(t,x)),
\end{eqnarray}
and
\begin{eqnarray}
F^i(t, u(t))(x)&=&\int_{{\cal O}} F(t, u(t))(y) \phi_i(y)dy\phi_i(x),\ x\in {\cal O},\ u\in L^2({\cal O}).
\end{eqnarray}

Next, we will give the exponential dichotomy and IHRIEs. We first define a stochastic evolution operator, Eqn. (\ref{phi}), to accommodate all the noise terms, which leads to the mild solution to (\ref{orig}) in terms of random integrals. Then the properties of this stochastic evolution operator are investigated, mainly including its representation form and exponential dichotomy. This is followed by introducing the coupled forward-backward infinite horizon random integral equation (IHRIE) based on Eqn. (\ref{phi}), which is crucial for the main result. Finally a truncated version of IHRIE is defined for convergence purpose, along with the corresponding truncated version of Eqn. (\ref{phi}). 
 
Now we introduce an operator $\Phi : \mathbb{R}\times\Omega\rightarrow L(\mathbb{H})$ defined in (\ref{phi}).
%
A useful representation of $\Phi$ below simply follows the commutative property between  $T_t$ and $B$.
\begin{prop} \label{zhao31}
Assume Condition (B) holds, then $\Phi$ defined in (\ref{phi}) has the decomposition mapping from $\mathbb H$ to $\mathbb H$ as
\begin{equation}\label{PExpo}
\Phi(t,\omega)\cdot=T_te^{BW(t)}(\cdot)=T_t\sum_{k=1}^{\infty}e^{\sigma_k W^k_t}\langle \cdot,\phi_k\rangle\phi_k.
\end{equation}  
\end{prop}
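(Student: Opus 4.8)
The plan is to guess the explicit form $\Phi(t,\omega)\cdot = T_t e^{BW(t)}(\cdot)$, verify directly that it solves the linear operator equation (\ref{phi}) in the Stratonovich sense, and then conclude by uniqueness. Throughout I write $M_t$ for the diagonal operator $M_t u := \sum_{k=1}^{\infty} e^{\sigma_k W_t^k}\langle u,\phi_k\rangle\phi_k$, so that the claimed identity (\ref{PExpo}) reads $\Phi(t)=T_t M_t$. The first task is to check that $M_t$ is a well-defined bounded operator on $\mathbb H$. Since
$$
\|M_t u\|_{\mathbb H}^2=\sum_{k=1}^{\infty} e^{2\sigma_k W_t^k}|\langle u,\phi_k\rangle|^2\le \Big(\sup_k e^{2\sigma_k W_t^k}\Big)\|u\|_{\mathbb H}^2,
$$
it suffices to show $\sup_k \sigma_k W_t^k<\infty$ almost surely. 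This is precisely where Condition (B) enters: from $\sum_k\sigma_k^2<\infty$ one has $\sigma_k^2\to 0$, and the Gaussian tail bound $\mathbb{P}(\sigma_k|W_t^k|>1)\le 2\exp(-1/(2\sigma_k^2 t))$ together with the Borel--Cantelli lemma gives $\sigma_k|W_t^k|\le 1$ for all large $k$, almost surely.

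Next I would identify the equation solved by $M_t$. Working coordinatewise, for each $k$ the scalar process $\langle M_t u,\phi_k\rangle=e^{\sigma_k W_t^k}\langle u,\phi_k\rangle$ obeys the ordinary (Stratonovich) chain rule $d\langle M_t u,\phi_k\rangle=\sigma_k\langle M_t u,\phi_k\rangle\circ dW_t^k$, which is exactly the $\phi_k$-component of $B(M_t u)\circ dW_t=\sum_k\sigma_k\langle M_t u,\phi_k\rangle\,\phi_k\circ dW_t^k$. Hence $dM_t=BM_t\circ dW_t$ with $M_0=I$.

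I would then apply the (Stratonovich) product rule to $\Phi(t)=T_t M_t$. Because $T_t$ is the deterministic analytic semigroup generated by $\mathcal L$, it contributes no bracket term and $dT_t=\mathcal L T_t\,dt$, so that
$$
d(T_t M_t)=\mathcal L T_t M_t\,dt+T_t\big(BM_t\circ dW_t\big).
$$
Using $T_t\phi_k=e^{\mu_k t}\phi_k$ and the self-adjointness of $T_t$, one checks $T_t B(M_t u)=B(T_t M_t u)$, i.e. the commutativity established in Proposition \ref{COMMUNI}, which moves $T_t$ inside $B$ and yields
$$
d(T_t M_t)=\mathcal L(T_t M_t)\,dt+B(T_t M_t)\circ dW_t,\qquad (T_t M_t)\big|_{t=0}=I.
$$
Thus $T_t M_t$ solves (\ref{phi}), and by uniqueness of the solution of the linear equation (\ref{phi}) we conclude $\Phi(t)=T_t e^{BW(t)}$, which is (\ref{PExpo}).

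The main obstacle I expect is not the algebra but the rigorous justification of the infinite-dimensional Stratonovich calculus for these operator-valued processes: legitimizing the term-by-term differentiation of the series defining $M_t$, verifying that the product rule for $T_t M_t$ carries no extra correction (the commutativity guarantees the cross term is benign), and controlling convergence of all series in the relevant operator topology uniformly on compact time intervals. This convergence bookkeeping, controlled by Condition (B) exactly as in the boundedness step above, is the technical heart of the argument; once commutativity is invoked the identification is immediate.
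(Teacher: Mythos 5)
Your proposal is correct in outline, but it takes a genuinely different route from the paper. The paper does not verify the SPDE and invoke uniqueness; instead it starts from the series representation of $\Phi$ already established in \cite[Theorem 1.2.3]{mo-zh-zh}, namely $\Phi(t,\omega)=T_t+\sum_{n=1}^{\infty}\Phi^n(t,\omega)$ with $\Phi^1(t)=\int_0^t T_{t-s_1}BT_{s_1}\circ dW(s_1)$ and $\Phi^n(t)=\int_0^t T_{t-s_1}B\Phi^{n-1}(s_1)\circ dW(s_1)$, computes each iterated integral in closed form using the commutativity of $B$ and $T_\cdot$ (Proposition \ref{COMMUNI}), obtaining $\Phi^n(t)\cdot=\frac{1}{n!}T_t\big(B(\cdot)W(t)\big)^n$, and then justifies exchanging the order of the double sum via the a.s.\ bound $\sum_{n,k}\frac{1}{n!}|\tilde W^k_t|^n|u^k|\le e^{\|\tilde W_t\|_{\mathbb H}}\|u\|_{\mathbb H}$, where $\tilde W_t=\sum_k\sigma_k W^k_t\phi_k$ and $\|\tilde W_t\|_{\mathbb H}<\infty$ a.s.\ by Condition (B). This buys the authors a proof that requires no fresh stochastic calculus at all: the only probabilistic input is a convergence/Fubini argument, with all the stochastic-analytic weight carried by the cited representation theorem. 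Your approach is conceptually more self-contained (guess, verify, uniqueness), but it relocates the technical burden exactly where you identified it: you must rigorously justify the operator-valued Stratonovich product rule for $T_tM_t$ (note $dT_t=\mathcal{L}T_t\,dt$ only makes classical sense for $t>0$ or on $D(\mathcal L)$, so the verification should really be carried out in the mild formulation $\Phi(t)=T_t+\int_0^tT_{t-s}B\Phi(s)\circ dW(s)$), and you must invoke a uniqueness theorem for (\ref{phi}) in precisely that mild class, which the paper never needs. Two smaller points: your Borel--Cantelli step is fine but can be tightened by noting $\exp(-1/(2\sigma_k^2t))\le 2t\sigma_k^2$, so the probabilities are summable by Condition (B) (the paper's alternative, $\sup_k\sigma_k|W^k_t|\le\|\tilde W_t\|_{\mathbb H}<\infty$ a.s., is even shorter); and your bound gives boundedness of $M_t$ for each fixed $t$, which suffices here but would need a small continuity argument if one wanted it simultaneously for all $t$.
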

\begin{proof}
According to \cite[Theorem 1.2.3]{mo-zh-zh}, generally $\Phi$ can be written in the following representation:
 \begin{eqnarray*}
 \Phi(t,\omega)=T_t+\sum_{n=1}^{\infty}\Phi^n(t,\omega),
 \end{eqnarray*}
 where
 $$\Phi^1(t,\omega)=\int_0^tT_{t-s_1}BT_{s_1}\circ dW(s_1),\ \ \ \ \ \ \ \ \ \ \ \ \ \ \ \ \ \ \ \ \ \ \ $$
 $$\Phi^n(t,\omega)=\int_0^tT_{t-s_1}B\Phi^{n-1}(s_1,\omega)\circ dW(s_1),\ \ \ n\geq 2.$$

 With the commutative property of $B$ and $T_{\cdot}$ from Proposition \ref{COMMUNI}, the expansion above can be further derived as
 $$ \Phi^1(t,\omega)\cdot=T_tB(\cdot) W(t)=T_t\sum_{k=1}^{\infty}\sigma_k W^k_t\langle \cdot,\phi_k\rangle\phi_k,\ \ \ \ \ \ \ \ \ \ \ \ \ \ \ \ \ \ \ \ \ \ \ \ \ \ \ $$
 $$\Phi^n(t,\omega)\cdot=\frac{1}{n!}T_t(B(\cdot)W(t))^n=T_t\sum_{k=1}^{\infty}\frac{1}{n!}\sigma^n_k (W^k_t)^n\langle \cdot,\phi_k\rangle\phi_k,\ \ \ n\geq 2.$$
So for any $u\in \mathbb H$, 
\begin{eqnarray}\label{sum}
\Phi^n(t,\omega)u=T_t\sum_{n=1}^\infty \sum_{k=1}^\infty\frac{1}{n!}\sigma^n_k (W^k_t)^n\langle u,\phi_k\rangle\phi_k.
\end{eqnarray}
Define
\begin{eqnarray}\label{bm}
\tilde W_t= \sum_k {\tilde W}_t^k \phi_k(x):= \sum_k \sigma _k W_t^k \phi_k(x),
\end{eqnarray}
and by Condition (B), it is easy to see that $||\tilde W_t||_{\mathbb H}<\infty$ $\mathbb P$-a.s. Therefore
\begin{eqnarray*}
\sum_{n=1}^\infty \sum_{k=1}^\infty||\frac{1}{n!}\sigma^n_k (W^k_t)^n\langle u,\phi_k\rangle\phi_k||_{\mathbb H}&=& \sum_{n=1}^\infty \sum_{k=1}^\infty\frac{1}{n!}|(\tilde W^k_t)^n u^k|\\
&\leq & \sum_{n=1}^\infty \frac{1}{n!}\Big (\sum_{k=1}^\infty|\tilde W^k_t|^{2n}\Big)^{1\over 2}\Big (\sum_{k=1}^\infty|u^k|^{2}\Big)^{1\over 2}\\
&\leq &
\sum_{n=1}^\infty \frac{1}{n!}||\tilde W_t||_{\mathbb H}^n ||u||_{\mathbb H}
\\
&=& e^{||\tilde W_t||_{\mathbb H}}||u||_{\mathbb H}\\
&<& \infty,\ \mathbb P-a.s. 
\end{eqnarray*}
Thus we can change the order of the sum in (\ref{sum}) to get 
\begin{equation*}
\Phi(t,\omega)\cdot=T_t\cdot+T_t\sum_{k=1}^\infty \sum_{n=1}^\infty\frac{1}{n!}\sigma^n_k (W^k_t)^n\langle \cdot,\phi_k\rangle\phi_k=T_te^{BW(t)}(\cdot)=T_t\sum_{k=1}^{\infty}e^{\sigma_k W^k_t}\langle \cdot,\phi_k\rangle\phi_k,
\end{equation*}  
  where $e^{BW(t)}\cdot=\sum_{k=1}^{\infty}e^{\sigma_k W^k_t}\langle \cdot,\phi_k\rangle\phi_k$ is a well-defined operator mapping from $\mathbb H$ to $\mathbb H$.  
\end{proof}

Note that since $\Phi $ is a linear perfect cocycle, it is not hard for us to check that  multiplicative ergodic theorem (MET) for infinite dimension.
 \begin{lem}[Exponential dichotomy]\label{ExponentialDichominy}
Suppose the order of the eigenvalues of the operator $\mathcal{L}$ as $ \cdots<\mu_{m+1}<0<\mu_{m}<\dots<\mu_1$, and $\mathbb H$ has a direct sum decomposition
$$\mathbb H= \cdots\oplus E_{m+1}\oplus E_{m}\oplus\cdots\oplus E_{1},$$
where $E_k:=span\{\phi_k\}$, $k=1,2,\cdots$. Assume Condition (B) holds. Let $P^k$ be the projection of $\mathbb H$ onto $E_k$ along $\oplus_{i\neq k}E_i$. Then if $u\in \mathbb H$ such that $P^ku\neq 0$ with a $k\leq m$, we have
\begin{eqnarray*}
\mu_k=\lim_{t\to -\infty}\frac{1}{t}\log\|\Phi(t,\omega)P^ku\|_{\mathbb{H}}\ \ \ \ \mathbb{P}-a.s.,
\end{eqnarray*} 
and if $u\in \mathbb H$ such that $P^ku\neq 0$ with a $k\geq m+1$, we have
 \begin{eqnarray*}
\mu_k=\lim_{t\to +\infty}\frac{1}{t}\log\|\Phi(t,\omega)P^ku\|_{\mathbb{H}}\ \ \ \ \mathbb{P}-a.s..
\end{eqnarray*} 
Moreover,
\begin{equation}\label{comuPhi}
\Phi(t,\theta_{\hat{s}}\omega)P^k=P^k\Phi(t,\theta_{\hat{s}}\omega)
\end{equation}
and we have the following estimate:
 \begin{eqnarray}\label{412}
\Bigg\{\begin{array}{l} \|\Phi(t,\theta_{\hat{s}}\omega)P^k\|\leq C_{\Lambda}(\omega)  e^{\frac{1}{2}\mu_{k}t}e^{\Lambda|\hat{s}|}\ \mbox{for all }t\leq 0,\ \hat{s}\in \mathbb{R}, \  \mathbb{P}-a.s.,\ \ \mbox{when}\ \ k\leq m,  \\
\\
\|\Phi(t,\theta_{\hat{s}}\omega)P^k\|\leq C_{\Lambda}(\omega)   e^{\frac{1}{2}\mu_{k}t}e^{\Lambda|\hat{s}|}\ \ \ \ \mbox{for all } t\geq 0, \  \hat{s}\in \mathbb{R},\ \mathbb{P}-a.s.,\ \ \mbox{when}\ \ \ k\geq m+1,
                                     \end{array}
\end{eqnarray} 
where $\Lambda$ is an arbitrary positive number and $C_{\Lambda}(\omega)$ is a positive random variable depending on $\Lambda$.
In particular, $\mathbb H$ can be decomposed as 
$$\mathbb H=E^{-}\oplus E^{+},$$
where $E^-=\cdots\oplus E_{m+2}\oplus E_{m+1}$ is generated by the eigenvectors with negative eigenvalues, while $E^+=E_m\oplus E_{m-1}\oplus \dots\oplus E_{1}$ by the eigenvectors with positive eigenvalues.

 Let $P^{\pm}: \mathbb H\to E^{\pm}$ be the projection onto $ E^{\pm}$ along $E^{\mp}$. Then
$$\Phi(t,\theta_{\hat{s}}\omega)P^{\pm}=P^{\pm}\Phi(t,\theta_{\hat{s}}\omega),$$
with the following estimate
 \begin{eqnarray*}
\Bigg\{\begin{array}{l} \|\Phi(t,\theta_{\hat{s}}\omega)P^+\|\leq C(\theta_{\hat{s}}\omega)  e^{\frac{1}{2}\mu_{m}t}\ \ \mbox{for all }t\leq 0,\ \hat{s}\in \mathbb{R}, \  \mathbb{P}-a.s.,\\
\\
\|\Phi(t,\theta_{\hat{s}}\omega)P^-\|\leq C(\theta_{\hat{s}}\omega)  e^{\frac{1}{2}\mu_{m+1}t} \  \mbox{for all } t\geq 0, \  \hat{s}\in \mathbb{R},\ \mathbb{P}-a.s.,
                                     \end{array}
\end{eqnarray*} 
where $C(\omega)$ is a tempered random variable from above, i.e., 
$$
\lim\limits _{\hat{s}\to \pm \infty}{1\over |\hat{s}|}\log^{+} |C(\theta _{\hat{s}}\omega)|= 0,\ \  \ \mathbb{P}-a.s..
$$
 \end{lem}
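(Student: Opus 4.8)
The plan is to reduce everything to the explicit diagonal form of $\Phi$ furnished by Proposition~\ref{zhao31}. Since each $\phi_k$ is an eigenfunction of $\mathcal L$, we have $T_t\phi_k=e^{\mu_k t}\phi_k$, so Proposition~\ref{zhao31} gives, for every $u\in\mathbb H$,
\[
\Phi(t,\omega)u=\sum_{k=1}^\infty e^{\mu_k t+\sigma_k W^k_t}\langle u,\phi_k\rangle\phi_k .
\]
In particular $\Phi(t,\omega)$ acts diagonally in the orthonormal basis $\{\phi_k\}$, each $E_k=\mathrm{span}\{\phi_k\}$ is invariant, and the commutation relations $\Phi P^k=P^k\Phi$ and $\Phi P^{\pm}=P^{\pm}\Phi$ are then immediate. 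Evaluating at the shifted point $\theta_{\hat s}\omega$, for which $W^k_t(\theta_{\hat s}\omega)=W^k_{\hat s+t}-W^k_{\hat s}$, I would record the exact one-dimensional operator norm (using $\|\phi_k\|_{\mathbb H}=1$)
\[
\|\Phi(t,\theta_{\hat s}\omega)P^k\|=e^{\mu_k t+\sigma_k(W^k_{\hat s+t}-W^k_{\hat s})},
\]
which is the single identity on which the whole lemma rests.

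For the Lyapunov exponents I would take logarithms of the previous display with $\hat s=0$, divide by $t$, obtaining $\tfrac1t\log\|\Phi(t,\omega)P^ku\|_{\mathbb H}=\mu_k+\sigma_k\tfrac{W^k_t}{t}+\tfrac1t\log|\langle u,\phi_k\rangle|$ whenever $P^ku\neq0$. Letting $t\to\pm\infty$ and invoking the strong law of large numbers for Brownian motion, $W^k_t/t\to0$ a.s., kills the last two terms and leaves $\mu_k$; the sign of $k$ relative to $m$ only dictates whether the relevant limit is taken at $+\infty$ or $-\infty$.

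The substantive step is the dichotomy estimate (\ref{412}). Here I would bound the Brownian increment sublinearly: for each $\varepsilon>0$ there is a random variable with $|W^k_r|\le\varepsilon|r|+C^k_\varepsilon(\omega)$ for all $r$, so that $\sigma_k|W^k_{\hat s+t}-W^k_{\hat s}|\le \varepsilon\sigma_k|t|+2\varepsilon\sigma_k|\hat s|+2\sigma_kC^k_\varepsilon(\omega)$. Choosing $\varepsilon$ small enough that $\varepsilon\sigma_k|t|$ is absorbed by the spare half-exponent $\tfrac12\mu_k t$ (recall $\mu_k>0,\ t\le0$ when $k\le m$, and $\mu_k<0,\ t\ge0$ when $k\ge m+1$, so in both cases $\tfrac12\mu_k t\le0$ decays linearly in $|t|$) and that $2\varepsilon\sigma_k|\hat s|\le\Lambda|\hat s|$ yields precisely the asserted form $C_\Lambda(\omega)e^{\frac12\mu_k t}e^{\Lambda|\hat s|}$. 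Since $\sum_k\sigma_k^2<\infty$ forces $\sigma_k$ bounded while $|\mu_k|\ge|\mu_{m+1}|$ for $k\ge m+1$, a single $\varepsilon$ can in fact be chosen uniformly in $k$. The genuine difficulty, and the place where Condition~(B) truly enters, is that $C_\Lambda(\omega)$ must likewise be uniform in $k$: the stable subspace $E^-$ is infinite dimensional, so (\ref{412}) for $k\ge m+1$ is needed for all $k$ simultaneously. I would therefore show $\sup_{k\ge m+1}\sigma_k C^k_\varepsilon(\omega)<\infty$ a.s. by combining the Gaussian tail bound from the reflection principle for the running maximum of each $W^k$ with $\sum_k\sigma_k^2<\infty$ (hence $\sigma_k\to0$) and the Borel--Cantelli lemma. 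This uniform control is the main obstacle.

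Finally I would assemble the $E^{\pm}$ statements. Because $\mu_k\to-\infty$, there are only finitely many positive eigenvalues, so $E^+$ is finite dimensional; the bound for $P^+=\sum_{k\le m}P^k$ follows by taking the finite maximum over $k\le m$ of the one-dimensional norms, with $\tfrac12\mu_m t$ the dominant rate for $t\le0$, and the constant $C(\theta_{\hat s}\omega)$ inherits temperedness from the finitely many Brownian factors. For $P^-$ the operator norm equals $\sup_{k\ge m+1}\|\Phi(t,\theta_{\hat s}\omega)P^k\|$; using $\mu_k\le\mu_{m+1}<0$ for $t\ge0$ together with the uniform estimate just established, this is dominated by $C(\theta_{\hat s}\omega)e^{\frac12\mu_{m+1}t}$. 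Temperedness of $C(\cdot)$, i.e. $\tfrac1{|\hat s|}\log^+|C(\theta_{\hat s}\omega)|\to0$ as $\hat s\to\pm\infty$, I would deduce once more from the sublinear (strong law) growth of the Brownian paths, now uniform in $k$ by the same Borel--Cantelli argument.
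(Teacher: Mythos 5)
Your proposal is correct, and it reaches the two genuinely hard points of the lemma by a different route than the paper. The shared part: both you and the paper start from the diagonal representation $\Phi(t,\omega)u=\sum_{k}e^{\mu_k t+\sigma_k W^k_t}\langle u,\phi_k\rangle\phi_k$ of Proposition \ref{zhao31}, from which the invariance of each $E_k$, the commutation relations, the one-dimensional norm identity $\|\Phi(t,\theta_{\hat s}\omega)P^k\|=e^{\mu_k t+\sigma_k(W^k_{\hat s+t}-W^k_{\hat s})}$, and the Lyapunov exponents (strong law of large numbers for $W^k_t/t$) all follow at once. The differences: (i) for the uniformity in $k$ of the estimate (\ref{412}), the paper dominates all modes simultaneously through the $\mathbb H$-valued Brownian motion $\tilde W_t=\sum_k\sigma_k W^k_t\phi_k$, using $\sigma_k|\theta_{\hat s}W^k_t|\le\|\tilde W_{t+\hat s}\|_{\mathbb H}+\|\tilde W_{\hat s}\|_{\mathbb H}$ and the a.s.\ sublinear growth of $r\mapsto\|\tilde W_r\|_{\mathbb H}$, whereas you keep the modes separate, bound $|W^k_r|\le\varepsilon|r|+C^k_\varepsilon(\omega)$, and make the constants uniform by Borel--Cantelli; your tail bound should be stated as exponential rather than Gaussian (over an infinite horizon $\sup_{r\ge0}(W_r-\varepsilon r)$ is exponentially distributed with rate $2\varepsilon$), but this is harmless since $\mathbb{P}(\sigma_k C^k_\varepsilon>M)\le 4e^{-2\varepsilon M/\sigma_k}\le 2\sigma_k^2/(\varepsilon M)^2$ is summable by Condition (B), which is exactly what your argument needs. (ii) For the temperedness of $C(\omega)$ on the infinite-dimensional stable side, the paper follows Caraballo et al., applying Kingman's subadditive ergodic theorem to $\log\|\Phi(\cdot,\omega)P^-\|^2$ together with a Burkholder--Davis--Gundy moment estimate on $D(\omega)=\log^+\sup_{\hat s,t\in[0,1]}\|\Phi(t,\theta_{\hat s}\omega)P^-\|^2$; your uniform sublinear bound instead yields directly $\log^+C(\theta_{\hat s}\omega)\le 2\varepsilon\,(\sup_k\sigma_k)\,|\hat s|+2\sup_k\sigma_k C^k_\varepsilon(\omega)$ for every sufficiently small $\varepsilon>0$, so the tempered limit is $0$ with no ergodic-theoretic input. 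What each approach buys: yours is more elementary and self-contained, and it disposes of the dichotomy bound and the temperedness by one and the same estimate; the paper's subadditive-ergodic argument, on the other hand, uses nothing about the coordinates $W^k$ beyond the cocycle property and an integrability bound, so it would survive for a non-diagonal linear cocycle, while your Borel--Cantelli step leans on the $W^k$ being independent with explicitly known tails.
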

  \begin{proof}
First, let us show the commutative property (\ref{comuPhi}). By using (\ref{T1}), (\ref{T2}), (\ref{COMMUNI2}) and (\ref{PExpo}) we have for any $u\in \mathbb H$,
\begin{eqnarray*}
P^k\Phi(t,\theta_{\hat{s}}\omega)(u)(x)&=&\int_{{\cal O}}K(t,y,x)P^k\big(\sum_{i=1}^{\infty}e^{\sigma_i \theta_{\hat{s}}W^i_t}\langle u,\phi_i\rangle\phi_i(y)\big)dy\\
&=&\int_{{\cal O}}e^{\mu_kt}\phi_k(y)\phi_k(x)e^{\sigma_k \theta_{\hat{s}}W^k_t}\langle u,\phi_k\rangle\phi_k(y)dy\\
&=&e^{\sigma_k \theta_{\hat{s}}W^k_t}\int_{{\cal O}}e^{\mu_kt}\phi_k(y)\phi_k(x)P^ku(y)dy\\
&=&\sum_{i=1}^{\infty}e^{\sigma_i \theta_{\hat{s}}W^i_t}\Big\langle \int_{{\cal O}}e^{\mu_kt}\phi_k(y)\phi_k(\cdot)P^ku(y)dy, \phi_i(\cdot)\Big\rangle\phi_i(x)\\
&=&\sum_{i=1}^{\infty}e^{\sigma_i \theta_{\hat{s}}W^i_t}\Big\langle (T_tP^ku)(\cdot), \phi_i(\cdot)\Big\rangle\phi_i(x)\\
&=&e^{B\theta_{\hat{s}}W(t)}(T_tP^ku)(x)\\
&=&T_te^{B\theta_{\hat{s}}W(t)}(P^ku)(x)\\
&=&\Phi(t,\theta_{\hat{s}}\omega)P^k(u)(x),
\end{eqnarray*}  
where the penultimate equality is due to the commutative property of $T_t$ and $e^{B\theta_{\hat{s}}W(t)}$. 
This implies that
$$P^k\Phi(t,\theta_{\hat{s}}\omega)=\Phi(t,\theta_{\hat{s}}\omega)P^k.$$
Moreover, when $k\geq m+1$ and $t\geq 0$, we have from the definition of $B$ and $W_t$ that,
 \begin{eqnarray}\label{LMphi}
 \Phi(t,\omega)P^k(u)(x)
 =e^{\mu_k t+\sigma_k W^k_t}\langle\phi_k(\cdot), u(\cdot)\rangle\phi_k(x)=e^{\mu_k t+\sigma_k W^k_t}u^k(x),
\end{eqnarray}  
where $u^k(x):=\langle\phi_k(\cdot), u(\cdot)\rangle\phi_k(x)$. It is not hard to deduce that for each $u\in {\mathbb{H}}$
\begin{eqnarray*}
\lim_{t\to \infty}\dfrac{1}{t}\log \|\Phi(t,\omega)P^k(u)\|_{\mathbb{H}}=\lim_{t\to \infty}\dfrac{\mu_k t+\sigma_k W^k_t}{t}+\lim_{t\to \infty}\dfrac{\log \|u\|_{\mathbb{H}}}{t}=\mu_k
\end{eqnarray*}
with probability $1$ by using strong law of large numbers for infinite-dimensional Brownian motion.
It then follows that
 \begin{eqnarray*}
 \|\Phi(t,\omega)P^k\|^2&=&\sup_{\|u\|_{\mathbb{H}}=1}\int_{{\cal O}}|\Phi(t,\omega)P^k(u)(x)|^2dx\\
 &= &\sup_{\|u\|_{\mathbb{H}}=1}e^{2\mu_k t+2\sigma_k   W_t^k} \|u^k\|_{\mathbb{H}}^2\\
  &=& e^{2\mu_k t+2\sigma_k   W_t^k} \|\phi_k\|_{\mathbb{H}}^2 \\
  &=&
  \|\Phi(t,\omega)P^k\|^2_2.
 \end{eqnarray*}
Then by Condition (B),  for arbitrary $\Lambda\in (0,|\mu_{m+1}|)$,
\begin{eqnarray*}
&&\sup_{k\geq m+1}\sup_{\hat{s}\in \mathbb{R}}\sup_{t\geq 0}\|\Phi(t,\theta_{\hat{s}}\omega)P^k\|^2e^{-\mu_k t-2\Lambda|\hat{s}|}\\
&=& \sup_{k\geq m+1}\sup_{\hat{s}\in \mathbb{R}}\sup_{t\geq 0}\exp\{\mu_k t+2\sigma_k   \theta_{\hat{s}}W_t^k-2\Lambda |\hat{s}|\}\\
&\leq& \sup_{k\geq m+1}\sup_{\hat{s}\in \mathbb{R}}\sup_{t\geq 0}\exp\{\mu_k t+2(\sum_k \sigma_k^2|\theta_{\hat{s}}W_t^k|^2)^{1\over 2}-2\Lambda |\hat{s}|\}\\
&=& \sup_{k\geq m+1}\sup_{\hat{s}\in \mathbb{R}}\sup_{t\geq 0}\exp\{\mu_k t+2||\sum_k \sigma_k\theta_{\hat{s}}W_t^k\phi_k(x)||_{\mathbb H}  -2\Lambda |\hat{s}|\}\\
&\leq & \sup_{k\geq m+1}\sup_{\hat{s}\in \mathbb{R}}\sup_{t\geq 0}\exp\{\mu_k t+2  (\| \tilde W_{t+\hat{s}}\|_{\mathbb{H}}+\| \tilde W_{\hat{s}}\|_{\mathbb{H}})-2\Lambda |\hat{s}|\}\\
&\leq & \sup_{k\geq m+1}\sup_{\hat{s}\in \mathbb{R}}\sup_{t\geq 0}\exp\{\mu_k t+\Lambda|t+\hat{s}|+2  (\| \tilde W_{t+\hat{s}}\|_{\mathbb{H}}+\| \tilde W_{\hat{s}}\|_{\mathbb{H}})-\Lambda|t+\hat{s}|-2\Lambda |\hat{s}|\}\\
&\leq &  \sup_{k\geq m+1}\sup_{\hat{s}\in \mathbb{R}}\sup_{t\geq 0}\exp\{(\mu_k+\Lambda)t+2  (\| \tilde W_{t+\hat{s}}\|_{\mathbb{H}}+\| \tilde W_{\hat{s}}\|_{\mathbb{H}})-\Lambda( |t+\hat{s}|+|\hat{s}|)\}\\
&\leq & \sup_{\hat{s}\in \mathbb{R}}\sup_{t\geq 0}\exp\{2  (\| \tilde W_{t+\hat{s}}\|_{\mathbb{H}}+\| \tilde W_{\hat{s}}\|_{\mathbb{H}})-\Lambda( |t+\hat{s}|+|\hat{s}|)\}\\
&\leq & \sup_{\hat{s}\in \mathbb{R} } e^{2 \| \tilde W_{\hat{s}}\|_{\mathbb{H}}-\Lambda|\hat{s}|}\sup_{\hat{s}+t \in \mathbb{R}} e^{2 \| \tilde W_{t+\hat{s}}\|_{\mathbb{H}}-\Lambda|t+\hat{s}|}\\
&<&\infty,\ \ \mathbb{P}-a.s.,
\end{eqnarray*}
where $\tilde W_t$ is defined in (\ref {bm}).
Similar estimation can be applied to the case when $k\leq m$ with $t\leq 0$. Then the boundedness (\ref{412}) can be drawn from those estimations. 

The definition of $E^+$ and $E^-$ naturally permits the property $\Phi(t,\theta_{\hat{s}}\omega)P^{\pm}=P^{\pm}\Phi(t,\theta_{\hat{s}}\omega)$ from (\ref{comuPhi}). Besides we know from equation (\ref{LMphi}) that
\begin{eqnarray*}
 \Phi(t,\omega)P^-(u)(x)=\sum_{k=m+1}^{\infty}\Phi(t,\omega)P^k(u)(x)=\sum_{k=m+1}^{\infty}e^{\mu_k t+\sigma_k W^k_t}u^k(x),
\end{eqnarray*}
and
$$\Phi(t,\omega)P^+(u)(x)=\sum_{k=1}^{m}e^{\mu_k t+\sigma_k W^k_t}u^k(x).$$
Thus we are able to define a new random variable
$$C(\omega):=\sqrt{C^2_1(\omega)+C^2_2(\omega)},$$
where 
\begin{eqnarray}
C_1^2(\omega):=\sup_{t\in[0,\infty)}\dfrac{\|\Phi(t,\omega)P^-\|^2}{e^{\mu_{m+1}t}}=\sup_{\|u\|_{\mathbb{H}}=1}\sup_{t\in[0,\infty)}\sum_{k=m+1}^{\infty}e^{(2\mu_k-\mu_{m+1})t+2\sigma_k   W_t^k} \|u^k\|_{\mathbb{H}}^2 \label{NJ1} \\
C_2^2(\omega):=\sup_{t\in(-\infty,0]}\frac{ \|\Phi(t,\omega)P^+\|^2}{e^{\mu_m t}}=\sup_{\|u\|_{\mathbb{H}}=1}\sup_{t\in(-\infty,0]}\sum_{k=1}^{m}e^{(2\mu_k-\mu_m) t+2\sigma_k   W_t^k} \|u^k\|_{\mathbb{H}}^2.\label{NJ1copy}
\end{eqnarray}
It remains to show that $C(\omega)$ is a tempered random variable.\\
Indeed the fact that $C^2_2(\omega)$ is tempered from above can be easly deduced by the argument in \cite{feng-wu-zhao}. Regarding the temperedness of $C_1(\omega)$, the proof mainly follows from the argument of \cite[Theorem 3.4]{car}. To this end, we apply the Kingman's subbadditive ergodic Theorem  (\cite[Theorem 3.3.2]{ar}) to $\log\|\Phi(t,\omega)P^{-}\|^2$. Thus  for every $\varepsilon>0$, there is a finite-valued random variable $C_{\varepsilon}$ such that when $n_1>n_2$,
 \begin{equation}\label{KSET2}
 \log\|\Phi(n_1-n_2,\theta_{n_2}\omega)P^{-}\|^2\leq 2(n_1-n_2)\mu_{m+1}+n\varepsilon+C_{\varepsilon}(\omega),\ \ \mathbb{P}-a.s.
 \end{equation}
Now it is sufficient to verify that 
$$\lim_{\hat{s}\to \infty}C^2_1(\theta_{\hat{s}}\omega)e^{-\bar{\Lambda} \hat{s}}=0,$$
for some sufficiently large $\bar{\Lambda}$. But this follows from 
\begin{eqnarray*}
\|\Phi(t,\theta_{\hat{\hat{s}}}\omega)P^{-}\|^2 &\leq &\|\Phi(1+t-[t]-1-[\hat{s}]+\hat{s},\theta_{[\hat{s}]+[t]}\omega)P^{-}\|^2\\
&&\times\|\Phi([t]-1,\theta_{[\hat{s}]+1}\omega)P^{-}\|^2\times \|\Phi(1-\hat{s}+[\hat{s}],\theta_{\hat{s}}\omega)P^{-}\|^2\\
&\leq &e^{D(\theta_{[\hat{s}]+[t]+1}\omega)}e^{D(\theta_{[\hat{s}]+[t]}\omega)}e^{2([t]-1)\mu_{m+1}+([t]+[\hat{s}])\varepsilon+C_{\varepsilon}(\omega)}e^{D(\theta_{[\hat{s}]}\omega)}
\end{eqnarray*} 
for $t>1$, where
$D(\omega):=\log^{+}\sup_{\hat{s},t\in [0,1]}\|\Phi(t,\theta_{\hat{s}}\omega)P^{-}\|^2$, 
and by Jensen's inequality and the Burkholder-Davis-Gundy inequality,
\begin{eqnarray*}
\mathbb{E}D&\leq &\log^+ \mathbb{E}\sup_{\hat{s},t\in [0,1]}\|\Phi(t,\theta_{\hat{s}}\omega)P^{-}\|^2\\
&=& \log^+ \mathbb{E}\sup_{\hat{s},t\in [0,1]} \sup_{\|u\|_\mathbb{H}=1}\sum_{j=m+1}^{\infty}e^{2\mu_jt+2\sigma_j \theta_{\hat{s}}W_t^j}\|u^j\|_{\mathbb{H}}^2\\
&\leq &  \log^+  \sup_{\|u\|_\mathbb{H}=1} \sum_{j=m+1}^{\infty}e^{3\sigma^2_j } \sqrt{\mathbb{E}\Big(\sup_{t\in [0,2]} e^{2\sigma_j W_{t}^j-\sigma^2_j  t}\Big)^2\mathbb{E}\Big(\sup_{\hat{s}\in [0,1]} e^{-2\sigma_j W_{\hat{s}}^j-\sigma^2_j  \hat{s}}\Big)^2}\|u^j\|_{\mathbb{H}}^2\\
&\leq &  \log^+ C_2 \sup_{\|u\|_{\mathbb{H}}=1}\sum_{j=m+1}^{\infty} e^{9\sigma^2_j } \|u^j\|_{\mathbb{H}}^2\\
&<&\infty,
\end{eqnarray*} 
here $C_2$ is a positive constant from the Burkholder-Davis-Gundy inequality. 
 \end{proof}
  \begin{rmk}
  From the proof of Lemma \ref{ExponentialDichominy}, it is easy to see that  
 \begin{eqnarray}\label{PPk1}
\Phi(t,\omega)P^k(u)(x)
&=& \langle  \phi_k(\cdot), \Phi(t,\omega)u(\cdot)\rangle \phi_k(x),
 \end{eqnarray}
and
 \begin{eqnarray}\label{PPk2}
 P^k(u)(x)=\langle u(\cdot),\phi_k(\cdot) \rangle \phi_k(x)=u^k(x).
 \end{eqnarray}
 \end{rmk}
Some elementary but useful estimates can be obtained from (\ref{412}) and  the proof is deferred to Appendix.
\begin{cor}\label{LEMMA2C4}Under the condition of Lemma \ref{ExponentialDichominy}, we have the following estimation that for an arbitrary positive $\Lambda$
\begin{equation}\label{LEMMA2C41}
\|T_tP^k-P^k\|^2\leq  |\mu_k||t|,
\end{equation}
\begin{equation}\label{LEMMA2C42}
\mathbb{E}\|\Phi_{t}P^k-T_tP^k\|^2
\leq  C\max\big\{1,e^{2\mu_kt+2\sigma^2_k |t|}\big\}\sigma_k^2 (|t|+|t|^2),
\end{equation}
for all $t\geq 0$, if $k\geq m+1$; and for all $t<0$, if $k<m$. $C$ is a generic constant.
\end{cor}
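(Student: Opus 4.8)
The plan is to exploit the diagonal structure already recorded in the remark preceding the corollary: by (\ref{PPk1}), (\ref{PPk2}) and (\ref{LMphi}), the three operators $P^k$, $T_tP^k$ and $\Phi(t,\omega)P^k$ all act on $\mathbb H$ as scalar multiples of the rank-one map $u\mapsto u^k=\langle u,\phi_k\rangle\phi_k$, namely $P^ku=u^k$, $T_tP^ku=e^{\mu_kt}u^k$ and $\Phi(t,\omega)P^ku=e^{\mu_kt+\sigma_kW^k_t}u^k$. Since $\phi_k$ is a unit vector, $\|u^k\|_{\mathbb H}=|\langle u,\phi_k\rangle|$ and hence $\sup_{\|u\|_{\mathbb H}=1}\|u^k\|_{\mathbb H}=1$, attained at $u=\phi_k$. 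Consequently each operator norm collapses to the absolute value of the corresponding scalar factor, which reduces both estimates to elementary one-dimensional inequalities together with one Gaussian moment computation, removing any genuinely infinite-dimensional difficulty.

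For (\ref{LEMMA2C41}) I would write $(T_tP^k-P^k)u=(e^{\mu_kt}-1)u^k$, so that $\|T_tP^k-P^k\|^2=(e^{\mu_kt}-1)^2$. In both admissible regimes the exponent is non-positive: $\mu_k<0$ with $t\ge 0$ when $k\ge m+1$, and $\mu_k>0$ with $t<0$ when $k<m$, so $x:=\mu_kt\le 0$. Writing $y=-x\ge 0$, the claim reduces to the elementary inequality $(1-e^{-y})^2\le y$, which follows by bounding one factor of $1-e^{-y}$ by $1$ and the other by $y$ (using $1-e^{-y}\le 1$ and $1-e^{-y}\le y$). This gives $\|T_tP^k-P^k\|^2\le|x|=|\mu_k|\,|t|$.

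For (\ref{LEMMA2C42}) the explicit forms give $(\Phi(t,\omega)P^k-T_tP^k)u=e^{\mu_kt}(e^{\sigma_kW^k_t}-1)u^k$, hence $\|\Phi(t,\omega)P^k-T_tP^k\|^2=e^{2\mu_kt}(e^{\sigma_kW^k_t}-1)^2$. I would control the random factor by the deterministic bound $(e^a-1)^2\le a^2e^{2|a|}$ with $a=\sigma_kW^k_t$, and then split $e^{2\sigma_k|W^k_t|}\le e^{2\sigma_kW^k_t}+e^{-2\sigma_kW^k_t}$. Using that $W^k_t$ is centred Gaussian with variance $|t|$ together with the symmetry of $W^k_t$, the two resulting expectations coincide, and a standard exponential change of measure (under which $W^k_t$ becomes Gaussian with mean $2\sigma_k|t|$ and variance $|t|$) yields
\begin{equation*}
\mathbb E\big[(W^k_t)^2e^{2\sigma_kW^k_t}\big]=\big(|t|+4\sigma_k^2|t|^2\big)e^{2\sigma_k^2|t|}.
\end{equation*}
Collecting the factors gives
\begin{equation*}
\mathbb E\|\Phi(t,\omega)P^k-T_tP^k\|^2\le 2\sigma_k^2\,e^{2\mu_kt+2\sigma_k^2|t|}\big(|t|+4\sigma_k^2|t|^2\big).
\end{equation*}
Since Condition (B) (i.e. (\ref{SGM})) forces $\sup_k\sigma_k^2<\infty$, the coefficient $4\sigma_k^2$ is absorbed into a generic constant $C$, and $e^{2\mu_kt+2\sigma_k^2|t|}\le\max\{1,e^{2\mu_kt+2\sigma_k^2|t|}\}$, which produces exactly (\ref{LEMMA2C42}).

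The computations are routine once the diagonal structure is used, so there is no serious obstacle. The points that require care are the correct bookkeeping of the variance $|t|$ of the two-sided Brownian motion $W^k_t$ for both signs of $t$, the verification that the supremum defining the operator norm is genuinely attained (so that the norm equals the scalar factor rather than merely being bounded by it), and the Gaussian moment identity, where the $4\sigma_k^2|t|^2$ contribution must be retained as it is precisely what generates the $|t|^2$ term in the stated bound.
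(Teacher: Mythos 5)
Your proposal is correct, and for the first estimate (\ref{LEMMA2C41}) it coincides with the paper's own argument: both reduce $\|T_tP^k-P^k\|^2$ to the scalar quantity $(1-e^{\mu_kt})^2$ and bound it by $|\mu_k||t|$ using $1-e^{-y}\le\min\{1,y\}$. For the second estimate (\ref{LEMMA2C42}), however, you take a genuinely different route. The paper goes back to the SPDE satisfied by $\Phi_tP^k$, writes $\Phi_tP^k-T_tP^k=\int_0^tT_{t-s}B(\Phi_sP^k)\circ dW^k_s$, converts the Stratonovich integral to It\^o form, and then estimates the It\^o isometry term and the drift-correction term separately, using the moment bound $\mathbb{E}|\Phi_hP^k(\phi_k)(x)|^2\le e^{2\mu_kh+2\sigma_k^2h}|\phi_k(x)|^2$; this is what produces the $|t|+|t|^2$ structure there. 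You instead exploit the closed-form diagonal representation $\Phi(t,\omega)P^k=e^{\mu_kt+\sigma_kW^k_t}P^k$ (from Proposition \ref{zhao31} and (\ref{LMphi})), so that $\mathbb{E}\|\Phi_tP^k-T_tP^k\|^2=e^{2\mu_kt}\,\mathbb{E}\big[(e^{\sigma_kW^k_t}-1)^2\big]$, and then use the pointwise bound $(e^a-1)^2\le a^2e^{2|a|}$ together with the Gaussian identity $\mathbb{E}[Z^2e^{\lambda Z}]=(|t|+\lambda^2|t|^2)e^{\lambda^2|t|/2}$ for $Z\sim N(0,|t|)$; your bookkeeping (symmetry of $W^k_t$, absorption of $4\sigma_k^2$ into $C$ via Condition (B)) is accurate, and the resulting bound $2\sigma_k^2e^{2\mu_kt+2\sigma_k^2|t|}(|t|+4\sigma_k^2|t|^2)$ implies the stated one. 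Your route is more elementary — it needs no stochastic integration at all, which is in keeping with the paper's own philosophy of working with the explicit evolution operator rather than Skorokhod/It\^o integrals — and it gives explicit constants and retains the factor $e^{2\mu_kt}$ inside the maximum, matching the statement as printed (the paper's final display keeps only $e^{2\sigma_k^2|t|}$ there). What the paper's stochastic-integral argument buys is robustness: it would survive in situations where no closed-form expression for $\Phi(t)P^k$ is available, whereas your argument is tied to the diagonal structure of this particular model.
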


For any $N\in \mathbb{N}$, we set the truncation of $\Phi(t,\theta_{\hat{s}}\omega)P^{k}$ by $N$ according to the boundedness of $\Phi$ (\ref{412}) as follows: when $k\geq m+1$, define
\begin{equation}\label{PHI-B1}
 \Phi^N(t,\theta_{\hat{s}}\omega)P^{k}=\Phi(t,\theta_{\hat{s}}\omega)P^{k}\min\left\{1,  \frac{Ne^{\frac{1}{2}\mu_{k}t}e^{\Lambda|\hat{s}|}}{\| \Phi(t,\theta_{\hat{s}}\omega)P^{k}\|}\right\}, \mbox{\ for\ } t\geq 0,
  \end{equation}
and when $k\leq m$, define
  \begin{equation}\label{PHI+B1}
 \Phi^N(t,\theta_{\hat{s}}\omega)P^{k}=\Phi(t,\theta_{\hat{s}}\omega)P^{k}\min\left\{1,  \frac{Ne^{\frac{1}{2}\mu_{k}t}e^{\Lambda|\hat{s}|}}{\| \Phi(t,\theta_{\hat{s}}\omega)P^{k}\|}\right\}, \mbox{\ for\ } t\leq 0.
  \end{equation}
Thus we have the boundedness, i.e.,
\begin{equation*}
\| \Phi^N(t,\theta_{\hat{s}}\omega)P^{k}\|\leq N e^{\frac{1}{2}\mu_{k}t}e^{\Lambda|\hat{s}|}
\end{equation*}
for any pair of $k,t$ where $k\leq m$ and $t\geq 0$ or $k\geq m$ and $t\leq 0$.

The aim is to look for a $\mathcal{B}(\mathbb{R})\otimes \mathcal{F}\otimes \mathcal{B}({\cal O})$-measurable map $Y: \mathbb{R}\times \Omega\rightarrow \mathbb H$ which satisfies the following coupled forward-backward infinite horizon random integral equation (IHRIE) 
\begin{align}
\label{V}
\begin{split}
&Y(t, \omega)\\ 
=&\int_{-\infty}^t\Phi(t-{\hat{s}},\theta_{\hat{s}} \omega)P^-F({\hat{s}},Y({\hat{s}},\omega))d{\hat{s}}-\int^{+\infty}_t\Phi(t-{\hat{s}},\theta_{\hat{s}} \omega)P^+F({\hat{s}},Y({\hat{s}},\omega))d{\hat{s}} \\
=&\int_{-\infty}^t\sum_{k=m+1}^{\infty}\Phi(t-{\hat{s}},\theta_{\hat{s}} \omega)P^kF({\hat{s}},Y({\hat{s}},\omega))d{\hat{s}} -\int^{+\infty}_t\sum_{k=1}^{m}\Phi(t-{\hat{s}},\theta_{\hat{s}} \omega)P^kF({\hat{s}},Y({\hat{s}},\omega))d{\hat{s}}, \\
\end{split}
\end{align}
for all $\omega \in \Omega$, $t\in \mathbb{R}$. The function $Y(t, \omega)$ is a measurable function in the function space $\mathbb H$ of variable $x$, which is $Y(t, \omega)(x)$ at $x\in {\cal O}$, or written as $Y(t,\omega,x)$. Consider a sequence $\{Y^N\}_{N\geq 1}$ where $Y^N$ satisfies
\begin{align}\label{VN}
\begin{split}
 Y^N(t, \omega)=&\int_{-\infty}^t\sum_{k=m+1}^{\infty}\Phi^N(t-{\hat{s}},\theta_{\hat{s}} \omega)P^kF({\hat{s}},Y^N({\hat{s}},\omega))d{\hat{s}} \\
& -\int^{+\infty}_t\sum_{k=1}^{m}\Phi^N(t-{\hat{s}},\theta_{\hat{s}} \omega)P^kF({\hat{s}},Y^N({\hat{s}},\omega))d{\hat{s}}, 
\end{split}
\end{align}
for all $\omega \in \Omega$, $t\in \mathbb{R}$. The idea is to solve $Y^N$ on a sequence of subsets $\Omega^N$ of $\Omega$ such that $\mathbb{P}(\Omega^N)\to 1$ as $N\to \infty$. This enables us to approximate $Y$ with $Y^N$. All the arguments are deffered to Section \ref{sec:4}.

\section{The existence of random periodic solutions and periodic measures}\label{sec:4}
This section is devoted to show the main result, Theorem \ref{Main}. Section \ref{sec:4.1} gives an equivalent result with which random periodic solution can be identified as a solution to the corresponding IHRIE. Section \ref{sec:4.2} thus focuses on the existence of IHRIE, by firstly addressing the existence of the truncated version of IHRIE under generalized Schauder's fixed point argument \cite[Theorem 2.3]{feng-zhao-zhou}, and then showing convergence to the original IHRIE through localization argument. 

\subsection{Equivalence and other relevant results} \label{sec:4.1}
\begin{thm}\label{THM1C4M} Assume Condition (L), Condition (B) and Condition (P). If Cauchy problem (\ref{NJ21}) has a unique solution $u(t,s,\omega,x)$ and the coupled forward-backward infinite horizon stochastic integral equation (\ref{V}) has one solution $Y:\mathbb{R}\times \Omega\to L^2(\mathcal{O})$ such that $Y(t+\tau,\omega)=Y(t,\theta_{\tau} \omega) \ {\rm for \ any} \
t\in \mathbb{R}$ a.s., then $Y$ is a random periodic solution to (\ref{NJ21}), i.e.,
\begin{equation}\label{eq:solution}
u\big(t+\tau, t, Y(t,\omega), \omega\big)=Y(t+\tau,\omega)=Y(t,\theta_{\tau}\omega)\ \ \mbox{ for any }t\in \mathbb{R}\ \ a.s.
\end{equation}
Conversely, if Eqn. (\ref{NJ21}) has a random periodic solution $Y:\mathbb{R}\times \Omega\to L^2(\mathcal{O})$ of period $\tau$ which is tempered from above for each $t$, then $Y$ is a solution of the coupled forward-backward infinite horizon random integral equation (\ref{V}).   
\end{thm}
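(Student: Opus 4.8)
The plan is to route both implications through the single ``finite-horizon'' mild identity
\[
Y(t,\omega)=\Phi(t-s,\theta_s\omega)Y(s,\omega)+\int_s^t\Phi(t-\hat s,\theta_{\hat s}\omega)F(\hat s,Y(\hat s,\omega))\,d\hat s,\qquad t\ge s,
\]
which is precisely the assertion that $Y(\cdot,\omega)$ is the mild solution of \eqref{NJ21} issued from $Y(s,\omega)$ at time $s$. Throughout I would exploit three structural facts about the explicit evolution operator \eqref{PExpo}: the cocycle relation $\Phi(t-s,\theta_s\omega)\Phi(s-\hat s,\theta_{\hat s}\omega)=\Phi(t-\hat s,\theta_{\hat s}\omega)$ (immediate from \eqref{PExpo} together with $\theta_a\theta_b=\theta_{a+b}$), the commutation $\Phi(t,\theta_{\hat s}\omega)P^\pm=P^\pm\Phi(t,\theta_{\hat s}\omega)$ from Lemma \ref{ExponentialDichominy}, and the identity $P^-+P^+=I$.

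For the forward implication I would assume $Y$ solves \eqref{V} and derive the displayed mild identity. Applying $\Phi(t-s,\theta_s\omega)$ to \eqref{V} evaluated at $s$, the cocycle relation collapses each kernel $\Phi(t-s,\theta_s\omega)\Phi(s-\hat s,\theta_{\hat s}\omega)$ to $\Phi(t-\hat s,\theta_{\hat s}\omega)$, leaving a $P^-$ integral over $(-\infty,s]$ and a $P^+$ integral over $[s,+\infty)$. Splitting $F=P^-F+P^+F$ in the finite integral $\int_s^t$ and recombining the ranges merges the $P^-$ pieces into $\int_{-\infty}^t$ and the $P^+$ pieces into $-\int_t^{+\infty}$, which is exactly $Y(t,\omega)$ by \eqref{V}. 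Hence $Y$ satisfies the mild identity, and uniqueness of the solution to \eqref{NJ21} forces $u(t,s,Y(s,\omega),\omega)=Y(t,\omega)$ for all $t\ge s$; evaluating at $s=t$ and time $t+\tau$ and invoking the hypothesis $Y(t+\tau,\omega)=Y(t,\theta_\tau\omega)$ yields \eqref{eq:solution}.

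For the converse I would start from the random periodic solution, which by Definition \ref{RPSS} already satisfies the displayed mild identity; the task is to recover \eqref{V} by sending the endpoints to $\pm\infty$. Applying $P^-$ and letting $s\to-\infty$, the boundary term $\Phi(t-s,\theta_s\omega)P^-Y(s,\omega)$ is controlled, via the dichotomy estimate of Lemma \ref{ExponentialDichominy}, by $C(\theta_s\omega)e^{\frac12\mu_{m+1}(t-s)}\|Y(s,\omega)\|$; since $\mu_{m+1}<0$ the exponential decays while temperedness of $C$ and of $Y$ keeps the remaining factors subexponential, so the term vanishes and $P^-Y(t,\omega)=\int_{-\infty}^t\Phi(t-\hat s,\theta_{\hat s}\omega)P^-F\,d\hat s$. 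For the $P^+$ component I would instead invert $\Phi$ on the finite-dimensional space $E^+$ (where it acts diagonally and invertibly, with inverse $\Phi(s-t,\theta_t\omega)|_{E^+}$ by the cocycle), rewrite $P^+Y(s,\omega)=\Phi(s-t,\theta_t\omega)P^+Y(t,\omega)-\int_s^t\Phi(s-\hat s,\theta_{\hat s}\omega)P^+F\,d\hat s$, and let $t\to+\infty$; here $\mu_m>0$ together with Lemma \ref{ExponentialDichominy} gives $\|\Phi(s-t,\theta_t\omega)P^+\|\le C(\theta_t\omega)e^{\frac12\mu_m(s-t)}\to0$, so the boundary term again disappears and $P^+Y(t,\omega)=-\int_t^{+\infty}\Phi(t-\hat s,\theta_{\hat s}\omega)P^+F\,d\hat s$. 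Adding $P^-Y+P^+Y=Y$ reproduces \eqref{V}.

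I expect the only genuine difficulty to be the vanishing of the two boundary terms in the converse: this is where the exponential dichotomy rates $\tfrac12\mu_{m+1}$ and $\tfrac12\mu_m$ must dominate the subexponential growth permitted by temperedness of both $Y$ and the random constant $C(\theta_{\hat s}\omega)$, and where one simultaneously reads off convergence of the improper integrals in \eqref{V} --- the finite integrals $\int_s^t$ converge as the endpoints recede precisely because the boundary terms tend to zero while the left-hand sides stay finite. The algebraic steps (cocycle collapsing, projection splitting, recombination of integration ranges) are routine once the commutation and cocycle identities are in hand.
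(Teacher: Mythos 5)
Your proposal is correct and follows essentially the same route as the paper, whose proof is simply a citation of \cite[Theorem 2.1]{feng-zhao}: the forward direction there likewise collapses the kernels via the cocycle property and invokes uniqueness of the mild solution, and the converse applies $P^{\pm}$, inverts the flow on the finite-dimensional unstable space, and kills the boundary terms by the dichotomy rates $\tfrac12\mu_{m+1}<0<\tfrac12\mu_m$ against the temperedness of $C(\theta_{\hat s}\omega)$ and of $Y$. Your sketch in fact supplies the details the paper leaves to the reference, with no gaps of substance.
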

\begin{proof}
Similarly to \cite[Theorem 2.1]{feng-zhao}.  
\end{proof}

We will adopt the following generalized Schauder's fixed point theorem to support Theorem \ref{THMC4M}. The proof was refined from the proof of Schauder's fixed point theorem and was given in \cite[Theorem 2.3]{feng-zhao-zhou}. 
\begin{thm}[Generalized Schauder's fixed point theorem]\label{Schauder}
Let $H$ be a Banach space, S be a convex subset of $H$. Assume a map $T: H\to H$ is continuous and $T(S)\subset S$
is relatively compact in $H$. Then $T$ has a fixed point in $H$.
\end{thm}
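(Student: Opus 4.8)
The plan is to reduce the statement to the finite-dimensional Brouwer fixed point theorem by approximating $T$ with maps of finite-dimensional range, exploiting the relative compactness of $T(S)$. First I would set $C:=\overline{T(S)}$, which is compact by hypothesis and hence totally bounded. The starting observation is therefore that for every $\varepsilon>0$ there is a finite $\varepsilon$-net $y_1,\dots,y_{n}\in T(S)$ whose $\varepsilon$-balls cover $C$.

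Next I would build the Schauder projection associated with this net. Setting $\lambda_i(y):=\max\{0,\varepsilon-\|y-y_i\|_H\}$, the map
$$
P_\varepsilon(y):=\frac{\sum_{i=1}^{n}\lambda_i(y)\,y_i}{\sum_{i=1}^{n}\lambda_i(y)},\qquad y\in C,
$$
is continuous, takes values in $V_\varepsilon:=\mathrm{conv}\{y_1,\dots,y_n\}$, and satisfies $\|P_\varepsilon(y)-y\|_H\le\varepsilon$ for all $y\in C$ (only the $y_i$ within distance $\varepsilon$ contribute to the average). Since each $y_i\in T(S)\subset S$ and $S$ is convex, the polytope $V_\varepsilon$ lies inside $S$; consequently the composition $T_\varepsilon:=P_\varepsilon\circ T$ maps $V_\varepsilon$ continuously into $V_\varepsilon$ (indeed $T(V_\varepsilon)\subset T(S)\subset C$, and then $P_\varepsilon$ sends $C$ into $V_\varepsilon$). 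As $V_\varepsilon$ is a compact convex subset of the finite-dimensional space $\mathrm{span}\{y_1,\dots,y_n\}$, Brouwer's fixed point theorem furnishes $x_\varepsilon\in V_\varepsilon$ with $P_\varepsilon(T(x_\varepsilon))=x_\varepsilon$, whence
$$
\|x_\varepsilon-T(x_\varepsilon)\|_H=\|P_\varepsilon(T(x_\varepsilon))-T(x_\varepsilon)\|_H\le\varepsilon .
$$

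Finally I would pass to the limit. Choosing $\varepsilon=1/j$, $j\in\mathbb{N}$, produces a sequence $x_j$ with $\|x_j-T(x_j)\|_H\to0$ and $T(x_j)\in C$. Since $C$ is compact, a subsequence $T(x_{j_k})$ converges to some $z\in C\subset H$; then $x_{j_k}=\bigl(x_{j_k}-T(x_{j_k})\bigr)+T(x_{j_k})\to z$ as well, and the continuity of $T$ gives $T(x_{j_k})\to T(z)$. Comparing the two limits yields $T(z)=z$, the desired fixed point.

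The main obstacle I anticipate is the finite-dimensional reduction in the middle step: one must arrange the $\varepsilon$-net and the projection so that the \emph{self-map} property is preserved, i.e.\ so that $T_\varepsilon$ sends the finite polytope $V_\varepsilon$ back into itself, which is precisely where the convexity of $S$ (to guarantee $V_\varepsilon\subset S$) and the choice of net points inside $T(S)$ are both used; the relative compactness hypothesis then enters twice, once to produce the finite net and once to extract the convergent subsequence that delivers the actual fixed point. An alternative, slightly slicker packaging would replace the net/Brouwer bookkeeping by first forming $K:=\overline{\mathrm{conv}}\,T(S)$, compact by Mazur's theorem, observing that $T(K)\subset K$ (by continuity of $T$ on $H$ together with $T(\mathrm{conv}\,T(S))\subset T(S)\subset K$ and closedness of $K$), and then invoking the classical Schauder theorem on the compact convex set $K$; but since that classical theorem is itself proved by exactly the Brouwer approximation above, I would present the self-contained projection argument.
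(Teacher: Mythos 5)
Your proof is correct: the finite $\varepsilon$-net in $T(S)$, the Schauder projection $P_\varepsilon$ onto the polytope $V_\varepsilon\subset S$ (where convexity of $S$ is used), Brouwer's theorem, and the limiting step that uses continuity of $T$ on all of $H$ to handle the fact that the fixed point lies only in $\overline{T(S)}$, possibly outside $S$, constitute exactly the refinement of the classical Schauder argument that this statement rests on. The paper itself gives no proof but defers to \cite[Theorem 2.3]{feng-zhao-zhou}, whose proof proceeds in essentially this same way, so your argument matches the intended one with no gaps.
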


The generalized Schauder's fixed point theorem requires us to check the relative compactness. As discussed in \cite{feng-zhao}, a suitable choice for this concern might be applying Wiener-Sobolev compact embedding theorem to get the relatively compactness of a sequence in $C([0, T], L^2(\Omega\times {\cal O}))$ with Sobolev norm being bounded in $L^2(\Omega)$ and Malliavin derivative being bounded and equicontinuous in $L^2(\Omega\times {\cal O})$ uniformly in time. 

We denote by $C_p^\infty(\mathbb{R}^n)$ the set of infinitely differentiable functions $f:\mathbb{R}^n\to \mathbb{R}$ such that $f$ and all its partial derivatives have polynomial growth. Let ${\mathcal S}$ be the class of smooth random variables $F$ such that $F=f(W(h_1),\cdots, W(h_n))$ with $n\in N$,  $h_1,\cdots, h_n\in L^2([0,T])$ and $f\in C_p^\infty(\mathbb{R}^n)$, $W(h_i)=\int_0^T h_i(s)dW(s)$. The derivative operator of a smooth random variable $F$ is the stochastic process $\{{\cal D}_t F,\  t\in [0,T]\}$ defined by (c.f. \cite{nuallart})
$${\cal D}_t F=\sum_{i=1}^n {{\partial f}\over {\partial x_i}}(W(h_1),\cdots,W(h_n))h_i(t).$$
We will denote ${\cal D}^{1,2}$ the domain of ${\cal D}$ in $L^2(\Omega)$, i.e. ${\cal D}^{1,2}$ is the closure of ${\mathcal S}$ with respect to the norm
 $$||F||_{1,2}^2=\mathbb{E}|F|^2+\mathbb{E}||{\cal D}_tF||^2_{L^2([0,T])}.$$
 Denote $C([0,T],L^2(\Omega\times {\cal O}))$ the set of continuous functions $f(\cdot,\cdot, \omega)$ with the norm
 $$||f||^2=\sup_{t\in [0,T]} \int_{{\cal O}}\mathbb{E}|f(t,x)|^2dx<\infty.$$
 The relative compactness result below is from \cite[Theorem 2.3]{feng-zhao}, a refined version of relative compactness of Wiener-Sobolev space in Bally-Saussereau \cite{bally}. 
 \begin{thm}{\bf(Relative compactness in $C([0,T], L^2(\Omega\times {\cal O}))$)}\label{B-S2}
Let ${\cal O}$ be a bounded domain in $\mathbb{R}^d$. Consider a sequence $(v_n)_{n\in N}$ of $C([0,T],L^2(\Omega\times {\cal O}))$. Suppose the following conditions are satisfied:
\begin{enumerate}
\item $\sup_{n\in \mathbb{N}}\sup_{t\in [0,T]} \mathbb{E}||v_n(t, \cdot)||_{H^1({\cal O})}^2 <\infty$,
, where the Hilbert space $H^1(\mathcal{O})$ denotes the completion of $\{v\in C^1({\cal O}), \|v\|_{H^1(\mathcal{O})}<\infty\}$ with respect to the following norm:
 $$\|v\|_{H^1(\mathcal{O})}:=\left(\|v\|^2_{L^2({\cal O})}+\|Dv\|^2_{L^2({\cal O})}\right)^{\frac{1}{2}},$$
 where $Dv$ denotes the first-order derivative of $v$ with respect to the spatial variable.  
\item $\sup_{n\in \mathbb{N}}\sup_{t\in [0,T]} \int_{{\cal O}} ||v_n(t, x,\cdot)||_{1,2}^2 dx<\infty$.
\item  There exists a constant $C\geq 0$ such that for any $t, s\in [0,T]$, 
         $$\sup_n\int_{{\cal O}} \mathbb{E}|v_n(t,x)-v_n(s,x)|^2dx< C|t-s|.$$
         \item (4i) There exists a constant $C$ such that for any $h \in \mathbb{R}$, and any $t\in [0,T]$,   
  $$ \sup_n\int_{{\cal O}}\int_{\mathbb{R}}\mathbb{E}|{\cal D}_{\theta+h}v_n(t,x)-{\cal D}_\theta v_n(t,x)|^2 d\theta dx<C|h|,$$
  where ${\cal D}_\theta$ denotes the derivative in Malliavin sense.\\
 (4ii) For any $\epsilon>0$, there exist $-\infty<\alpha<\beta<+\infty$ such that
$$\sup_n \sup_{t\in[0,T]}\int_{{\cal O}}\int_{\mathbb{R}\backslash (\alpha,\beta)} \mathbb{E}|{\cal D}_\theta v_n(t,x)|^2 d\theta dx <\epsilon.$$
\end{enumerate}
Then $\{v_n,\ n\in \mathbb{N}\}$ is relatively compact in $C([0,T],L^2(\Omega\times {\cal O}))$.
 \end{thm}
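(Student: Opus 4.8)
The plan is to exploit the three kinds of control built into the hypotheses---in the spatial variable $x\in{\cal O}$, in the probabilistic (noise) variable $\omega\in\Omega$, and in the time variable $t\in[0,T]$---and to separate them. First I would prove that for each \emph{fixed} $t$ the slice $\{v_n(t,\cdot,\cdot)\}_{n\in\mathbb{N}}$ is relatively compact in $L^2(\Omega\times{\cal O})$, using hypotheses (i), (ii), (4i) and (4ii); the time-increment bound (iii) is then used only at the end to promote this pointwise-in-$t$ compactness to compactness in $C([0,T],L^2(\Omega\times{\cal O}))$ via the Arzel\`a--Ascoli theorem for Banach-space-valued continuous maps.

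For the fixed-time statement I would establish total boundedness of $\{v_n(t)\}_n$ in $L^2(\Omega\times{\cal O})$ by constructing, for every $\varepsilon>0$, a finite $\varepsilon$-net through three commuting, finite-dimensional reductions whose errors are uniform in $n$ and $t$. The \emph{spatial} reduction uses the compact embedding $H^1({\cal O})\hookrightarrow\hookrightarrow L^2({\cal O})$: letting $\pi_M$ be the projection onto $\mathrm{span}\{\phi_1,\dots,\phi_M\}$, a quantitative spectral-truncation estimate and the uniform bound (i) give $\sup_n\sup_t\mathbb{E}\|v_n(t)-\pi_M v_n(t)\|_{L^2({\cal O})}^2\to0$ as $M\to\infty$ (this also shows the means $\mathbb{E}v_n(t)$ lie in a relatively compact subset of $L^2({\cal O})$). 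The \emph{chaos} reduction uses the Ornstein--Uhlenbeck (Mehler) semigroup $Q_\rho$, which acts on the $k$-th Wiener chaos by $\rho^k$: since $\mathbb{E}\|{\cal D}_\cdot F\|_{L^2(\mathbb{R})}^2=\sum_{k\ge1}k\,\mathbb{E}|J_kF|^2$, the uniform ${\cal D}^{1,2}$ bound (ii) forces $\sup_n\sup_t\int_{\cal O}\mathbb{E}|v_n(t,x)-Q_\rho v_n(t,x)|^2\,dx\to0$ as $\rho\uparrow1$, so we may keep only finitely many chaoses. The \emph{noise} reduction uses (4ii) to confine the derivatives, up to small uniform $L^2$-error, to a fixed interval $[\alpha,\beta]$, and (4i) to verify the Fr\'echet--Kolmogorov--Riesz criterion for $\{\theta\mapsto{\cal D}_\theta v_n(t,x)\}$; replacing ${\cal D}_\theta v_n$ by its averages over the cells of a sufficiently fine partition of $[\alpha,\beta]$ then costs only a uniformly small error. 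Invoking the Gaussian Poincar\'e inequality $\mathbb{E}|G-\mathbb{E}G|^2\le\mathbb{E}\|{\cal D}_\cdot G\|_{L^2(\mathbb{R})}^2$ transfers this derivative-level control back to control of the functionals themselves. After these reductions $\{v_n(t)\}_n$ is, up to $\varepsilon$, a bounded family of ${\cal D}^{1,2}$-functionals of finitely many Gaussian increments, valued in a finite-dimensional subspace of $L^2({\cal O})$, and there the finite-dimensional Rellich theorem for Gaussian--Sobolev spaces yields a finite $\varepsilon$-net.

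It remains to glue the slices together. Hypothesis (iii) gives $\sup_n\|v_n(t)-v_n(s)\|_{L^2(\Omega\times{\cal O})}\le\sqrt{C\,|t-s|}$, i.e.\ uniform equicontinuity of $\{t\mapsto v_n(t)\}$ in $C([0,T],L^2(\Omega\times{\cal O}))$. Together with the fixed-time relative compactness established above and the completeness of $L^2(\Omega\times{\cal O})$, the Arzel\`a--Ascoli theorem delivers relative compactness of $\{v_n\}$ in $C([0,T],L^2(\Omega\times{\cal O}))$.

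The genuine obstacle is the Wiener-direction reduction. In contrast with the finite-dimensional Rellich theorem, a bounded ball of ${\cal D}^{1,2}$ is \emph{not} relatively compact in $L^2(\Omega)$, the Gaussian space being infinite-dimensional, so strictly more than the ${\cal D}^{1,2}$ bound is needed. Conditions (4i) and (4ii) supply exactly this missing input: a quantitative Fr\'echet--Kolmogorov--Riesz equicontinuity and tightness in the noise-time parameter $\theta$. The delicate point is to make the ensuing finite-dimensional Gaussian approximation uniform \emph{simultaneously} over $n\in\mathbb{N}$ and $t\in[0,T]$, and to correctly convert the derivative-level estimates into functional-level estimates through the Poincar\'e inequality and the Ornstein--Uhlenbeck smoothing; this coupling of the three reductions, rather than any single one of them, is where the real work lies.
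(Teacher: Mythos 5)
First, a point of reference: the paper itself does not prove this theorem. It imports it verbatim from \cite[Theorem 2.3]{feng-zhao}, which is a refinement, to the space $C([0,T],L^2(\Omega\times{\cal O}))$, of the Wiener--Sobolev compactness criterion of Bally--Saussereau \cite{bally}. Measured against that cited argument, your overall architecture is the same one: fixed-$t$ relative compactness of $\{v_n(t)\}_n$ in $L^2(\Omega\times{\cal O})$ obtained from the three reductions (spatial compactness from (i), chaos/Ornstein--Uhlenbeck truncation from (ii), reduction to finitely many Gaussian increments from (4i)--(4ii)), followed by the Arzel\`a--Ascoli upgrade using the uniform $\sqrt{C|t-s|}$ equicontinuity from (iii). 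The Arzel\`a--Ascoli step, the chaos truncation estimate $\sum_{k>K}\mathbb{E}|J_kF|^2\le K^{-1}\mathbb{E}\|{\cal D}F\|^2$, and the spatial step are all sound as you state them (for the last one, since the hypothesis is an $H^1({\cal O})$ bound rather than $H^1_0({\cal O})$, you should use finite-rank approximations of the compact embedding $H^1({\cal O})\hookrightarrow L^2({\cal O})$ rather than Dirichlet spectral projections, but this is cosmetic).

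The genuine gap is in the step you yourself flag as the crux: converting derivative-level control into functional-level control. Two readings of your sketch are possible, and both fail as written. If the Fr\'echet--Kolmogorov--Riesz criterion is meant to give relative compactness of the $L^2(\Omega\times{\cal O})$-valued maps $\theta\mapsto{\cal D}_\theta v_n(t)$, it cannot: vector-valued FKR requires pointwise compactness in the target space, which is exactly what is being sought; the family $u_n(\theta)=e_n\chi_{[0,1]}(\theta)$, with $(e_n)$ orthonormal in $L^2(\Omega\times{\cal O})$, is bounded, uniformly shift-equicontinuous and tight in $\theta$, yet has no convergent subsequence. If instead the discretization is $\hat F=\mathbb{E}[F\mid{\cal G}_m]$, with ${\cal G}_m$ the $\sigma$-field of the increments over your partition of $[\alpha,\beta]$, then Poincar\'e gives $\mathbb{E}|F-\hat F|^2\le\mathbb{E}\|{\cal D}F-{\cal D}\hat F\|^2_{L^2(\mathbb{R})}$, but ${\cal D}_\theta\hat F$ is \emph{not} the cell average $\overline{{\cal D}F}$ of the derivative; it is $\mathbb{E}\bigl[\overline{{\cal D}F}\mid{\cal G}_m\bigr]$, and the leftover term $\mathbb{E}\|\overline{{\cal D}F}-\mathbb{E}[\overline{{\cal D}F}\mid{\cal G}_m]\|^2$ is of the same nature as the quantity you started from and is not controlled by (4i)--(4ii); iterating Poincar\'e on it would need moduli of continuity of \emph{second} Malliavin derivatives, which are not assumed. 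The cited proof closes this loop not with Poincar\'e but with the exact chaos algebra: writing $F=\sum_k I_k(f_k)$, one has $\mathbb{E}[I_k(f_k)\mid{\cal G}_m]=I_k(\bar f_k)$ with $\bar f_k$ the cell average of the kernel, hence $\mathbb{E}|F-\hat F|^2=\sum_k k!\,\|f_k-\bar f_k\|^2_{L^2(\mathbb{R}^k)}$, and the one-coordinate shift moduli of the symmetric kernels are precisely what (4i) bounds, via the identity $\mathbb{E}\int_{\mathbb{R}}|{\cal D}_{\theta+h}F-{\cal D}_\theta F|^2\,d\theta=\sum_k k\,k!\int_{\mathbb{R}}\|f_k(\cdot,\theta+h)-f_k(\cdot,\theta)\|^2\,d\theta$. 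Combined with the chaos truncation you already perform, this closes the estimate uniformly in $n$ and $t$; so the repair lives inside your own framework, but the Poincar\'e transfer, as stated, would not go through.
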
 

The $\mathcal{D}^{1,2}$-norm-preserving property under the measure presearving operator is also needed to complete Schader's fixed point argument in our case. 
\begin{lem}\label{Preserving} Suppose that $F$ is an $\mathcal{F}$-measurable random variable in $L^2(\Omega\times{\cal O})$ with
\begin{eqnarray*}
F\ =\ \sum_{m=0}^{\infty}\int_{\mathbb{R}^m}f_m(t_1,\ldots,t_m,x)dW_{t_1}\ldots dW_{t_m},
\end{eqnarray*}
where $f_m(t_1,\ldots,t_m,x)$ is a symmetric function of $t_1,t_2,\cdots,t_m$ in $L^2(\mathbb{R}^m)$ for each fixed $x$. Moreover let
 $F(\cdot, x)\in \mathcal{D}^{1,2}$, then for all $h\in \mathbb{R}$, $F(\theta_h\cdot, x)\in  \mathcal{D}^{1,2}$, and
$$\int_{{\cal O}}\|F(\theta_{h}\cdot,x)\|^2_{1,2}dx=\int_{{\cal O}}\|F(\cdot,x)\|^2_{1,2}dx,$$
where the norm in $\mathcal{D}^{1,2}$ is defined as $\|f(\cdot)\|_{1,2}:=\big(\mathbb{E}|f(\cdot)|^2+\mathbb{E}\|\mathcal{D}_rf(\cdot)\|_{L^2(\mathbb{R})}^2\big)^{\frac{1}{2}}$ for $f\in \mathcal{D}^{1,2}$.
\end{lem}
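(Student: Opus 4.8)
The plan is to reduce everything to the Wiener chaos expansion of $F$ and to exploit the fact that the shift $\theta_h$ acts on chaos kernels simply by translating all of their time arguments, a transformation that is an $L^2(\mathbb{R}^m)$-isometry because Lebesgue measure on $\mathbb{R}^m$ is translation invariant.

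First I would make precise the action of $\theta_h$ on a multiple Wiener--It\^o integral. Since $\theta_h\omega^k(s)=W^k(h+s)-W^k(h)$, the increments of $\theta_h W$ over an interval coincide with those of $W$ over the $h$-translated interval; consequently, writing $I_m(g)=\int_{\mathbb{R}^m} g\,dW_{t_1}\cdots dW_{t_m}$ for the $m$-fold multiple integral, the change of variables $t_i\mapsto t_i-h$ gives
$$\left(\int_{\mathbb{R}^m} f_m(t_1,\ldots,t_m,x)\, dW_{t_1}\cdots dW_{t_m}\right)(\theta_h\cdot)=\int_{\mathbb{R}^m} f_m(t_1-h,\ldots,t_m-h,x)\, dW_{t_1}\cdots dW_{t_m}.$$
Setting $f_m^{(h)}(t_1,\ldots,t_m,x):=f_m(t_1-h,\ldots,t_m-h,x)$, this identifies the chaos kernels of $F(\theta_h\cdot,x)$ as the translates $f_m^{(h)}(\cdot,x)$. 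That the two-sided Brownian motion is indexed by all of $\mathbb{R}$ is precisely what makes this translation free of any boundary effect.

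Next I would invoke the standard expressions (c.f. \cite{nuallart}) for the two pieces of the $\mathcal{D}^{1,2}$-norm in terms of the symmetric kernels: for each fixed $x$,
$$\mathbb{E}|F(\cdot,x)|^2=\sum_{m=0}^{\infty} m!\,\|f_m(\cdot,x)\|_{L^2(\mathbb{R}^m)}^2,\qquad \mathbb{E}\|\mathcal{D}_r F(\cdot,x)\|_{L^2(\mathbb{R})}^2=\sum_{m=1}^{\infty} m\cdot m!\,\|f_m(\cdot,x)\|_{L^2(\mathbb{R}^m)}^2,$$
the second following from $\mathcal{D}_r F=\sum_m m\, I_{m-1}(f_m(\cdot,r,x))$ and the isometry of multiple integrals. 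The key elementary fact is that the translation $(t_1,\ldots,t_m)\mapsto(t_1-h,\ldots,t_m-h)$ preserves Lebesgue measure on $\mathbb{R}^m$, so $\|f_m^{(h)}(\cdot,x)\|_{L^2(\mathbb{R}^m)}=\|f_m(\cdot,x)\|_{L^2(\mathbb{R}^m)}$ for every $m$ and $x$. Substituting the shifted kernels into both displayed identities shows that each term, hence each sum, is unchanged; in particular $F(\theta_h\cdot,x)\in\mathcal{D}^{1,2}$ with $\|F(\theta_h\cdot,x)\|_{1,2}=\|F(\cdot,x)\|_{1,2}$ for a.e.\ $x$, and integrating this pointwise identity over $\mathcal{O}$ yields the claim.

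The main technical point to justify carefully is the first step: that $F(\theta_h\cdot,x)$ is genuinely an element of $\mathcal{D}^{1,2}$ with chaos kernels exactly the translates $f_m^{(h)}$. This rests on $\theta_h$ being a measure-preserving transformation of $(\Omega,\mathcal{F},\mathbb{P})$, together with the cylindrical structure of $W$ so that the shift acts componentwise on each $W^k$. I would establish it by approximating $F$ by smooth cylindrical functionals, where the change of variables is elementary, and then passing to the limit, using the isometry above to control the $L^2(\Omega)$-norm and the Malliavin-derivative norm simultaneously and thereby to close the $\mathcal{D}^{1,2}$-convergence.
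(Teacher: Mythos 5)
Your proposal is correct and is essentially the proof the paper intends: the paper's own ``proof'' is a one-line deferral to \cite[Lemma 2.3]{feng-wu-zhao}, whose argument is exactly your chaos-expansion computation --- the shift $\theta_h$ translates every kernel $f_m$ by $h$ in all time arguments, and the $\mathcal{D}^{1,2}$-norm, expressed through $\sum_m m!\,\|f_m\|^2_{L^2(\mathbb{R}^m)}$ and $\sum_m m\cdot m!\,\|f_m\|^2_{L^2(\mathbb{R}^m)}$, is invariant under this translation by translation invariance of Lebesgue measure. Your write-up, including the approximation step identifying the shifted kernels and the componentwise treatment of the cylindrical noise, supplies precisely the details the paper leaves implicit.
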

\begin{proof}
The result can be proved following a similar argument as in \cite[Lemma 2.3]{feng-wu-zhao}.  
\end{proof}

\subsection{Existence of IHRIEs} \label{sec:4.2}
 The local existence theorem of random periodic solutions is presented in the following theorem.
\begin{thm}\label{THMC4M}
 Let $F:\mathbb{R} \times \mathbb{R}^d\to \mathbb{R}$ be a continuous map, globally
bounded and the Jacobian $\nabla F(t,\cdot)$ be globally bounded.
Assume $F(t,u)=F(t+\tau,u)$ for some fixed $\tau>0$, and  Condition (L) and Condition (B) hold. 
Then there exists at least one ${\cal B}(\mathbb{R})\otimes\mathcal{F}$-measurable map $Y^N: \mathbb{R}\times\Omega\rightarrow L^2({\cal O})$ 
 satisfying equation (\ref{VN}) and $Y^N(t+\tau, \omega)=Y^N(t, \theta_{\tau}\omega)$ for
any $t\in \mathbb{R}$ and $\omega\in \Omega$.
\end{thm}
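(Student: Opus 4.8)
The plan is to realize $Y^N$ as a fixed point of the nonlinear operator defined by the right-hand side of (\ref{VN}) and to invoke the generalized Schauder fixed point theorem (Theorem \ref{Schauder}) on the Banach space $\mathbb{B}=C([0,\tau],L^2(\Omega\times{\cal O}))$. First I would identify an element of $\mathbb{B}$ with a map $Y:\mathbb{R}\times\Omega\to L^2({\cal O})$ through the periodic extension $Y(t+n\tau,\omega)=Y(t,\theta_{n\tau}\omega)$ for $t\in[0,\tau)$, $n\in\mathbb{Z}$; this builds the desired relation $Y^N(t+\tau,\omega)=Y^N(t,\theta_\tau\omega)$ directly into the space. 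I then define $T:\mathbb{B}\to\mathbb{B}$ by letting $(TY)(t,\omega)$ be the right-hand side of (\ref{VN}) for $t\in[0,\tau]$. The first task is to show $T$ is well defined, i.e.\ the series and improper integrals converge in $L^2(\Omega\times{\cal O})$: since the ranges of the operators $\Phi^N(\cdot)P^k$, $k\geq m+1$, are the mutually orthogonal lines $E_k$, the $L^2({\cal O})$-norm splits as an $\ell^2$-sum over $k$, and combining the global bound $\|F\|_{L^2({\cal O})}\leq C$ with the truncated dichotomy estimate $\|\Phi^N(t-\hat{s},\theta_{\hat{s}}\omega)P^k\|\leq Ne^{\frac12\mu_k(t-\hat{s})}e^{\Lambda|\hat{s}|}$ from (\ref{PHI-B1})--(\ref{PHI+B1}) and choosing $0<\Lambda<\tfrac12\min\{\mu_m,|\mu_{m+1}|\}$ yields absolute convergence of the $\hat{s}$-integrals (forward over $(-\infty,t]$, backward over $[t,+\infty)$) and of the sums over $k$. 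I also verify that $T$ preserves periodicity: substituting $\hat{s}\mapsto\hat{s}+\tau$ in $(TY)(t+\tau,\omega)$ and using Condition (P) together with the flow identity $\theta_{\hat{s}+\tau}=\theta_{\hat{s}}\circ\theta_\tau$ and the induced invariance of $Y$ gives $(TY)(t+\tau,\omega)=(TY)(t,\theta_\tau\omega)$, so $T$ maps $\mathbb{B}$ into $\mathbb{B}$.

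Next I would exhibit a closed convex set $S\subset\mathbb{B}$ with $T(S)\subset S$. The estimate above gives a uniform bound $\sup_{t\in[0,\tau]}\mathbb{E}\|(TY)(t)\|_{L^2({\cal O})}^2\leq R^2$ independent of $Y$, so $S$ may be taken to be the closed ball of radius $R$ in $\mathbb{B}$. Continuity of $T$ on $\mathbb{B}$ then follows from the global Lipschitz property of the Nemytskii operator $F$ (guaranteed by the global boundedness of the Jacobian $\nabla F$) together with dominated convergence applied to the convergent majorants found above.

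The core of the argument is the relative compactness of $T(S)$, which I would establish by verifying the four hypotheses of Theorem \ref{B-S2} for $v=TY$, uniformly in $Y\in S$. For condition (1) I bound $\mathbb{E}\|(TY)(t)\|_{H^1({\cal O})}^2$ using $\|\nabla\phi_k\|_{L^2({\cal O})}\leq C\sqrt{|\mu_k|}$ from (\ref{eqn1.6}); the extra factor $|\mu_k|$ is absorbed by the exponential decay in the dichotomy bound. Condition (3), the $|t-s|$-Hölder bound in time, follows from splitting $(TY)(t)-(TY)(s)$ into a boundary term and an increment of $\Phi^N$, controlled by $\|T_tP^k-P^k\|^2\leq|\mu_k||t|$ and (\ref{LEMMA2C42}) of Corollary \ref{LEMMA2C4}. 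The genuinely delicate conditions are (2), (4i) and (4ii), concerning the Malliavin derivative $\mathcal{D}_\theta(TY)$. Here I would differentiate the integrand: since $\Phi(t-\hat{s},\theta_{\hat{s}}\omega)P^k u=e^{\mu_k(t-\hat{s})+\sigma_k\theta_{\hat{s}}W^k_{t-\hat{s}}}u^k$ depends on the noise only through the scalar exponent, its Malliavin derivative in the $k$-th direction is an explicit multiple of itself supported on a bounded $\theta$-interval, while the Lipschitz truncation in (\ref{PHI-B1})--(\ref{PHI+B1}) contributes only a bounded factor; the chain rule produces a second term $\mathcal{D}_\theta F(\hat{s},Y(\hat{s}))=\nabla F(\hat{s},Y(\hat{s}))\,\mathcal{D}_\theta Y(\hat{s})$, bounded via $\|\nabla F\|_\infty$. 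Crucially, the periodic extension makes $Y$ appear at shifted arguments $\theta_{n\tau}\omega$, and I would invoke the $\mathcal{D}^{1,2}$-norm-preserving Lemma \ref{Preserving} to control these shifted Malliavin norms by the norms on $[0,\tau]$, keeping all bounds uniform over $S$. The equicontinuity (4i) and tail smallness (4ii) would then follow from the explicit exponential factors, and I expect this to be the main obstacle: one must interchange the $k$-sum, the $\hat{s}$-integral and the Malliavin derivative, re-derive dichotomy-type bounds for the differentiated kernel, and show that both the $\theta$-translation modulus and the large-$|\theta|$ tails of $\sum_k\mathcal{D}_\theta(\cdots)$ are uniformly small.

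Finally, with $T:S\to S$ continuous and $T(S)$ relatively compact in $\mathbb{B}=C([0,\tau],L^2(\Omega\times{\cal O}))$, Theorem \ref{Schauder} furnishes a fixed point $Y^N\in S$, i.e.\ a solution of (\ref{VN}) on $[0,\tau]$. Because $T$ preserves the periodicity relation, the periodic extension of $Y^N$ solves (\ref{VN}) for all $t\in\mathbb{R}$ and is the desired $\mathcal{B}(\mathbb{R})\otimes\mathcal{F}$-measurable map satisfying $Y^N(t+\tau,\omega)=Y^N(t,\theta_\tau\omega)$, with measurability inherited from the construction.
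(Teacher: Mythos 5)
Your overall strategy---realizing $Y^N$ as a fixed point of the map given by the right-hand side of (\ref{VN}), applying the generalized Schauder theorem (Theorem \ref{Schauder}), and deriving relative compactness from the Wiener--Sobolev criterion (Theorem \ref{B-S2}) with Lemma \ref{Preserving} handling the $\theta$-shifts---is the same as the paper's. However, there is a genuine gap in your choice of the invariant convex set $S$. You take $S$ to be a closed ball in $C([0,\tau],L^2(\Omega\times\mathcal{O}))$; such a ball contains elements that are not Malliavin differentiable at all, and for those $Y$ the chain-rule term $\mathcal{D}_\theta F(\hat{s},Y(\hat{s}))=\nabla F(\hat{s},Y(\hat{s}))\,\mathcal{D}_\theta Y(\hat{s})$ appearing in $\mathcal{D}_\theta(TY)$ is undefined, so $TY$ need not lie in $\mathcal{D}^{1,2}$ either. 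Even restricting to differentiable $Y$, your claim that this term is ``bounded via $\|\nabla F\|_\infty$'' is not correct as stated: the bound is $\|\nabla F\|_\infty$ times the integrated Malliavin norm of $Y$ itself, which is in no way uniformly controlled over a ball of the $L^2(\Omega\times\mathcal{O})$-valued sup norm. Consequently hypotheses (2), (4i) and (4ii) of Theorem \ref{B-S2} cannot be verified uniformly over $T(S)$, and the compactness step collapses.

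The paper resolves exactly this point with a self-consistent construction that your proposal is missing: the convex set is not a ball but the set $C^{\Lambda,N}_{\tau,\rho}(\mathbb{R},L^2(\mathcal{O},\mathcal{D}^{1,2}))$ of periodic functions whose Malliavin derivatives obey $e^{-2\Lambda|t|}\sum_{j}\int_{\mathbb{R}}\mathbb{E}\int_{\mathcal{O}}|\mathcal{D}_r^j f(t,\cdot,x)|^2\,dx\,dr\le\rho^N(t)$ together with a uniform difference-quotient (equicontinuity) bound, where the bound function $\rho^N$ is not arbitrary: it is defined as the solution of the linear integral equation (\ref{NBJIHU}), namely $\rho^N(t)=K_1^N\int_0^\tau e^{-\frac12\mu|t-\hat{s}|}\rho^N(\hat{s})\,d\hat{s}+K_2^N$. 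The constants $K_1^N$ (proportional to $N^2\|\nabla F\|_\infty^2$) and $K_2^N$ (proportional to $\|F\|_\infty^2\sum_j\sigma_j^2$) are precisely what the estimate of $\mathcal{D}_r^j\mathcal{M}^N(Y^N)$ produces: the explicit derivative of the truncated kernel (the lemma giving (\ref{441})--(\ref{44444})) contributes $K_2^N$, while the chain-rule term contributes the integral of the input's Malliavin bound against the dichotomy kernel; choosing $\rho^N$ to solve (\ref{NBJIHU}) is exactly what makes the set invariant under $\mathcal{M}^N$ (Lemma \ref{LEMMA4C4}). Without this closed loop---a priori Malliavin bounds on the input reproducing themselves for the output---Schauder's theorem has no relatively compact invariant set to act on. (A minor further difference, not a gap: the paper works in the exponentially weighted space $C^\Lambda_\tau(\mathbb{R},L^2(\Omega\times\mathcal{O}))$ with $\Lambda<\frac14\min\{-\mu_{m+1},\mu_m\}$ rather than on $[0,\tau]$ with periodic extension; given the built-in periodicity this is essentially equivalent bookkeeping for the infinite-horizon integrals.)
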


\textnormal{Define a Banach space $C^{\Lambda}_{\tau}(\mathbb{R}, L^2(\Omega\times {\cal O}))$}
 \begin{eqnarray*}
 C^{\Lambda}_{\tau}(\mathbb{R}, L^2(\Omega\times {\cal O}))&:=&\{f\in C^{\Lambda}(\mathbb{R}, L^2(\Omega\times {\cal O}))\ {\rm with\ the\ following\ norm:}\\
 && \|f\|^{\Lambda}_{{\cal O}}:=\sup_{t\in \mathbb{R}}e^{-2\Lambda|t|}\|f\|_{L^2(\Omega\times {\cal O})},\\
  &&{\rm and\ for \ any} \ 
t\in \mathbb{R},
 \
f(t+\tau,\omega,x)=f(t,\theta_{\tau}\omega,x) \},
\end{eqnarray*}
where  $\Lambda< \frac{1}{4}\mu:=\frac{1}{4}\min \{-\mu_{m+1},\mu_m\}$ is a fixed real number.
Then define for any $Y^N\in  C^{\Lambda}_{\tau}(\mathbb{R}, L^2(\Omega\times {\cal O}))$  
\begin{eqnarray}\label{eqn:M}
\begin{split}
{\cal M}^N(Y^N)(t,\omega,x)&=&\int_{-\infty}^t\sum_{k=m+1}^{\infty}\Phi^N(t-{\hat{s}},\theta_{\hat{s}} \omega)P^kF({\hat{s}},Y^N({\hat{s}},\omega))(x)d{\hat{s}} \\
& &-\int^{+\infty}_t\sum_{k=1}^{m}\Phi^N(t-{\hat{s}},\theta_{\hat{s}} \omega)P^kF({\hat{s}},Y^N({\hat{s}},\omega))(x)d{\hat{s}}.
\end{split}
\end{eqnarray}
The aim of the proof is to find a fixed point of ${\cal M}^N$ in $C^{\Lambda}_{\tau}(\mathbb{R}, L^2(\Omega\times {\cal O}))$ using Schauder's fixed point theorem. 
We split it into several parts to make it more readable.
\begin{lem}\label{LEMMA3C4}Under the conditions of Theorem \ref{THMC4M},
$${\cal M}^N:C^{\Lambda}_{\tau}(\mathbb{R}, L^2(\Omega\times {\cal O})) \to C^{\Lambda}_{\tau}(\mathbb{R}, L^2(\Omega\times {\cal O})),$$
is a continuous map. Moreover, ${\cal M}^N$ maps $C^{\Lambda}_{\tau}(\mathbb{R}, L^2(\Omega\times {\cal O}))$ into $C^{\Lambda}_{\tau}(\mathbb{R}, L^2(\Omega\times {\cal O}))\cap L_{\Lambda}^{\infty}(\mathbb{R}, L^2(\Omega, H^1_0({\cal O}))),$
where $L^{\infty}_{\Lambda}\big(\mathbb{R}, L^2(\Omega,H^1_0({\cal O}))\big)$ is a Banach space of functions $f$ equipped with the following norm
 \begin{eqnarray*}
 \|f\|_{\Lambda}^{\infty}:=\inf\big\{C\geq 0: e^{-2\Lambda|t|}\|f\|_{L^2(\Omega,H^1_0({\cal O}))}\leq C\ \ \rm{for\ \ almost\ \ every}\ t\in \mathbb{R}\big\}.   
 \end{eqnarray*}
\end{lem}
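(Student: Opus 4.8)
The plan is to verify the three assertions in turn: that ${\cal M}^N$ maps $C^{\Lambda}_{\tau}(\mathbb{R},L^2(\Omega\times{\cal O}))$ into itself, that it is continuous, and that its image in fact lies in $L^{\infty}_{\Lambda}(\mathbb{R},L^2(\Omega,H^1_0({\cal O})))$. Every estimate will rest on the \emph{deterministic} truncation bound $\|\Phi^N(t,\theta_{\hat s}\omega)P^k\|\le Ne^{\frac12\mu_kt}e^{\Lambda|\hat s|}$ (valid for $k\ge m+1,\,t\ge0$ and for $k\le m,\,t\le0$), the global bound on $F$, the pairwise orthogonality of the eigenspaces $E_k=\mathrm{span}\{\phi_k\}$, and the gradient control $\|\nabla\phi_k\|_{L^2({\cal O})}\le C\sqrt{|\mu_k|}$ of (\ref{eqn1.6}). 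I would record at the outset the simplification that, $F$ being globally bounded, $\|F(\hat s,Y^N(\hat s,\omega))\|_{L^2({\cal O})}\le\|F\|_\infty|{\cal O}|^{1/2}$ uniformly, so the mapping and regularity bounds will not see the size of $Y^N$ at all; only the continuity statement will use the bounded Jacobian of $F$.

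For membership in $C^{\Lambda}_{\tau}$ I would first bound the $L^2$-norm of the forward integrand. Orthogonality of the ranges gives
\[
\Big\|\sum_{k\ge m+1}\Phi^N(t-\hat s,\theta_{\hat s}\omega)P^kF\Big\|_{L^2({\cal O})}^2=\sum_{k\ge m+1}\big\|\Phi^N(t-\hat s,\theta_{\hat s}\omega)P^kF\big\|_{L^2({\cal O})}^2,
\]
and the truncation bound together with $\mu_k\le\mu_{m+1}<0$ (recall $t-\hat s>0$ here) dominates this by $N^2e^{2\Lambda|\hat s|}e^{\mu_{m+1}(t-\hat s)}\|F\|^2_{L^2({\cal O})}$. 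Since the bound is deterministic, the same holds in $L^2(\Omega\times{\cal O})$. Integrating the resulting $Ne^{\Lambda|\hat s|}e^{\frac12\mu_{m+1}(t-\hat s)}\|F\|_\infty|{\cal O}|^{1/2}$ over $\hat s\in(-\infty,t]$ converges because $\Lambda<\tfrac14\mu\le\tfrac12|\mu_{m+1}|$, and yields a bound of order $e^{\Lambda|t|}$, so $e^{-2\Lambda|t|}\|{\cal M}^N(Y^N)(t)\|_{L^2(\Omega\times{\cal O})}\le Ce^{-\Lambda|t|}$ and the $\|\cdot\|^{\Lambda}_{\cal O}$-norm is finite; the backward integral over the finite index set $k\le m$ (with $\mu_k>0$, $t-\hat s<0$) is treated identically. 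The covariance ${\cal M}^N(Y^N)(t+\tau,\omega)={\cal M}^N(Y^N)(t,\theta_\tau\omega)$ follows from the substitution $\hat s\mapsto\hat s+\tau$ combined with Condition (P), the periodicity $Y^N(\hat s+\tau,\omega)=Y^N(\hat s,\theta_\tau\omega)$ and the flow identity $\theta_{\hat s+\tau}\omega=\theta_{\hat s}\theta_\tau\omega$; continuity of $t\mapsto{\cal M}^N(Y^N)(t)$ in $L^2(\Omega\times{\cal O})$ follows by dominated convergence with the dominating function just produced.

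The delicate point, and what I expect to be the main obstacle, is the $H^1_0$-regularity. Differentiating the forward integral in $x$ replaces each $\phi_k$ by $\nabla\phi_k$, so (\ref{eqn1.6}) and the diagonalisation of ${\cal L}$ produce the series $\sum_{k\ge m+1}|\mu_k|e^{\mu_k(t-\hat s)}\|P^kF\|^2$, which---unlike its $L^2$ analogue---is singular at $\hat s=t$ because $\sum_k|\mu_k|$ diverges. I would tame it by writing $e^{\mu_k(t-\hat s)}=e^{\mu_k(t-\hat s)/2}\,e^{\mu_k(t-\hat s)/2}$ and using the elementary maximisation $\sup_{k\ge m+1}|\mu_k|e^{\mu_k(t-\hat s)/2}\le C(t-\hat s)^{-1}$, which leaves $\sum_{k\ge m+1}e^{\mu_k(t-\hat s)/2}\|P^kF\|^2\le e^{\mu_{m+1}(t-\hat s)/2}\|F\|^2$. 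After taking square roots the gradient of the forward integrand is dominated in $L^2(\Omega\times{\cal O})$ by $CN(t-\hat s)^{-1/2}e^{\Lambda|\hat s|}e^{\frac14\mu_{m+1}(t-\hat s)}$. The exponent $\tfrac14$ is the crucial gain: the factor $(t-\hat s)^{-1/2}$ is integrable at $\hat s=t$, while the surviving decay $e^{\frac14\mu_{m+1}(t-\hat s)}$ dominates the growth $e^{\Lambda|\hat s|}$ as $\hat s\to-\infty$ \emph{precisely} when $\Lambda<\tfrac14|\mu_{m+1}|$, which is exactly the standing hypothesis $\Lambda<\tfrac14\mu$. This is where the constant $\tfrac14$ in the definition of $C^{\Lambda}_{\tau}$ is forced. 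The corresponding backward term is a finite sum over $k\le m$, so the $\sqrt{|\mu_k|}$ factors are bounded constants and no singularity appears; integrating gives again growth $O(e^{\Lambda|t|})$, hence a finite $\|\cdot\|^{\infty}_{\Lambda}$-norm.

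Finally, for continuity of ${\cal M}^N$ as a self-map I would not aim for a global Lipschitz bound---the weight $e^{-2\Lambda|t|}$ would not absorb it---but split according to the size of $|t|$. For $|t|$ large, the mere boundedness $\|F(\hat s,Y_1)-F(\hat s,Y_2)\|_{L^2({\cal O})}\le2\|F\|_\infty|{\cal O}|^{1/2}$ reproduces the estimate of the second paragraph and gives $e^{-2\Lambda|t|}\|{\cal M}^N(Y_1)(t)-{\cal M}^N(Y_2)(t)\|_{L^2(\Omega\times{\cal O})}\le Ce^{-\Lambda|t|}$, uniformly small outside a compact set. On that compact set the bounded Jacobian yields the local Lipschitz estimate $\|F(\hat s,Y_1)-F(\hat s,Y_2)\|_{L^2({\cal O})}\le\|\nabla F\|_\infty\|Y_1(\hat s)-Y_2(\hat s)\|_{L^2({\cal O})}$, and the same integral bounds give $\sup_{|t|\le T}e^{-2\Lambda|t|}\|{\cal M}^N(Y_1)(t)-{\cal M}^N(Y_2)(t)\|\le C(T)\|Y_1-Y_2\|^{\Lambda}_{\cal O}$. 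Combining the two regimes gives $\|{\cal M}^N(Y_1)-{\cal M}^N(Y_2)\|^{\Lambda}_{\cal O}\to0$ as $\|Y_1-Y_2\|^{\Lambda}_{\cal O}\to0$, which is the claimed continuity.
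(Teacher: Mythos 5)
Your self-mapping bound, the $\theta_\tau$-equivariance computation, and the qualitative $t$-continuity via domination are essentially sound, and your route to the $H^1_0$ estimate is genuinely different from the paper's: you tame the divergence of $\sum_k|\mu_k|$ by the elementary maximisation $\sup_k|\mu_k|e^{\mu_k(t-\hat{s})/2}\le C(t-\hat{s})^{-1}$, producing an integrable singularity $(t-\hat{s})^{-1/2}$, whereas the paper pairs $|\mu_k|$ with $e^{\mu_k(t-\hat{s})}$ inside a Cauchy--Schwarz estimate in $(y,\hat{s})$ and absorbs it through $\int_0^\infty|\mu_k|e^{\mu_k r}dr=1$, uniformly in $k$. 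One caveat: your intermediate series bound $\bigl\|\sum_k b_k\nabla\phi_k\bigr\|_{L^2}^2\le C\sum_k|b_k|^2|\mu_k|$ does \emph{not} follow from (\ref{eqn1.6}) alone, since the $\nabla\phi_k$ are not orthogonal; summing (\ref{eqn1.6}) with Cauchy--Schwarz in $k$ either reinstates the divergent $\sum_k|\mu_k|$ or, after your splitting, produces a dimension-dependent singularity that is non-integrable for $d\ge 2$. What rescues the claim is the ellipticity/Dirichlet-form identity $\frac{\gamma}{2}\|\nabla u\|_{L^2}^2\le-\langle{\cal L}u,u\rangle+\|c\|_\infty\|u\|_{L^2}^2=\sum_k(-\mu_k)|b_k|^2+\|c\|_\infty\sum_k|b_k|^2$ for $u=\sum_{k\ge m+1}b_k\phi_k$; this is presumably what you mean by ``diagonalisation of ${\cal L}$'', but it is doing real work and must be spelled out.

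The genuine gap is in the continuity step, and it comes from a missed structural feature of $C^\Lambda_\tau$. In your compact regime $|t|\le T$, the integral defining ${\cal M}^N(Y_i)(t)$ still runs over all $\hat{s}\in(-\infty,t]$, and the only conversion you use from $\|Y_1(\hat{s})-Y_2(\hat{s})\|_{L^2(\Omega\times{\cal O})}$ to the norm $\|Y_1-Y_2\|^\Lambda_{\cal O}$ is the pointwise bound $e^{2\Lambda|\hat{s}|}\|Y_1-Y_2\|^\Lambda_{\cal O}$. Combined with the factor $e^{\Lambda|\hat{s}|}$ from the truncation bound, ``the same integral bounds'' now require $\int_{-\infty}^t e^{\frac12\mu_{m+1}(t-\hat{s})}e^{3\Lambda|\hat{s}|}d\hat{s}<\infty$, i.e.\ $3\Lambda<\frac12|\mu_{m+1}|$ (and similarly $3\Lambda<\frac12\mu_m$ for the backward part). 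The standing hypothesis is only $\Lambda<\frac14\mu$, so for every $\Lambda\in[\frac16\mu,\frac14\mu)$ --- a nonempty range --- your local Lipschitz estimate involves a divergent integral and the argument fails. The missing idea, which also shows your opening disclaimer (``I would not aim for a global Lipschitz bound --- the weight would not absorb it'') to be mistaken, is that elements of $C^\Lambda_\tau$ satisfy $f(t+\tau,\omega)=f(t,\theta_\tau\omega)$ with $\theta_\tau$ measure-preserving, so $\hat{s}\mapsto\mathbb{E}\|Y_1(\hat{s})-Y_2(\hat{s})\|^2_{L^2({\cal O})}$ is $\tau$-periodic and hence \emph{uniformly} bounded by $e^{2\Lambda\tau}\bigl(\|Y_1-Y_2\|^\Lambda_{\cal O}\bigr)^2$. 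With this, the weighted integrals converge under the much weaker condition $2\Lambda<|\mu_{m+1}|$, and one obtains a genuine global Lipschitz bound --- which is exactly the paper's Step 3. Alternatively you could repair your two-regime scheme by a further splitting of the $\hat{s}$-integral (Lipschitz bound on $[-R,t]$, boundedness of $F$ on $(-\infty,-R)$ with a tail small in $R$), but some such additional device is indispensable.
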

\begin{proof}
{\it \textbf{Step 1}}: It is sufficient to show that ${\cal M}^N$ maps $ C^{\Lambda}_{\tau}(\mathbb{R}, L^2(\Omega\times {\cal O}))$ to itself.
\begin{enumerate}
  \item[(A)] Firstly we show that for any $Y^N\in  C^{\Lambda}_{\tau}(\mathbb{R}, L^2(\Omega\times {\cal O}))$, $$\sup\limits_{t\in \mathbb{R}}e^{-2\Lambda|t|}\mathbb{E}\int_{{\cal O}}|{\cal M}^N(Y^N)(t,\cdot,x)|^2dx<\infty.$$
  In fact, for any $t\in \mathbb{R}$, 
  \begin{eqnarray*}
  &&e^{-2\Lambda|t|}\mathbb{E}\int_{{\cal O}}|{\cal M}^N(Y^N)(t,\cdot,x)|^2dx\\
  &\leq &2e^{-2\Lambda|t|}\mathbb{E}\int_{{\cal O}}\Big|\int_{-\infty}^t\sum_{k=m+1}^{\infty}\Phi^N(t-{\hat{s}},\theta_{\hat{s}} \cdot)P^kF({\hat{s}},Y^N)(x)d{\hat{s}}\Big|^2dx\\
  &&+2e^{-2\Lambda|t|}\mathbb{E}\int_{{\cal O}}\Big|\int^{+\infty}_t\sum_{k=1}^{m}\Phi^N(t-{\hat{s}},\theta_{\hat{s}} \cdot)P^kF({\hat{s}},Y^N)(x)d{\hat{s}}\Big|^2dx\\
  & =:&J_1+J_2.
   \end{eqnarray*}
   For simplicity we shall write $\Phi^N_{t-s,s}$ instead of $\Phi^N(t-s,\theta_s \omega)$. From (\ref{PPk1}) we can estimate $J_1$ by using (\ref{412}) and H\"older inequality as follows, 
     \begin{eqnarray*} 
  J_1    &= &2e^{-2\Lambda|t|}\mathbb{E}\sum_{k=m+1}^{\infty}\int_{{\cal O}}\Big|\int_{-\infty}^t\big\langle\phi_k(\cdot), \Phi^N_{t-{\hat{s}},{\hat{s}}}P^kF^k({\hat{s}},Y^N)(\cdot)\big\rangle \phi_k(x)d{\hat{s}}\Big|^2dx\\
   &\leq &2e^{-2\Lambda|t|}\mathbb{E}\sum_{k=m+1}^{\infty}\int_{{\cal O}}\Big|\int_{-\infty}^t \|\Phi^N_{t-{\hat{s}},{\hat{s}}}P^k\||F^k({\hat{s}},Y^N)| d{\hat{s}}\Big|^2|\phi_k(x)|^2dx\\
  & \leq&2e^{-2\Lambda|t|}\mathbb{E}\sum_{k=m+1}^{\infty}\Big(\int_{-\infty}^t\|\Phi^N_{t-{\hat{s}},{\hat{s}}}P^k\||F^k({\hat{s}},Y^N)| d{\hat{s}}\Big)^2\\
  & \leq&2e^{-2\Lambda|t|}N^2\mathbb{E}\sum_{k=m+1}^{\infty}\Big(\int_{-\infty}^te^{\frac{1}{2}\mu_{k}(t-{\hat{s}})}e^{\Lambda|\hat{s}|}|F^k({\hat{s}},Y^N)| d{\hat{s}}\Big)^2,
  \end{eqnarray*}
where $|F^k(t,y)|:=\langle F(t,y)(\cdot),\phi_k(\cdot)\rangle$.
Note that $e^{\Lambda |\hat{s}|}\leq e^{\Lambda \hat{s}}+e^{-\Lambda \hat{s}}$. Based on the fact, the following trick will be use throughout the whole proof:
\begin{align}\label{eqn:trick}
\begin{split}
& e^{-\Lambda |t|}\int_{-\infty}^te^{\frac{1}{2}\mu_{k}(t-{\hat{s}})}e^{\Lambda|\hat{s}|}|F^k({\hat{s}},Y^N)| d{\hat{s}}\\
\leq & e^{-\Lambda t}\int_{-\infty}^te^{\frac{1}{2}\mu_{k}(t-{\hat{s}})}e^{\Lambda \hat{s}}|F^k({\hat{s}},Y^N)| d{\hat{s}}+e^{\Lambda t}\int_{-\infty}^te^{\frac{1}{2}\mu_{k}(t-{\hat{s}})}e^{-\Lambda \hat{s}}|F^k({\hat{s}},Y^N)| d{\hat{s}}\\
= & \int_{-\infty}^t e^{(\frac{1}{2}\mu_k-\Lambda)(t-\hat{s})}|F^k({\hat{s}},Y^N)| d{\hat{s}}+\int_{-\infty}^t e^{(\frac{1}{2}\mu_k+\Lambda)(t-\hat{s})}|F^k({\hat{s}},Y^N)| d{\hat{s}}\\
\leq & \Big(\int_{-\infty}^t e^{(\frac{1}{2}\mu_k-\Lambda)(t-\hat{s})}d{\hat{s}}\Big)^{\frac{1}{2}}\Big(\int_{-\infty}^t e^{(\frac{1}{2}\mu_k-\Lambda)(t-\hat{s})}|F^k({\hat{s}},Y^N)|^2 d{\hat{s}}\Big)^{\frac{1}{2}}\\
&+\Big(\int_{-\infty}^t e^{(\frac{1}{2}\mu_k+\Lambda)(t-\hat{s})}d{\hat{s}}\Big)^{\frac{1}{2}}\Big(\int_{-\infty}^t e^{(\frac{1}{2}\mu_k+\Lambda)(t-\hat{s})}|F^k({\hat{s}},Y^N)|^2 d{\hat{s}}\Big)^{\frac{1}{2}},
\end{split}
\end{align}
where the last line follows Cauchy-Schwarz inequality. Now substituting (\ref{eqn:trick}) to $J_1$ with $\mu_k\leq \mu_{m+1}$ yields
  \begin{eqnarray*} 
  && 2e^{-2\Lambda|t|}N^2\mathbb{E}\sum_{k=m+1}^{\infty}\Big(\int_{-\infty}^te^{\frac{1}{2}\mu_{k}(t-{\hat{s}})}e^{\Lambda|\hat{s}|}|F^k({\hat{s}},Y^N)|d{\hat{s}}\Big)^2\\
  & \leq & \frac{8N^2}{|\mu_{m+1}-2\Lambda|}\int_{-\infty}^te^{(\frac{1}{2}\mu_{m+1}-\Lambda)(t-{\hat{s}})}\mathbb{E}\sum_{k=m+1}^{\infty}|F^k({\hat{s}},Y^N)|^2 d{\hat{s}}\\
  &&+\frac{8N^2}{|\mu_{m+1}+2\Lambda|}\int_{-\infty}^te^{(\frac{1}{2}\mu_{m+1}+\Lambda)(t-{\hat{s}})}\mathbb{E}\sum_{k=m+1}^{\infty}|F^k({\hat{s}},Y^N)|^2 d{\hat{s}}\\
  &\leq & 16N^2\|F\|^2_{\infty}\Big(\frac{1}{|\mu_{m+1}-2\Lambda|^2}+\frac{1}{|\mu_{m+1}+2\Lambda|^2}\Big),
  \end{eqnarray*}
where 
\begin{equation}\label{eqn:boundedness_F}
\|F\|^2_{\infty}:=\sup_{t\in \mathbb{R},y\in \mathbb{R}}|F(t,y)|^2\geq \sup_{t\in \mathbb{R}}\sum_{k=1}^{\infty}|F^k(t,Y^N(t))|^2.
\end{equation}

Similarly we have
$$J_2\leq 16N^2\|F\|^2_{\infty}\Big(\frac{1}{|\mu_{m}-2\Lambda|^2}+\frac{1}{|\mu_{m}+2\Lambda|^2}\Big).$$
  \item[(B)]  Now it is sufficient to show that ${\cal M}^N (Y^N)(t, \omega,x)$ is continuous with respect to $t$. For this consider $t_1\leq t_2$, the following inequality simply from definition of $\mathcal{M}$, i.e., Eqn. (\ref{eqn:M}),
\begin{eqnarray*}
&&\mathbb{E}\int_{{\cal O}}|{\cal M}(Y^N)(t_1,\cdot,x)-{\cal M}(Y^N)(t_2,\cdot,x)|^2dx\\
&\leq &4\sum_{k=m+1}^{\infty}\mathbb{E}\int_{{\cal O}}\Big|\int_{-\infty}^{t_1}(\Phi^N_{t_1-{\hat{s}},{\hat{s}}}P^k-\Phi^N_{t_2-{\hat{s}},{\hat{s}}}P^k)F({\hat{s}},Y^N)(x)d{\hat{s}}\Big|^2 dx\\
&&+4\sum_{k=m+1}^{\infty}\mathbb{E}\int_{{\cal O}}\Big|\int_{t_1}^{t_2}\Phi^N_{t_2-{\hat{s}},{\hat{s}}}P^kF({\hat{s}},Y^N)(x)d{\hat{s}}\Big|^2 dx\\
&&+4\sum_{k=1}^{m}\mathbb{E}\int_{{\cal O}}\Big|\int^{+\infty}_{t_2}(\Phi^N_{t_1-{\hat{s}},{\hat{s}}}P^k-\Phi^N_{t_2-{\hat{s}},{\hat{s}}}P^k)F({\hat{s}},Y^N)(x)d{\hat{s}}\Big|^2 dx\\
&&+4\sum_{k=1}^{m}\mathbb{E}\int_{{\cal O}}\Big|\int_{t_1}^{t_2}\Phi^N_{t_1-{\hat{s}},{\hat{s}}}P^kF({\hat{s}},Y^N)(x)d{\hat{s}}\Big|^2dx\\
&=:&\sum_{i=1}^4 T_i.
\end{eqnarray*}
It is easy to check that
\begin{eqnarray*}
T_2&\leq &4\mathbb{E}\sum_{k=m+1}^{\infty}\int_{{\cal O}}\Big(\int_{t_1}^{t_2}\|\Phi^N_{t_2-{\hat{s}},{\hat{s}}}P^k\||F^k(\hat{s},Y^N)(x)|d{\hat{s}}\Big)^2dx\\
&\leq &4N^2\max\{e^{2\Lambda|t_2|},e^{2\Lambda|t_1|}\}\sum_{k=m+1}^{\infty}\int_{t_1}^{t_2}e^{\mu_k(t_2-\hat{s})}d{\hat{s}}\mathbb{E}\int_{t_1}^{t_2}\int_{{\cal O}}|F^k(\hat{s},Y^N)(x)|^2dxd\hat{s}\\
&\leq &4N^2\max\{e^{2\Lambda|t_2|},e^{2\Lambda|t_1|}\}\int_{t_1}^{t_2}e^{\mu_{m+1}(t_2-\hat{s})}d{\hat{s}}\mathbb{E}\int_{t_1}^{t_2}\sum_{k=m+1}^{\infty}\int_{{\cal O}}|F^k(\hat{s},Y^N)(x)|^2dxd\hat{s}\\
&\leq &
C_N\|F\|_{\infty}^2\max\{e^{2\Lambda|t_2|},e^{2\Lambda|t_1|}\}|t_2-t_1|^2,
\end{eqnarray*}
where $C_N$ is a general constant depending on $N$.
Similarly
\begin{eqnarray*}
T_4&:=&4\sum_{k=1}^{m}\mathbb{E}\int_{{\cal O}}\Big|\int_{t_1}^{t_2}\Phi^N_{t_1-{\hat{s}},{\hat{s}}}P^kF({\hat{s}},Y^N)(x)d{\hat{s}}\Big|^2dx\\
&\leq & C_N\|F\|_{\infty}^2\max\{e^{2\Lambda|t_2|},e^{2\Lambda|t_1|}\}|t_2-t_1|^2.
\end{eqnarray*}
For term $T_1$ we have the following estimation,
\begin{eqnarray*}
&&T_1\\
&\leq & 8\sum_{k=m+1}^{\infty}\mathbb{E}\int_{{\cal O}}\Big|\int_{-\infty}^{t_1}\min\Big\{1,\frac{Ne^{\frac{1}{2}\mu_{k}(t_1-\hat{s})}e^{\Lambda|\hat{s}|}}{\|\Phi_{t_1-{\hat{s}},{\hat{s}}}P^k\|}\Big\}
 (\Phi_{t_1-{\hat{s}},{\hat{s}}}P^k-\Phi_{t_2-{\hat{s}},{\hat{s}}}P^k)F^k({\hat{s}},Y^N)(x)d{\hat{s}}\Big|^2dx\\
&&+8\sum_{k=m+1}^{\infty}\mathbb{E}\int_{{\cal O}}\Big|\int_{-\infty}^{t_1}\Bigg( \min\Big\{1,\frac{Ne^{\frac{1}{2}\mu_{k}(t_1-\hat{s})}e^{\Lambda|\hat{s}|}}{\|\Phi_{t_1-{\hat{s}},{\hat{s}}}P^k\|}\Big\}-\min\Big\{1,\frac{Ne^{\frac{1}{2}\mu_{k}(t_2-\hat{s})}e^{\Lambda|\hat{s}|}}{\|\Phi_{t_2-{\hat{s}},{\hat{s}}}P^k\|}\Big\}\Bigg)\\
&&\hspace{3.6cm}\cdot \Phi_{t_2-{\hat{s}},{\hat{s}}}P^k F({\hat{s}},Y^N(\hat{s},\cdot))(x)d{\hat{s}}\Big|^2dx\\
&:=&T_{1,1}+T_{1,2}.
\end{eqnarray*}
Regarding the first term $T_{1,1}$, it is not hard to deduce the follows via the similar trick as in (\ref{eqn:trick}):
\begin{eqnarray*}
&&T_{1,1}\\
&\leq & 8N^2\sum_{k=m+1}^{\infty}\mathbb{E}\|\Phi_{t_2-t_1,t_1}P^k-P^k\|^2\int_{{\cal O}}\Big(\int_{-\infty}^{t_1}e^{\frac{1}{2}\mu_{k}(t_1-\hat{s})}e^{\Lambda|\hat{s}|}|F^k({\hat{s}},Y^N(\hat{s},\cdot))(x)|d{\hat{s}}\Big)^2dx\\
&\leq &\sum_{k=m+1}^{\infty} \dfrac{16 N^2 e^{2\Lambda|t_1|}}{|\mu_k-2\Lambda|}
\mathbb{E}\bigg(\|\Phi_{t_2-t_1,t_1}P^k-P^k\|^2\int_{{\cal O}}\int_{-\infty}^{t_1}e^{(\frac{1}{2}\mu_{k}-\Lambda)(t-\hat{s})}|F^k({\hat{s}},Y^N(\hat{s},\cdot))(x)|^2d\hat{s}dx\bigg)\\
&&+\sum_{k=m+1}^{\infty} \dfrac{16 N^2 e^{2\Lambda|t_1|}}{|\mu_k+2\Lambda|}
\mathbb{E}\bigg(\|\Phi_{t_2-t_1,t_1}P^k-P^k\|^2\int_{{\cal O}}\int_{-\infty}^{t_1}e^{(\frac{1}{2}\mu_{k}+\Lambda)(t-\hat{s})}|F^k({\hat{s}},Y^N(\hat{s},\cdot))(x)|^2d\hat{s}dx\bigg).
\end{eqnarray*}
Further applying the following fact  
\begin{equation}\label{eqn:PhiT}
\|\Phi_{t_2-t_1,t_1}P^k-P^k\|\leq \|\Phi_{t_2-t_1,t_1}P^k-T_{t_2-t_1}P^k\|+\|T_{t_2-t_1}P^k-P^k\|
\end{equation}
and boundedness of $F$ (\ref{eqn:boundedness_F}) leads to
\begin{align}\label{eqn:T_1,1}
\begin{split}
T_{1,1}\leq &\sum_{k=m+1}^{\infty}\dfrac{32 N^2 e^{2\Lambda|t_1|}}{|\mu_k-2\Lambda|}\|T_{t_2-t_1}P^k-P^k\|^2
\\
&
\hskip2cm 
\mathbb{E}\int_{{\cal O}}\int_{-\infty}^{t_1}e^{(\frac{1}{2}\mu_{m+1}-\Lambda)(t-\hat{s})}|F^k({\hat{s}},Y^N(\hat{s},\cdot))(x)|^2d\hat{s}dx\\
&\quad+\|F\|_{\infty}\sum_{k=m+1}^{\infty}\dfrac{32 N^2 e^{2\Lambda|t_1|}}{|\mu_k-2\Lambda|}\mathbb{E}\|T_{t_2-t_1}P^k-\Phi_{t_2-t_1,t_1}P^k\|^2\int_{-\infty}^{t_1}e^{(\frac{1}{2}\mu_{k}-\Lambda)(t-\hat{s})}
d\hat{s}\\
&\quad +\sum_{k=m+1}^{\infty}\dfrac{32 N^2 e^{2\Lambda|t_1|}}{|\mu_k+2\Lambda|}\|T_{t_2-t_1}P^k-P^k\|^2\\
&
\hskip2cm 
\mathbb{E}\int_{{\cal O}}\int_{-\infty}^{t_1}e^{(\frac{1}{2}\mu_{m+1}+\Lambda)(t-\hat{s})} |F^k({\hat{s}},Y^N(\hat{s},\cdot))(x)|^2d\hat{s}dx\\
&\quad +\|F\|_{\infty}\sum_{k=m+1}^{\infty}\dfrac{32 N^2 e^{2\Lambda|t_1|}}{|\mu_k+2\Lambda|}\mathbb{E}\|T_{t_2-t_1}P^k-\Phi_{t_2-t_1,t_1}P^k\|^2\int_{-\infty}^{t_1}e^{(\frac{1}{2}\mu_{k}+\Lambda)(t-\hat{s})}
d\hat{s}.
\end{split}
\end{align}
Now applying Corollary \ref{LEMMA2C4} to the four terms of (\ref{eqn:T_1,1}) yields
\begin{eqnarray*}
&&T_{1,1}\\
&\leq& C N^2 e^{2\Lambda|t_1|}\bigg\{\sup_{k}\dfrac{|\mu_k|}{|\mu_k-2\Lambda|}\int_{-\infty}^{t_1}e^{(\frac{1}{2}\mu_{m+1}-\Lambda)(t-\hat{s})}
\\
&&
\hskip2cm 
\mathbb{E}\int_{{\cal O}}\sum_{k=m+1}^{\infty}|F^k({\hat{s}},Y^N(\hat{s},\cdot))(x)|^2dxd\hat{s}\\
&&+\sup_{k}\dfrac{|\mu_k|}{|\mu_k+2\Lambda|}\int_{-\infty}^{t_1}e^{(\frac{1}{2}\mu_{m+1}+\Lambda)(t-\hat{s})}\mathbb{E}\int_{{\cal O}}\sum_{k=m+1}^{\infty}|F^k({\hat{s}},Y^N(\hat{s},\cdot))(x)|^2dxd\hat{s}\bigg\}|t_2-t_1|\\
&&+CN^2\|F\|_{\infty}e^{2\Lambda|t_1|}\max\Big\{\sup_{k\geq m+1}e^{2\sigma_k^2  |t_2-t_1|},1\Big\}\\
&&\hspace{0.3cm}\cdot\sum_{k=m+1}^{\infty}\Big(\dfrac{\sigma_k^2  }{|\mu_k-2\Lambda|^2}+\dfrac{\sigma_k^2 }{|\mu_k+2\Lambda|^2}\Big)(|t_2-t_1|+|t_2-t_1|^2)\\
&\leq & CN^2\|F\|_{\infty}^2e^{2\Lambda|t_1|}\Big(\dfrac{1}{|\mu_{m+1}-2\Lambda|}+\dfrac{1}{|\mu_{m+1}+2\Lambda|}\Big)\sup_{k}\dfrac{|\mu_k|}{|\mu_k-2\Lambda|}|t_2-t_1|\\
&&+CN^2\|F\|_{\infty}^2e^{2\Lambda|t_1|}\max\Big\{\sup_{k}e^{2\sigma_k^2  |t_2-t_1|},1\Big\}\Big(\sum_{k=1}^{\infty}\sigma_k^2 
\Big)\\
&&\hspace{0.3cm}\cdot\Big(\frac{1}{|\mu_{m+1}+2\Lambda|^2}+\frac{1}{|\mu_{m+1}-2\Lambda|^2}\Big)(|t_2-t_1|+|t_2-t_1|^2).
\end{eqnarray*}

With regards to term $T_{1,2}$, note that we have the following by using the inequality $|\min\{1,a\}-\min\{1,b\}|\leq |a-b|$ whenever $a,b \geq 0$:
\begin{eqnarray*}
&&\left|\min\Big\{1,\frac{N{\rm e}^{\frac{1}{2}\mu_{k}(t_1-\hat{s})}{\rm e}^{\Lambda|\hat{s}|}}{\|\Phi_{t_1-{\hat{s}},{\hat{s}}}P^k\|}\Big\}-\min\Big\{1,\frac{N{\rm e}^{\frac{1}{2}\mu_{k}(t_2-\hat{s})}{\rm e}^{\Lambda|\hat{s}|}}{\|\Phi_{t_2-{\hat{s}},{\hat{s}}}P^k\|}\Big\}\right|\\
&\leq & \left|\frac{N{\rm e}^{\frac{1}{2}\mu_{k}(t_1-\hat{s})}{\rm e}^{\Lambda|\hat{s}|}}{\|\Phi_{t_1-{\hat{s}},{\hat{s}}}P^k\|}-\frac{N{\rm e}^{\frac{1}{2}\mu_{k}(t_2-\hat{s})}{\rm e}^{\Lambda|\hat{s}|}}{\|\Phi_{t_2-{\hat{s}},{\hat{s}}}P^k\|}\right|\\
&\leq &\Bigg|\frac{N{\rm e}^{\frac{1}{2}\mu_{k}(t_1-\hat{s})}{\rm e}^{\Lambda|\hat{s}|}}{\|\Phi_{t_1-{\hat{s}},{\hat{s}}}P^k\|}-\frac{N{\rm e}^{\frac{1}{2}\mu_{k}(t_2-\hat{s})}{\rm e}^{\Lambda|\hat{s}|}}{\|\Phi_{t_1-{\hat{s}},{\hat{s}}}P^k\|}\Bigg|
+\Bigg|\frac{N{\rm e}^{\frac{1}{2}\mu_{k}(t_2-\hat{s})}{\rm e}^{\Lambda|\hat{s}|}}{\|\Phi_{t_1-{\hat{s}},{\hat{s}}}P^k\|}-\frac{N{\rm e}^{\frac{1}{2}\mu_{k}(t_2-\hat{s})}{\rm e}^{\Lambda|\hat{s}|}}{\|\Phi_{t_2-{\hat{s}},{\hat{s}}}P^k\|}\Bigg|\\
&\leq &\frac{N{\rm e}^{\Lambda|\hat{s}|}}{\|\Phi_{t_1-{\hat{s}},{\hat{s}}}P^k\|}\Big({\rm e}^{\frac{1}{2}\mu_{k}(t_1-\hat{s})}-{\rm e}^{\frac{1}{2}\mu_{k}(t_2-\hat{s})}\Big)+N{\rm e}^{\Lambda|\hat{s}|}{\rm e}^{\frac{1}{2}\mu_{k}(t_2-\hat{s})}\Bigg|\frac{\|\Phi_{t_1-{\hat{s}},{\hat{s}}}P^k\|-\|\Phi_{t_2-{\hat{s}},{\hat{s}}}P^k\|}{\|\Phi_{t_2-{\hat{s}},{\hat{s}}}P^k\|\|\Phi_{t_1-{\hat{s}},{\hat{s}}}P^k\|}\Bigg|\\
&\leq & \frac{N{\rm e}^{\Lambda|\hat{s}|}{\rm e}^{\frac{1}{2}\mu_{k}(t_1-\hat{s})}\|\Phi_{t_2-{t_1},{t_1}}P^k\|}{\|\Phi_{t_2-{t_1},{t_1}}P^k\|\|\Phi_{t_1-{\hat{s}},{\hat{s}}}P^k\|}\big(1-{\rm e}^{\frac{1}{2}\mu_{k}(t_2-t_1)}\big)\\
&&+\frac{N{\rm e}^{\Lambda|\hat{s}|}{\rm e}^{\frac{1}{2}\mu_{k}(t_2-\hat{s})}}{\|\Phi_{t_2-{\hat{s}},{\hat{s}}}P^k\|}\frac{\|\Phi_{t_1-{\hat{s}},{\hat{s}}}P^k-\Phi_{t_2-{\hat{s}},{\hat{s}}}P^k\|}{\|\Phi_{t_1-{\hat{s}},{\hat{s}}}P^k\|}\\
&\leq & \frac{N{\rm e}^{\Lambda|\hat{s}|}{\rm e}^{\frac{1}{2}\mu_{k}(t_1-\hat{s})}}{\|\Phi_{t_2-{\hat{s}},{\hat{s}}}P^k\|}\Big(\|\Phi_{t_2-{t_1},{t_1}}P^k-T_{t_2-{t_1},{t_1}}P^k\|+\big(1-{\rm e}^{\frac{1}{2}\mu_{k}(t_2-t_1)}\big)\|T_{t_2-{t_1},{t_1}}P^k\|\\
&&\quad +\|T_{t_2-{t_1},{t_1}}P^k-P^k\|\Big)\\
&:=& \frac{N{\rm e}^{\Lambda|\hat{s}|}{\rm e}^{\frac{1}{2}\mu_{k}(t_1-\hat{s})}}{\|\Phi_{t_2-{\hat{s}},{\hat{s}}}P^k\|}\big(\sum_{i=1}^3 I_i \big).
\end{eqnarray*}
Thus by substituting the inequality above into $T_{1,2}$ we have
\begin{eqnarray*}
T_{1,2}&\leq & 24\sum_{i=1}^3\sum_{k=m+1}^{\infty}\mathbb{E}\Big[|I_i|^2\int_{{\cal O}}\Big|\int_{-\infty}^{t_1}\frac{N{\rm e}^{\Lambda|\hat{s}|}{\rm e}^{\frac{1}{2}\mu_{k}(t_1-\hat{s})}}{\|\Phi_{t_2-{\hat{s}},{\hat{s}}}P^k\|} \Phi_{t_2-{\hat{s}},{\hat{s}}}P^k F({\hat{s}},Y^N(\hat{s},\cdot))(x)d{\hat{s}}\Big|^2dx\Big],
\end{eqnarray*}
where we can estimate the boundedness one by one. The term involving $I_1$ can be written as
\begin{eqnarray*}
&&24\sum_{k=m+1}^{\infty}\mathbb{E}\Big[|I_1|^2\int_{{\cal O}}\Big|\int_{-\infty}^{t_1}\frac{N{\rm e}^{\Lambda|\hat{s}|}{\rm e}^{\frac{1}{2}\mu_{k}(t_1-\hat{s})}}{\|\Phi_{t_2-{\hat{s}},{\hat{s}}}P^k\|} \Phi_{t_2-{\hat{s}},{\hat{s}}}P^k F({\hat{s}},Y^N(\hat{s},\cdot))(x)d{\hat{s}}\Big|^2dx\Big]\\
&\leq & CN^2\|F\|^2_{\infty}\sum_{k=m+1}^{\infty}\mathbb{E}
\|\Phi_{t_2-{t_1},{t_1}}P^k-T_{t_2-{t_1},{t_1}}P^k\|^2\Big(\int_{-\infty}^{t_1} {\rm e}^{\Lambda|\hat{s}|}{\rm e}^{\frac{1}{2}\mu_{k}(t_1-\hat{s})}d\hat{s}\Big)^2\\
&= & CN^2\|F\|^2_{\infty} e^{2\Lambda|t_1|}\sum_{k=m+1}^{\infty}\mathbb{E}
\|\Phi_{t_2-{t_1},{t_1}}P^k-T_{t_2-{t_1},{t_1}}P^k\|^2 e^{-2\Lambda|t_1|}\Big(\int_{-\infty}^{t_1} {\rm e}^{\Lambda|\hat{s}|}{\rm e}^{\frac{1}{2}\mu_{k}(t_1-\hat{s})}d\hat{s}\Big)^2\\
&\leq &CN^2\|F\|_{\infty}^2e^{2\Lambda|t_1|}\max\Big\{\sup_{k}e^{2\sigma_k^2  |t_2-t_1|},1\Big\}\Big(\sum_{k=1}^{\infty}\sigma_k^2 
\Big)\\
&&\hspace{0.3cm}\cdot\Big(\frac{1}{|\mu_{m+1}+2\Lambda|^2}+\frac{1}{|\mu_{m+1}-2\Lambda|^2}\Big)(|t_2-t_1|+|t_2-t_1|^2)
\end{eqnarray*}
where the last line comes from (\ref{LEMMA2C42}) and the trick (\ref{eqn:trick}). The term involving $I_2$ can be dealt with using trick (\ref{eqn:trick}):
\begin{eqnarray*}
&&24\sum_{k=m+1}^{\infty}\mathbb{E}\Big[|I_2|^2\int_{{\cal O}}\Big|\int_{-\infty}^{t_1}\frac{N{\rm e}^{\Lambda|\hat{s}|}{\rm e}^{\frac{1}{2}\mu_{k}(t_1-\hat{s})}}{\|\Phi_{t_2-{\hat{s}},{\hat{s}}}P^k\|} \Phi_{t_2-{\hat{s}},{\hat{s}}}P^k F({\hat{s}},Y^N(\hat{s},\cdot))(x)d{\hat{s}}\Big|^2dx\Big]\\
&\leq & \sum_{k=m+1}^{\infty}\dfrac{C N^2 e^{2\Lambda|t_1|}}{|\mu_k-2\Lambda|}\|T_{t_2-t_1}P^k\|^2\big(1-{\rm e}^{\frac{1}{2}\mu_{k}(t_2-t_1)}\big)^2\\
&&\quad \quad \quad \cdot \mathbb{E}\int_{{\cal O}}\int_{-\infty}^{t_1}e^{(\frac{1}{2}\mu_{k}-\Lambda)(t-\hat{s})}|F^k({\hat{s}},Y^N(\hat{s},\cdot))(x)|^2d\hat{s}dx\\
&&+\sum_{k=m+1}^{\infty}\dfrac{C N^2 e^{2\Lambda|t_1|}}{|\mu_k+2\Lambda|}\|T_{t_2-t_1}P^k\|^2\big(1-{\rm e}^{\frac{1}{2}\mu_{k}(t_2-t_1)}\big)^2\\
&&\quad \quad \quad \cdot\mathbb{E}\int_{{\cal O}}\int_{-\infty}^{t_1}e^{(\frac{1}{2}\mu_{k}+\Lambda)(t-\hat{s})}|F^k({\hat{s}},Y^N(\hat{s},\cdot))(x)|^2d\hat{s}dx\\
&\leq & C N^2 e^{2\Lambda|t_1|}\sum_{k=m+1}\bigg\{\dfrac{|\mu_{k}|^2}{|\mu_{k}-2\Lambda|}\int_{-\infty}^{t_1}e^{(\frac{1}{2}\mu_{k}-\Lambda)(t-\hat{s})}\mathbb{E}\int_{{\cal O}}^{\infty}|F^k({\hat{s}},Y^N(\hat{s},\cdot))(x)|^2dxd\hat{s}\\
&&+\dfrac{|\mu_{k}|^2}{|\mu_{k}+2\Lambda|}\int_{-\infty}^{t_1}e^{(\frac{1}{2}\mu_{k}+\Lambda)(t-\hat{s})}\mathbb{E}\int_{{\cal O}}\sum_{k=m+1}^{\infty}|F^k({\hat{s}},Y^N(\hat{s},\cdot))(x)|^2dxd\hat{s}\bigg\}|t_2-t_1|^2\\
&\leq & C N^2 e^{2\Lambda|t_1|}\|F\|^2_{\infty}\sup_{k}\Big(\dfrac{|\mu_{k}|^2}{|\mu_{k}-2\Lambda|^2}+\dfrac{|\mu_{k}|^2}{|\mu_{k}+2\Lambda|^2}\Big)|t_2-t_1|^2.
\end{eqnarray*}
The boundedness for the term involving $I_3$ is as follows
\begin{eqnarray*}
&&24\sum_{k=m+1}^{\infty}\mathbb{E}\Big[|I_3|^2\int_{{\cal O}}\Big|\int_{-\infty}^{t_1}\frac{N{\rm e}^{\Lambda|\hat{s}|}{\rm e}^{\frac{1}{2}\mu_{k}(t_1-\hat{s})}}{\|\Phi_{t_2-{\hat{s}},{\hat{s}}}P^k\|} \Phi_{t_2-{\hat{s}},{\hat{s}}}P^k F({\hat{s}},Y^N(\hat{s},\cdot))(x)d{\hat{s}}\Big|^2dx\Big]\\
&\leq &\sum_{k=m+1}^{\infty}\dfrac{64 N^2 e^{2\Lambda|t_1|}}{|\mu_k-2\Lambda|}\|T_{t_2-t_1}P^k-P^k\|^2\mathbb{E}\int_{{\cal O}}\int_{-\infty}^{t_1}e^{(\frac{1}{2}\mu_{m+1}-\Lambda)(t-\hat{s})}|F^k({\hat{s}},Y^N(\hat{s},\cdot))(x)|^2d\hat{s}dx\\
&&+\sum_{k=m+1}^{\infty}\dfrac{64 N^2 e^{2\Lambda|t_1|}}{|\mu_k+2\Lambda|}\|T_{t_2-t_1}P^k-P^k\|^2\mathbb{E}\int_{{\cal O}}\int_{-\infty}^{t_1}e^{(\frac{1}{2}\mu_{m+1}+\Lambda)(t-\hat{s})} |F^k({\hat{s}},Y^N(\hat{s},\cdot))(x)|^2d\hat{s}dx\\
&\leq& C N^2 e^{2\Lambda|t_1|}\bigg\{\sup_{k}\dfrac{|\mu_k|}{|\mu_k-2\Lambda|}\int_{-\infty}^{t_1}e^{(\frac{1}{2}\mu_{m+1}-\Lambda)(t-\hat{s})}\mathbb{E}\int_{{\cal O}}\sum_{k=m+1}^{\infty}|F^k({\hat{s}},Y^N(\hat{s},\cdot))(x)|^2dxd\hat{s}\\
&&+\sup_{k}\dfrac{|\mu_k|}{|\mu_k+2\Lambda|}\int_{-\infty}^{t_1}e^{(\frac{1}{2}\mu_{m+1}+\Lambda)(t-\hat{s})}\mathbb{E}\int_{{\cal O}}\sum_{k=m+1}^{\infty}|F^k({\hat{s}},Y^N(\hat{s},\cdot))(x)|^2dxd\hat{s}\bigg\}|t_2-t_1|\\
&\leq &CN^2\|F\|_{\infty}^2e^{2\Lambda|t_1|}\Big(\dfrac{1}{|\mu_{m+1}-2\Lambda|}+\dfrac{1}{|\mu_{m+1}+2\Lambda|}\Big)\sup_{k}\dfrac{|\mu_k|}{|\mu_k-2\Lambda|}|t_2-t_1|
\end{eqnarray*}
Totally we have
  \begin{eqnarray*}
T_1&\leq &CN^2\|F\|_{\infty}^2e^{2\Lambda|t_1|}\max\Big\{\sup_{k}e^{2\sigma_k^2  |t_2-t_1|},1\Big\}\Big(\sum_{k=1}^{\infty}\sigma_k^2 
\Big)\\
&&\quad\cdot\Big(\frac{1}{|\mu_{m+1}+2\Lambda|^2}+\frac{1}{|\mu_{m+1}-2\Lambda|^2}\Big)(|t_2-t_1|+|t_2-t_1|^2)
\\
&&+C N^2 e^{2\Lambda|t_1|}\|F\|^2_{\infty}\sup_{k}\Big(\dfrac{|\mu_{k}|^2}{|\mu_{k}-2\Lambda|^2}+\dfrac{|\mu_{k}|^2}{|\mu_{k}+2\Lambda|^2}\Big)|t_2-t_1|^2\\
&&+CN^2\|F\|_{\infty}^2e^{2\Lambda|t_1|}\Big(\dfrac{1}{|\mu_{m+1}-2\Lambda|}+\dfrac{1}{|\mu_{m+1}+2\Lambda|}\Big)\sup_{k}\dfrac{|\mu_k|}{|\mu_k-2\Lambda|}|t_2-t_1|.
\end{eqnarray*}
Similarly,
\begin{eqnarray*}
T_3&\leq &CN^2\|F\|_{\infty}^2e^{2\Lambda|t_1|}\max\Big\{\sup_{k}e^{2\sigma_k^2  |t_2-t_1|},1\Big\}\Big(\sum_{k=1}^{\infty}\sigma_k^2 
\Big)\\
&&\quad\cdot\Big(\frac{1}{|\mu_{m}+2\Lambda|^2}+\frac{1}{|\mu_{m}-2\Lambda|^2}\Big)(|t_2-t_1|+|t_2-t_1|^2)
\\
&&+C N^2 e^{2\Lambda|t_1|}\|F\|^2_{\infty}\sup_{k}\Big(\dfrac{|\mu_{k}|^2}{|\mu_{k}-2\Lambda|^2}+\dfrac{|\mu_{k}|^2}{|\mu_{k}+2\Lambda|^2}\Big)|t_2-t_1|^2\\
&&+CN^2\|F\|_{\infty}^2e^{2\Lambda|t_1|}\Big(\dfrac{1}{|\mu_{m}-2\Lambda|}+\dfrac{1}{|\mu_{m}+2\Lambda|}\Big)\sup_{k}\dfrac{|\mu_k|}{|\mu_k-2\Lambda|}|t_2-t_1|.
\end{eqnarray*}
  \item[(C)]  We show that ${\cal M}^N(Y^N)(t,\theta _{\pm\tau}\omega)={\cal M}^N(Y^N)(t\pm\tau,\omega)$:
  \begin{eqnarray*}
  {\cal M}^N(Y^N)(t,\theta_{\tau}\omega)&=&\int_{-\infty}^t\sum_{k=m+1}^{\infty}\Phi^N_{t-{\hat{s}},\hat{s}+\tau}P^kF({\hat{s}},Y^N({\hat{s}},\theta_{\tau}\omega))d{\hat{s}} \\
  & &-\int^{+\infty}_t\sum_{k=1}^{m}\Phi^N_{t-\hat{s},\hat{s}+\tau}P^kF(\hat{s},Y^N(\hat{s},\theta_{\tau}\omega))d\hat{s}\\
  &=&\int_{-\infty}^t\sum_{k=m+1}^{\infty}\Phi^N_{(t+\tau)-({\hat{s}}+\tau),\hat{s}+\tau}P^kF({\hat{s}}+\tau,Y^N({\hat{s}}+\tau,\omega))d{\hat{s}} \\
    & &-\int^{+\infty}_t\sum_{k=1}^{m}\Phi^N_{(t+\tau)-({\hat{s}}+\tau),\hat{s}+\tau}P^kF({\hat{s}}+\tau,Y^N({\hat{s}}+\tau,\omega))d{\hat{s}}\\
      &=&\int_{-\infty}^{t+\tau}\sum_{k=m+1}^{\infty}\Phi^N_{(t+\tau)-\hat{h},\hat{h}}P^kF(\hat{h},Y^N(\hat{h},\omega))d\hat{h} \\
        & &-\int^{+\infty}_{t+\tau}\sum_{k=1}^{m}\Phi^N_{(t+\tau)-\hat{h},\hat{h}}P^kF(\hat{h},Y^N(\hat{h},\omega))d\hat{h}\\
        &=&{\cal M}^N(Y^N)(t+\tau,\omega).
        \end{eqnarray*}
\end{enumerate}
Thus we have showed that ${\cal M}^N$ maps from $C^{\Lambda}_{\tau}(\mathbb{R}, L^2(\Omega\times {\cal O}))$ to itself.\\
{\it \textbf{Step 2}}: We need to prove that ${\cal M}^N$ mapping from $C^{\Lambda}_{\tau}(\mathbb{R}, L^2(\Omega\times {\cal O}))$ to $L_{\Lambda}^{\infty}(\mathbb{R}, L^2(\Omega, H_0^1({\cal O})))$.  
In fact for each $t$ and $\omega$ fixed, ${\cal M}^N(Y^N)(t,\omega,x)$ can be expressed as,
\begin{equation}\label{NJ520}
{\cal M}^N(Y^N)(t,\omega,x)=\sum_{i=1}^{\infty}\int_{{\cal O}}{\cal M}^N(Y^N)(t,\omega,y)\phi_i(y)dy\phi_i(x),
\end{equation} 
where $\phi_k\in H^1_0({\cal O})$, and we have
\begin{eqnarray*}
\nabla_x {\cal M}^N(Y^N)(t,\omega,x)&
=&\sum_{k=m+1}^{\infty}\int_{{\cal O}}\int_{-\infty}^t\Phi^N_{t-\hat{s},\hat{s}}P^kF(\hat{s},Y^N(\hat{s},\omega))(y)d\hat{s}\phi_k(y)dy\nabla_x\phi_k(x)\\
&&+\sum_{k=1}^{m}\int_{{\cal O}}\int^{+\infty}_t\Phi^N_{t-\hat{s},\hat{s}}P^kF(\hat{s},Y^N(\hat{s},\omega))(y)d\hat{s}\phi_k(y)dy\nabla_x\phi_k(x).\\
\end{eqnarray*}
Then we get
\begin{eqnarray*}
&&e^{-2\Lambda|t|}\E\int_{{\cal O}}|\nabla_x {\cal M}^N(Y^N)(t,\cdot,x)|^2dx\\
&\leq & 2e^{-2\Lambda|t|}\E\int_{{\cal O}}\Big|\sum_{k=m+1}^{\infty}\int_{{\cal O}}\int_{-\infty}^t\Phi^N_{t-\hat{s},\hat{s}}P^kF(\hat{s},Y^N(\hat{s},\cdot))(y)d\hat{s}\phi_k(y)dy\nabla_x\phi_k(x)\Big|^2dx\\
&&+2e^{-2\Lambda|t|}\E\int_{{\cal O}}\Big|\sum_{k=1}^{m}\int_{{\cal O}}\int^{\infty}_t\Phi^N_{t-\hat{s},\hat{s}}P^kF(\hat{s},Y^N(\hat{s},\cdot))(y)d\hat{s}\phi_k(y)dy\nabla_x\phi_k(x)\Big|^2dx\\
&=:&L_1+L_2.
\end{eqnarray*}
By Cauchy-Schwarz inequality, (\ref{eqn1.6}) and (\ref{PPk2}) we have
\begin{eqnarray*}
L_1&\leq & 2N^2e^{-2\Lambda|t|}\E\sum_{k,j=m+1}^{\infty}\Big(\int_{{\cal O}}|\nabla_x\phi_k(x)|^2dx\int_{{\cal O}}|\nabla_x\phi_j(x)|^2dx\Big)^{\frac{1}{2}} \\
&&\cdot\int_{-\infty}^t\int_{{\cal O}}e^{\frac{1}{2}\mu_k(t-\hat{s})}e^{\Lambda|\hat{s}|}|\phi_k(y)||F^k(\hat{s},Y^N(\hat{s},\cdot))(y)|dyd\hat{s}\\
&&\cdot\int_{-\infty}^t\int_{{\cal O}}e^{\frac{1}{2}\mu_j(t-\hat{s})}e^{\Lambda|\hat{s}|}|\phi_j(y)||F^j(\hat{s},Y^N(\hat{s},\cdot))(y)|dyd\hat{s}\\
&\leq & CN^2e^{-2\Lambda|t|}\E\Big[\sum_{k=m+1}^{\infty}\Big(\int_{-\infty}^t\int_{{\cal O}}e^{\frac{1}{2}\mu_k(t-\hat{s})}e^{\Lambda|\hat{s}|}\sqrt{|\mu_k|}|\phi_k(y)||F^k(\hat{s},Y^N(\hat{s},\cdot))(y)|dyd\hat{s}\Big)^{2} \\
&&\cdot\sum_{j=m+1}^{\infty}\Big(\int_{-\infty}^t\int_{{\cal O}}e^{\frac{1}{2}\mu_j(t-\hat{s})}e^{\Lambda|\hat{s}|}\sqrt{|\mu_j|}|\phi_j(y)||F^j(\hat{s},Y^N(\hat{s},\cdot))(y)|dyd\hat{s}\Big)^{2}\Big]^{\frac{1}{2}}\\
&\leq & CN^2e^{-2\Lambda|t|}\E\Big[\sum_{k=m+1}^{\infty}\Big(\int_{-\infty}^t\int_{{\cal O}}e^{\mu_k(t-\hat{s})}|\mu_k||\phi_k(y)|^2dyd\hat{s} \\
& & \cdot \int_{-\infty}^t\int_{{\cal O}}e^{\frac{1}{2}\mu_{m+1}(t-\hat{s})}e^{2\Lambda|\hat{s}|}|F^k(\hat{s},Y^N(\hat{s},\cdot))(y)|^2dyd\hat{s}\Big) \\
&&\cdot\sum_{j=m+1}^{\infty}\Big(\int_{-\infty}^t\int_{{\cal O}}e^{\mu_j(t-\hat{s})}|\mu_j||\phi_j(y)|^2dyd\hat{s}\\
&&\cdot \int_{-\infty}^t\int_{{\cal O}}e^{\frac{1}{2}\mu_{m+1}(t-\hat{s})}e^{2\Lambda|\hat{s}|}|F^j(\hat{s},Y^N(\hat{s},\cdot))(y)|^2dyd\hat{s}\Big)\Big]^{\frac{1}{2}}\\
&\leq & CN^2\|F\|^2_{\infty}\big(\frac{1}{|\mu_{m+1}-2\Lambda|}+\frac{1}{|\mu_{m+1}+2\Lambda|}\big)<\infty.
\end{eqnarray*}
Similarly,
$$L_2\leq CN^2\|F\|^2_{\infty}\big(\frac{1}{|\mu_{m}-2\Lambda|}+\frac{1}{|\mu_{m}+2\Lambda|}\big)<\infty.$$
{\it \textbf{Step 3}}: We now check the continuity of the map ${\cal M}^N$ in $C^{\Lambda}_{\tau}(\mathbb{R}, L^2(\Omega\times {\cal O}))$. Consider $Y_1^N,Y_2^N\in
C^{\Lambda}_{\tau}(\mathbb{R}, L^2(\Omega\times {\cal O}))$,  and $t\in [j\tau,(j+1)\tau)$, $j\in \mathbb{Z}$, then
\begin{eqnarray*}
&&e^{-2\Lambda|t|}\mathbb{E}\int_{{\cal O}}|{\cal M}^N(Y^N_1)(t,\cdot, x)-{\cal M}(Y^N_2)(t,\cdot, x)|^2dx\\
&\leq &2e^{-2\Lambda|t|}\mathbb{E}\int_{{\cal O}}\Big|\int_{-\infty}^t\sum_{k=m+1}^{\infty}\Phi^N_{t-\hat{s},\hat{s}}P^kF(\hat{s},Y_1^N)(x)d\hat{s}-\int_{-\infty}^t\sum_{k=m+1}^{\infty}\Phi^N_{t-\hat{s},\hat{s}}P^kF(\hat{s},Y_2^N)(x)d\hat{s}\Big|^2dx\\
&&+2e^{-2\Lambda|t|}\mathbb{E}\int_{{\cal O}}\Big|\int^{+\infty}_t\sum_{k=1}^{m}\Phi^N_{t-\hat{s},\hat{s}}P^kF(\hat{s},Y_1^N)(x)d\hat{s}-\int^{+\infty}_t\sum_{k=1}^{m}\Phi^N_{t-\hat{s},\hat{s}}P^kF(\hat{s},Y_2^N)(x)d\hat{s}\Big|^2dx,\\
&=:&U_1 +U_2.
\end{eqnarray*}
First
\begin{eqnarray*}
U_1&\leq &2e^{-2\Lambda|t|}\mathbb{E}\sum_{k=m+1}^{\infty}\int_{{\cal O}}\Big(\int_{-\infty}^t\|\Phi^N_{t-\hat{s},\hat{s}}P^k\||F^k(\hat{s},Y_1^N)(x)-F^k(\hat{s},Y_2^N)(x)|d\hat{s}\Big)^2dx\\
&\leq &2N^2e^{-2\Lambda|t|}\mathbb{E}\sum_{k=m+1}^{\infty}\int_{{\cal O}}\Big(\int_{-\infty}^te^{\frac{1}{2}\mu_{k}(t-\hat{s})}e^{\Lambda|\hat{s}|}|F^k(\hat{s},Y_1^N)(x)-F^k(\hat{s},Y_2^N)(x)|d\hat{s}\Big)^2dx\\
&\leq & 4N^2\mathbb{E}\sum_{k=m+1}^{\infty}\int_{{\cal O}}\Big(\int_{-\infty}^te^{(\frac{1}{2}\mu_{k}-\Lambda)(t-\hat{s})}|F^k(\hat{s},Y_1^N)(x)-F^k(\hat{s},Y_2^N)(x)|d\hat{s}\Big)^2dx\\
&&+4N^2\mathbb{E}\sum_{k=m+1}^{\infty}\int_{{\cal O}}\Big(\int_{-\infty}^te^{(\frac{1}{2}\mu_{k}+\Lambda)(t-\hat{s})}|F^k(\hat{s},Y_1^N)(x)-F^k(\hat{s},Y_2^N)(x)|d\hat{s}\Big)^2dx\\
&\leq &\frac{8N^2}{|\mu_{m+1}-2\Lambda|}\int_{-\infty}^te^{(\frac{1}{2}\mu_{m+1}-\Lambda)(t-\hat{s})}\mathbb{E}\sum_{k=m+1}^{\infty}\int_{{\cal O}}|F^k(\hat{s},Y_1^N)(x)-F^k(\hat{s},Y_2^N)(x)|^2dxd\hat{s}\\
&&+\frac{8N^2}{|\mu_{m+1}-2\Lambda|}\int_{-\infty}^te^{(\frac{1}{2}\mu_{m+1}+\Lambda)(t-\hat{s})}\mathbb{E}\sum_{k=m+1}^{\infty}\int_{{\cal O}}|F^k(\hat{s},Y_1^N)(x)-F^k(\hat{s},Y_2^N)(x)|^2dxd\hat{s}.
\end{eqnarray*}
Since the term $\mathbb{E}\sum_{k=m+1}^{\infty}\int_{{\cal O}}|F^k(\hat{s},Y_1^N)(x)-F^k(\hat{s},Y_2^N)(x)|^2dx$ is periodic in time, we can approach the following boundedness by the similar arguments in \cite{feng-wu-zhao}, i.e.,
\begin{eqnarray*}
U_1\leq e^{2\Lambda\tau}\Big(\frac{16N^2\|\nabla  F\|_{\infty}^2}{|\mu_{m+1}+2\Lambda|^2}+\frac{16N^2\|\nabla  F\|_{\infty}^2}{|\mu_{m+1}-2\Lambda|^2}\Big)\sup_{s\in \mathbb{R}}e^{-2\Lambda|\hat{s}|}\mathbb{E}\int_{{\cal O}}|Y_1^N(\hat{s},\cdot,x)-Y_2^N(\hat{s},\cdot,x)|^2dx,
\end{eqnarray*}
where
$$||\nabla  F||_\infty:=\sup_{t\in\mathbb{R}, x\in \mathbb{R}}|\nabla  F(t,x)|^2.$$
Analogously,
$$U_2\leq e^{2\Lambda\tau}\Big(\frac{16N^2\|\nabla  F\|_{\infty}^2}{|\mu_{m}+2\Lambda|^2}+\frac{16N^2\|\nabla  F\|_{\infty}^2}{|\mu_{m}-2\Lambda|^2}\Big)\sup_{s\in \mathbb{R}}e^{-2\Lambda|\hat{s}|}\mathbb{E}\int_{{\cal O}}|Y_1^N(\hat{s},\cdot,x)-Y_2^N(\hat{s},\cdot,x)|^2dx.$$
Thus the claim that ${\cal M}^N: C_{\tau}(\mathbb{R}, L^2(\Omega\times {\cal O}))\to
C_{\tau}(\mathbb{R}, L^2(\Omega\times {\cal O}))$ is a continuous map has been asserted.  
\end{proof}

\begin{rmk}
In Step 1 (B), we make complicated computation via (\ref{eqn:PhiT}) because splitting the terms in (\ref{eqn:PhiT}) together with estimation in (\ref{eqn:T_1,1}) avoids stronger assumption like $\sum_{k=1}^\infty \frac{1}{|\mu_k+2\Lambda|}<\infty$.
\end{rmk}
Before introducing the subset of $C^{\Lambda}_{\tau}(\mathbb{R}, L^2(\Omega\times{\cal O}))$ required in Schauder's fixed point Theorem, some calculation of the Malliavin derivatives of $\Phi^NP^j$ for each $j\in \mathbb{N}$ are needed.
\begin{lem} Given $\Phi^NP^j$ defined in (\ref{PHI+B1}) and (\ref{PHI-B1}), then for any $v\in L^2({\cal O})$,  $j\geq m+1$ and $t\geq 0$, the Malliavin derivative of ${\cal D}_r^j\Phi^NP^k$ is given by
\begin{eqnarray}\label{441}
&&{\cal D}^j_r\Phi^N(t,\theta_{\hat{s}}\omega)P^j(v)(x)\nonumber\\
&=&\sigma_j \chi_{[\hat{s}, t+\hat{s}]}(r)\min\left\{1,  \frac{Ne^{\frac{1}{2}\mu_{j}t}e^{\Lambda|s|}}{\| \Phi(t,\theta_{\hat{s}}\omega)P^{j}\|}\right\}\Phi(t,\theta_{\hat{s}}\omega)P^j(v)(x)\nonumber\\
&&-\sigma_j \chi_{[\hat{s}, t+\hat{s}]}(r)\chi_{\{Ne^{\frac{1}{2}\mu_{j}t}e^{\Lambda|s|}<\| \Phi(t,\theta_{\hat{s}}\omega)P^{j}\|\}}(\omega) \frac{Ne^{\frac{1}{2}\mu_{j}t}e^{\Lambda|s|}}{\| \Phi(t,\theta_{\hat{s}}\omega)P^{j}\|}\Phi(t,\theta_{\hat{s}}\omega)P^j(v)(x),\nonumber\\
\end{eqnarray}
and when $k\neq j$, $t\geq 0$, 
\begin{eqnarray}\label{441'}
{\cal D}^j_r\Phi^N(t,\theta_{\hat{s}}\omega)P^k(v)(x)=0.
\end{eqnarray}
And for $j\leq m$, $t\leq 0$, 
\begin{eqnarray}\label{442}
&&{\cal D}^j_r\Phi^N(t,\theta_{\hat{s}}\omega)P^j(v)(x)\nonumber\\
&=&-\sigma_j \chi_{[\hat{s}, t+\hat{s}]}(r)\min\Big\{1,  \frac{Ne^{\frac{1}{2}\mu_{j}t}e^{\Lambda|s|}}{\| \Phi(t,\theta_{\hat{s}}\omega)P^{j}\|}\Big\}\Phi(t,\theta_{\hat{s}}\omega)P^j(v)(x)\nonumber\\
&&+\sigma_j \chi_{[\hat{s}, t+\hat{s}]}(r)\chi_{\{Ne^{\frac{1}{2}\mu_{j}t}e^{\Lambda|s|}<\| \Phi(t,\theta_{\hat{s}}\omega)P^{j}\|\}}(\omega)\frac{Ne^{\frac{1}{2}\mu_{j}t}e^{\Lambda|s|}}{\| \Phi(t,\theta_{\hat{s}}\omega)P^{j}\|}\Phi(t,\theta_{\hat{s}}\omega)P^j(v)(x), \nonumber\\
\end{eqnarray}
and when $k\neq j$, $t\leq 0$,
\begin{eqnarray}\label{442'}
{\cal D}^j_r\Phi^N(t,\theta_{\hat{s}}\omega)P^k(v)(x)=0.
\end{eqnarray}
Moreover, we have the following estimate
\begin{equation}\label{44444}
\|{\cal D}^j_{r}\Phi^N(t,\theta_{\hat{s}}\omega)P^j\|\leq 2\sigma_j Ne^{\frac{1}{2}\mu_jt}e^{\Lambda|\hat{s}|}.
\end{equation} 
\end{lem}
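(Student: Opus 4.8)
The plan is to exploit the explicit diagonal representation of the evolution operator restricted to a single eigenmode. Recall from (\ref{LMphi}) and the norm computation in the proof of Lemma \ref{ExponentialDichominy} that, writing $\theta_{\hat{s}}W^j_t=W^j_{t+\hat{s}}-W^j_{\hat{s}}$, one has $\Phi(t,\theta_{\hat{s}}\omega)P^j(v)(x)=e^{\mu_j t+\sigma_j(W^j_{t+\hat{s}}-W^j_{\hat{s}})}v^j(x)$ and $\|\Phi(t,\theta_{\hat{s}}\omega)P^j\|=e^{\mu_j t+\sigma_j(W^j_{t+\hat{s}}-W^j_{\hat{s}})}$, since $\|\phi_j\|_{\mathbb H}=1$. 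The crucial structural fact is that both the operator $\Phi(t,\theta_{\hat{s}}\omega)P^j$ and its norm depend on the randomness only through the single increment $W^j_{t+\hat{s}}-W^j_{\hat{s}}$ of the $j$-th Brownian motion. Consequently ${\cal D}^j_r$ annihilates $\Phi(t,\theta_{\hat{s}}\omega)P^k$ whenever $k\neq j$, which yields (\ref{441'}) and (\ref{442'}) at once and reduces the whole computation to the scalar problem on the $j$-th mode.

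Next I would compute the Malliavin derivative of the building block. For $t\geq 0$ the increment satisfies $W^j_{t+\hat{s}}-W^j_{\hat{s}}=\int_{\mathbb R}\chi_{[\hat{s},t+\hat{s}]}(r)\,dW^j_r$, so ${\cal D}^j_r(W^j_{t+\hat{s}}-W^j_{\hat{s}})=\chi_{[\hat{s},t+\hat{s}]}(r)$, whence ${\cal D}^j_r\Phi(t,\theta_{\hat{s}}\omega)P^j(v)=\sigma_j\chi_{[\hat{s},t+\hat{s}]}(r)\Phi(t,\theta_{\hat{s}}\omega)P^j(v)$ and likewise ${\cal D}^j_r\|\Phi(t,\theta_{\hat{s}}\omega)P^j\|=\sigma_j\chi_{[\hat{s},t+\hat{s}]}(r)\|\Phi(t,\theta_{\hat{s}}\omega)P^j\|$. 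For $t\leq 0$ the same Wiener-integral representation holds with $\chi_{[\hat{s},t+\hat{s}]}$ read as the indicator of the interval whose endpoints are $\hat{s}$ and $t+\hat{s}$, but the reversed orientation of the increment flips the sign of its kernel, producing the overall minus sign that distinguishes (\ref{442}) from (\ref{441}).

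Then I would differentiate the truncated operator via the product and chain rules. Writing $A:=Ne^{\frac12\mu_j t}e^{\Lambda|\hat{s}|}$ (deterministic) and $G:=\|\Phi(t,\theta_{\hat{s}}\omega)P^j\|$, we have $\Phi^N(t,\theta_{\hat{s}}\omega)P^j=\min\{1,A/G\}\,\Phi(t,\theta_{\hat{s}}\omega)P^j$. The product rule gives the undifferentiated-factor term $\sigma_j\chi_{[\hat{s},t+\hat{s}]}(r)\min\{1,A/G\}\,\Phi(t,\theta_{\hat{s}}\omega)P^j(v)$, matching the first line of (\ref{441}). For the remaining term I apply the chain rule to the Lipschitz map $g\mapsto\min\{1,A/g\}$; since $A$ is deterministic, ${\cal D}^j_r\min\{1,A/G\}=-\chi_{\{A<G\}}\,(A/G^2)\,{\cal D}^j_rG=-\chi_{\{A<G\}}\,(A/G)\,\sigma_j\chi_{[\hat{s},t+\hat{s}]}(r)$, and multiplying by $\Phi(t,\theta_{\hat{s}}\omega)P^j(v)$ reproduces the second line of (\ref{441}); the case $t\leq 0$ is identical up to the sign noted above and yields (\ref{442}).

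Finally, the estimate (\ref{44444}) follows by bounding the two terms of (\ref{441}) separately: since $\min\{1,A/G\}\,\|\Phi(t,\theta_{\hat{s}}\omega)P^j\|=\min\{G,A\}\leq A$, the first term has operator norm at most $\sigma_j A$, while on the event $\{A<G\}$ the factor $(A/G)\,\|\Phi(t,\theta_{\hat{s}}\omega)P^j\|=A$ bounds the second term by $\sigma_j A$ as well; adding them gives $\|{\cal D}^j_r\Phi^N(t,\theta_{\hat{s}}\omega)P^j\|\leq 2\sigma_j A=2\sigma_j Ne^{\frac12\mu_j t}e^{\Lambda|\hat{s}|}$. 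The main delicate point is the chain rule in the third step: the truncation $\min\{1,\cdot\}$ is only Lipschitz, so one must invoke the extension of Malliavin differentiability to Lipschitz functions (the a.e. derivative of $\min\{1,x\}$ being $\chi_{\{x<1\}}$) and check that the boundary event $\{A=G\}$ is negligible, which holds because for $\sigma_j>0$ the variable $G$ has a continuous log-normal law, so $\mathbb P(A=G)=0$, while for $\sigma_j=0$ there is no dependence on $W^j$ and the derivative vanishes trivially.
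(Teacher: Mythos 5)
Your proposal is correct and follows essentially the same route as the paper's proof: both use the explicit single-mode representation to compute ${\cal D}^j_r$ of $\Phi(t,\theta_{\hat{s}}\omega)P^j$ and of its norm (with the sign flip for $t\leq 0$ and the vanishing for $k\neq j$), then apply the product/chain rule to $\min\{1,A/G\}$ to obtain the two terms of (\ref{441}) and (\ref{442}), and finally bound each term by $\sigma_j A$ to get (\ref{44444}). Your explicit treatment of the Lipschitz chain rule and of the negligibility of the boundary event $\{A=G\}$ is a point the paper passes over silently, and is a welcome addition rather than a deviation.
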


\begin{proof}
 Firstly note that for any $v\in L^2({\cal O})$ we have for $j\geq m+1$ and $t\geq 0$,
\begin{eqnarray}\label{MD1}
{\cal D}^j_r\Phi(t,\theta_{\hat{s}}\omega)P^j(v)(x)&=&{\cal D}^j_re^{\mu_jt+\sigma_j (W^j_{\hat{s}+t}-W^j_{\hat{s}})}\langle\phi_j(\cdot), v(\cdot)\rangle \phi_j(x)\nonumber \\
&=&  \sigma_j \chi_{[\hat{s}, t+\hat{s}]}(r)e^{\mu_jt+\sigma_j (W^j_{\hat{s}+t}-W^j_{\hat{s}})}\langle\phi_j(\cdot), v(\cdot)\rangle \phi_j(x)\nonumber\\
&=&\sigma_j \chi_{[\hat{s}, t+\hat{s}]}(r)\Phi(t,\theta_{\hat{s}}\omega)P^j(v)(x),
\end{eqnarray}
and when $k\neq j$, 
\begin{eqnarray}\label{MD1'}
{\cal D}^j_r\Phi(t,\theta_{\hat{s}}\omega)P^k(v)(x)&=&{\cal D}^j_re^{\mu_kt+\sigma_k (W^k_{\hat{s}+t}-W^k_{\hat{s}})}\langle\phi_k(\cdot), v(\cdot)\rangle \phi_k(x)=0.
\end{eqnarray}
Moreover, 
\begin{eqnarray}\label{MD3}
{\cal D}^j_r\|\Phi(t,\theta_{\hat{s}}\omega)P^j\|={\cal D}^j_re^{\frac{1}{2}\mu_jt+\sigma_j (W^j_{\hat{s}+t}-W^j_{\hat{s}})}=\sigma_j \chi_{[\hat{s}, t+\hat{s}]}(r)\|\Phi(t,\theta_{\hat{s}}\omega)P^j\|,
\end{eqnarray}
and when $k\neq j$, 
\begin{eqnarray}\label{MD3'}
{\cal D}^j_r\|\Phi(t,\theta_{\hat{s}}\omega)P^k\|=0.
\end{eqnarray}
Analogously, when $j\leq m$ and $t\leq 0$,
\begin{equation}\label{MD2}
{\cal D}^j_r\Phi(t,\theta_{\hat{s}}\omega)P^j(v)(x)=-\sigma_j \chi_{[\hat{s}, t+\hat{s}]}(r)\Phi(t,\theta_{\hat{s}}\omega)P^j(v)(x),
\end{equation}
and
\begin{eqnarray}\label{MD4}
{\cal D}^j_r\|\Phi(t,\theta_{\hat{s}}\omega)P^j\|&=&-\sigma_j \chi_{[\hat{s}, t+\hat{s}]}(r)\|\Phi(t,\theta_{\hat{s}}\omega)P^j\|.
\end{eqnarray}
And when $k\neq j$, 
\begin{eqnarray}\label{MD2'}
{\cal D}^j_r\Phi(t,\theta_{\hat{s}}\omega)P^k(v)(x)=0,
\end{eqnarray}
and
\begin{eqnarray}\label{MD4'}
{\cal D}^j_r\|\Phi(t,\theta_{\hat{s}}\omega)P^k\|=0.
\end{eqnarray}
Then we are able to calculate the Malliavin derivatives of $\Phi^N$ by the chain rule using (\ref{MD1}) - (\ref{MD4'}): for $j\geq m+1$ and $t\geq 0$,
\begin{eqnarray*}
&&{\cal D}^j_r\Phi^N(t,\theta_{\hat{s}}\omega)P^j(v)(x)\nonumber\\
&=&
{\cal D}^j_r\left(\min\left\{1,  \frac{Ne^{\frac{1}{2}\mu_{j}t}e^{\Lambda|s|}}{\| \Phi(t,\theta_{\hat{s}}\omega)P^{j}\|}\right\}\Phi(t,\theta_{\hat{s}}\omega)P^{j}(v)(x)\right)\nonumber\\
&=&
\min\left\{1,  \frac{Ne^{\frac{1}{2}\mu_{j}t}e^{\Lambda|s|}}{\| \Phi(t,\theta_{\hat{s}}\omega)P^{j}\|}\right\}{\cal D}_r^j\big(\Phi(t,\theta_{\hat{s}}\omega)P^j(v)(x)\big)+{\cal D}_r^j\min\left\{1,  \frac{Ne^{\frac{1}{2}\mu_{j}t}e^{\Lambda|s|}}{\| \Phi(t,\theta_{\hat{s}}\omega)P^{j}\|}\right\}\Phi(t,\theta_{\hat{s}}\omega)P^j(v)(x)\nonumber\\
&=&\min\left\{1,  \frac{Ne^{\frac{1}{2}\mu_{j}t}e^{\Lambda|s|}}{\| \Phi(t,\theta_{\hat{s}}\omega)P^{j}\|}\right\}\sigma_j \chi_{[ \hat{s}, t+\hat{s}]}(r)\Phi(t,\theta_{\hat{s}}\omega)P^j(v)(x)\nonumber\\
&&-\chi_{\{Ne^{\frac{1}{2}\mu_{j}t}e^{\Lambda|s|}<\| \Phi(t,\theta_{\hat{s}}\omega)P^{j}\|\}}(\omega)\Phi(t,\theta_{\hat{s}}\omega)P^j(v)(x) \frac{Ne^{\frac{1}{2}\mu_{j}t}e^{\Lambda|s|}}{\| \Phi(t,\theta_{\hat{s}}\omega)P^{j}\|^2}{\cal D}_r^j\| \Phi(t,\theta_{\hat{s}}\omega)P^{j}\|\nonumber\\
&=&\sigma_j \chi_{[ \hat{s}, t+\hat{s}]}(r)\min\left\{1,  \frac{Ne^{\frac{1}{2}\mu_{j}t}e^{\Lambda|s|}}{\| \Phi(t,\theta_{\hat{s}}\omega)P^{j}\|}\right\}\Phi(t,\theta_{\hat{s}}\omega)P^j(v)(x)\nonumber\\
&&-\sigma_j \chi_{[ \hat{s}, t+\hat{s}]}(r)\chi_{\{Ne^{\frac{1}{2}\mu_{j}t}e^{\Lambda|s|}<\| \Phi(t,\theta_{\hat{s}}\omega)P^{j}\|\}}(\omega) \frac{Ne^{\frac{1}{2}\mu_{j}t}e^{\Lambda|s|}}{\| \Phi(t,\theta_{\hat{s}}\omega)P^{j}\|}\Phi(t,\theta_{\hat{s}}\omega)P^j(v)(x),
\end{eqnarray*}
and when $k\neq j$, it is obvious that
\begin{eqnarray*}
{\cal D}^j_r\Phi^N(t,\theta_{\hat{s}}\omega)P^k(v)(x)=0.
\end{eqnarray*}
Analogously, for $j\leq m$ with $t\leq 0$, we have
\begin{eqnarray*}
&&{\cal D}^j_r\Phi^N(t,\theta_{\hat{s}}\omega)P^j(v)(x)\nonumber\\
&=&-\sigma_j \chi_{[ \hat{s}, t+\hat{s}]}(r)\min\Big\{1,  \frac{Ne^{\frac{1}{2}\mu_{j}t}e^{\Lambda|s|}}{\| \Phi(t,\theta_{\hat{s}}\omega)P^{j}\|}\Big\}\Phi(t,\theta_{\hat{s}}\omega)P^j(v)(x)\nonumber\\
&&+\sigma_j \chi_{[ \hat{s}, t+\hat{s}]}(r)\chi_{\{Ne^{\frac{1}{2}\mu_{j}t}e^{\Lambda|s|}<\| \Phi(t,\theta_{\hat{s}}\omega)P^{j}\|\}}(\omega)\frac{Ne^{\frac{1}{2}\mu_{j}t}e^{\Lambda|s|}}{\| \Phi(t,\theta_{\hat{s}}\omega)P^{j}\|}\Phi(t,\theta_{\hat{s}}\omega)P^j(v)(x),
\end{eqnarray*}
and when $k\neq j$, it is obvious that
\begin{eqnarray*}
{\cal D}^j_r\Phi^N(t,\theta_{\hat{s}}\omega)P^k(v)(x)=0.
\end{eqnarray*}
Finally from  (\ref{441}) and (\ref{442}), it is easy to obtain that when $j\geq m+1$ and $t\geq 0$ or $j\leq m$ and $t\leq 0$
\begin{equation*}
\|{\cal D}^j_{r}\Phi^N(t,\theta_{\hat{s}}\omega)P^j\|\leq 2\sigma_j Ne^{\frac{1}{2}\mu_jt}e^{\Lambda|\hat{s}|}.
\end{equation*}  
\end{proof}

Next introduce a subset of $C^{\Lambda}_{\tau}(\mathbb{R}, L^2(\Omega\times{\cal O}))$ as follows,
\begin{eqnarray*}
C^{\Lambda,N}_{\tau,\rho}(\mathbb{R},L^2({\cal O},{\cal D}^{1,2}))&:=\bigg\{& f\in C^{\Lambda}_{\tau}(\mathbb{R}, L^2(\Omega\times{\cal O})):\ f|_{[0,\tau)}\in C([0,\tau),L^2({\cal O},{\cal D}^{1,2})), \\
&&  \mbox{and}\ {\forall} t\in [0,\tau),\ e^{-2\Lambda|t|}\sum_{j=1}^{\infty}\int_{\mathbb{R}}\mathbb{E}\int_{{\cal O}}|{\cal D}_{r}^jf(t,\cdot,x)|^2dxdr\leq \rho^N(t),\\
&&\sup\limits_{\stackrel{t\in [0,\tau)}{\delta\in \mathbb{R}}} e^{-2\Lambda|t|}\sum_{j=1}^{\infty}\dfrac{1}{|\delta|}\int_{\mathbb{R}}\mathbb{E}\int_{{\cal O}}|{\cal D}_{r+\delta}^jf(t,\cdot,x)-{\cal D}_{r}^jf(t,\cdot,x)|^2dxdr< \infty  \nonumber\bigg\}.
\end{eqnarray*}
Here
\begin{equation}\label{NBJIHU}
\rho^N(t):=K^N_1\int_{0}^{\tau}e^{-\frac{1}{2}\mu|t-\hat{s}|}\rho^N(\hat{s})d\hat{s}+K^N_2,
\end{equation}
where 
$$K^N_1:=12N^2\|\nabla F\|^2_{\infty}e^{2\Lambda \tau}\Big(\dfrac{\sum_{i=-1}^{\infty}e^{-\frac{1}{2}\mu_{m}i\tau}}{|\mu_{m}-4\Lambda|}+\dfrac{\sum_{i=-1}^{\infty}e^{-\frac{1}{2}\mu_{m}i\tau}}{|\mu_{m}+4\Lambda|}+\dfrac{\sum_{i=-1}^{\infty}e^{\frac{1}{2}\mu_{m+1}i\tau}}{|\mu_{m+1}-4\Lambda|}+\dfrac{\sum_{i=-1}^{\infty}e^{\frac{1}{2}\mu_{m+1}i\tau}}{|\mu_{m+1}+4\Lambda|}\Big),$$
$$K^N_2:=96\|F\|^2_{\infty}\sum_{j=1}^{\infty}\sigma_j^2 \Big(\dfrac{1}{|\mu_{m}+2\Lambda|^3}+\dfrac{1}{|\mu_{m}-2\Lambda|^3}+\dfrac{1}{|\mu_{m+1}+2\Lambda|^3}+\dfrac{1}{|\mu_{m+1}-2\Lambda|^3}\Big).$$
We can prove that
\begin{lem}\label{LEMMA4C4}Under the conditions of Theorem \ref{THMC4M}, we have
$${\cal M}^N:C^{\Lambda,N}_{\tau,\rho}(\mathbb{R},L^2({\cal O},{\cal D}^{1,2})) \subset C^{\Lambda,N}_{\tau,\rho}(\mathbb{R},L^2({\cal O},{\cal D}^{1,2})),$$
and ${\cal M}^N(C^{\Lambda,N}_{\tau,\rho}(\mathbb{R},L^2({\cal O},{\cal D}^{1,2})))|_{[0,\tau)}$ is relatively compact in $C_{\tau}([0,\tau),L^2(\Omega\times{\cal O}))$.
\end{lem}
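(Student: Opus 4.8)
The proof has two parts: that ${\cal M}^N$ maps $C^{\Lambda,N}_{\tau,\rho}(\mathbb{R},L^2({\cal O},{\cal D}^{1,2}))$ into itself, and that the image of the whole set, restricted to $[0,\tau)$, is relatively compact. I would treat them in this order.

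\emph{Self-mapping.} From Lemma \ref{LEMMA3C4} the operator ${\cal M}^N$ already carries $C^{\Lambda}_{\tau}(\mathbb{R}, L^2(\Omega\times{\cal O}))$ into itself and into $L^\infty_\Lambda(\mathbb{R}, L^2(\Omega, H^1_0({\cal O})))$, so the only new points are the two Malliavin constraints in the definition of $C^{\Lambda,N}_{\tau,\rho}$. Differentiating (\ref{eqn:M}) in the Malliavin sense and using the product rule together with the explicit formulas (\ref{441})--(\ref{442'}), I would split ${\cal D}^j_r{\cal M}^N(f)$ into a term $A$, where the derivative falls on $\Phi^N P^k$ (which by (\ref{441'}) and (\ref{442'}) survives only for $k=j$), and a term $B$, where it falls on $F(\hat s,f)$. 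For $B$ the chain rule applies because $\nabla F$ is globally bounded, hence $F(\hat s,\cdot)$ is globally Lipschitz and $F(\hat s,f)\in{\cal D}^{1,2}$ whenever $f\in{\cal D}^{1,2}$, giving the factor $\nabla F(\hat s,f)\,{\cal D}^j_r f$.

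For term $A$ I would use the pointwise bound (\ref{44444}) and $\|F\|_\infty$; after squaring, integrating over $\Omega\times{\cal O}$, summing over $j$, and evaluating the nested exponential integrals in $\hat s$ and $r$ (which produce the cubic powers $|\mu_{m}\pm2\Lambda|^{-3}$, $|\mu_{m+1}\pm2\Lambda|^{-3}$), Condition (B) supplies $\sum_j\sigma_j^2<\infty$ and yields exactly the constant $K^N_2$. For term $B$ I would use $\|\nabla F\|_\infty$ together with the inductive hypothesis $\sum_j\int_{\mathbb{R}}\mathbb{E}\int_{{\cal O}}|{\cal D}^j_r f(\hat s)|^2\le e^{2\Lambda|\hat s|}\rho^N(\hat s)$ built into the set, and then exploit the $\tau$-periodicity of both $f$ and $\rho^N$ to fold the $\hat s$-integral over $\mathbb{R}$ into the fundamental domain $[0,\tau)$; the geometric series $\sum_{i=-1}^{\infty}e^{-\frac12\mu_m i\tau}$ and $\sum_{i=-1}^{\infty}e^{\frac12\mu_{m+1} i\tau}$ appearing in $K^N_1$ are precisely the sums over periods. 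This gives the bound $K^N_1\int_0^\tau e^{-\frac12\mu|t-\hat s|}\rho^N(\hat s)\,d\hat s$. Adding the two contributions and invoking the defining equation (\ref{NBJIHU}) for $\rho^N$ shows the image again satisfies the $\rho^N$-bound. The equicontinuity constraint is read off the same derivative formula: the shift $r\mapsto r+\delta$ only translates the indicator windows (of length $|t-\hat s|$, with one endpoint at $\hat s$) in (\ref{441})--(\ref{442}), so the squared difference is controlled linearly in $|\delta|$.

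\emph{Relative compactness.} For the second claim I would apply Theorem \ref{B-S2} on $[0,\tau)$ to the image family and verify its hypotheses one by one. Condition (1) is exactly Step 2 of Lemma \ref{LEMMA3C4}; Condition (2) follows from the $\rho^N$-bound together with the $L^2$-bound, using that $\rho^N$ is bounded on the compact interval $[0,\tau)$; Condition (3) is the time-continuity estimate for $T_1,\dots,T_4$ of Step 1(B) of Lemma \ref{LEMMA3C4}. Condition (4i) follows by bounding ${\cal D}_{\theta+h}-{\cal D}_\theta$ of the explicit derivative by the combination of the equicontinuity already imposed on the input set and the modulus of continuity of the windows, giving order $|h|$. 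Finally Condition (4ii) follows because the decay factors $e^{\frac12\mu_k(t-\hat s)}$ confine the effective $\hat s$-support, so for $t\in[0,\tau)$ the mass of ${\cal D}_\theta{\cal M}^N(f)$ outside a large interval $(\alpha,\beta)$ is uniformly small.

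The delicate point is the self-mapping step: the infinite-mode Malliavin computation must close up \emph{exactly} to $\rho^N(t)$ through (\ref{NBJIHU}), which forces the periodicity folding and the mode-by-mode summation to be carried out with the sharp constants $K^N_1,K^N_2$, using the summability from Condition (B) at each stage to keep the infinitely many directions under control. This per-direction bookkeeping is precisely what the finite-dimensional argument of \cite{feng-wu-zhao} does not supply.
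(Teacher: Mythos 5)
Your proposal follows essentially the same route as the paper's proof: the same splitting of ${\cal D}^j_r{\cal M}^N$ into the term where the derivative hits $\Phi^N P^j$ (bounded via (\ref{44444}) and Condition (B), producing the cubic factors in $K^N_2$) and the term where it hits $F$ (bounded via $\|\nabla F\|_\infty$, the input's $\rho^N$-bound, and periodicity folding into $[0,\tau)$ yielding $K^N_1$), closed up through (\ref{NBJIHU}), followed by relative compactness via Theorem \ref{B-S2}. One caution: in the self-mapping step the difference-quotient constraint is not controlled by window translation alone --- the terms containing $({\cal D}^j_{r+\delta}-{\cal D}^j_r)Y^N$ require the input's equicontinuity bound, exactly as the paper's $\hat K_1,\hat K_2,\hat K_3$ decomposition does and as you yourself correctly invoke when verifying condition (4i).
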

\begin{proof} {\it \textbf{Step 1}}:
Now we show that for any $t\in [0,\tau)$,
$$\sum_{j=1}^{\infty}e^{-2\Lambda|t|}\mathbb{E}\int_{\mathbb{R}}\int_{{\cal O}}|{\mathcal{D}}^j_{r}{\cal M}^N(Y^N)(t,\cdot,x)|^2dxdr\leq \rho^N(t).$$
By the chain rule and (\ref{441}) - (\ref{442'}), it is easy to write down the  Malliavin derivative of ${\cal M}^N(Y^N)(t,\omega,x)$ with respect to the $j$th Brownian motion for $j\geq m+1$, 
\begin{eqnarray}\label{DM}
{\cal D}^j_{r}{\cal M}^N(Y^N)(t,\omega,x)&=&\int_{-\infty}^r\chi_{\{r\leq t\}}(r){\cal D}^j_{r}(\Phi^N_{t-\hat{s},\hat{s}}P^j)F(\hat{s}, Y^N(\hat{s},\omega))(x)d\hat{s}\nonumber\\
&&+\int_{-\infty}^t\sum_{k=m+1}^{\infty}\Phi^N_{t-\hat{s},\hat{s}}P^k\nabla F(\hat{s}, Y^N(\hat{s},\omega)){\cal D}^j_{r} Y^N(\hat{s},\omega,x)d\hat{s} \nonumber\\
&&-\int^{+\infty}_t\sum_{k=1}^{m}\Phi^N_{t-\hat{s},\hat{s}}P^k\nabla F(\hat{s}, Y^N(\hat{s},\omega)){\cal D}^j_{r} Y^N(\hat{s},\omega,x)d\hat{s}.
\end{eqnarray}
Then the following $L^2$-estimation follows:
\begin{eqnarray*}
&&e^{-2\Lambda|t|}\int_{\mathbb{R}}\mathbb{E}\int_{{\cal O}}|{\cal D}^j_{r}{\cal M}^N(Y^N)(t,\cdot,x)|^2dxdr\\
&=&e^{-2\Lambda|t|}\Big(\int_{-\infty}^t+\int^{\infty}_t\Big)\mathbb{E}\int_{{\cal O}}|{\cal D}^j_{r}{\cal M}^N(Y^N)(t,\cdot,x)|^2dxdr\\
&\leq & 3e^{-2\Lambda|t|}\int_{-\infty}^t\mathbb{E}\int_{{\cal O}}\Big|\int_{-\infty}^{r}{\cal D}_{r}^j(\Phi^N_{t-\hat{s},\hat{s}}P^j)F(\hat{s}, Y^N(\hat{s},\cdot))(x)d\hat{s}\Big|^2dxdr\\
&&+3e^{-2\Lambda|t|}\int_{\mathbb{R}}\mathbb{E}\sum_{k=m+1}^{\infty}\int_{{\cal O}}\Big|\int_{-\infty}^t\Phi^N_{t-\hat{s},\hat{s}}P^k\nabla F(\hat{s}, Y^N(\hat{s},\cdot,x)){\cal D}^j_{r} Y^N(\hat{s},\cdot,x)d\hat{s}\Big|^2dxdr\\
&&+3e^{-2\Lambda|t|}\int_{\mathbb{R}}\mathbb{E}\sum_{k=1}^{m}\int_{{\cal O}}\Big|\int^{+\infty}_t\Phi^N_{t-\hat{s},\hat{s}}P^k\nabla F(\hat{s}, Y^N(\hat{s},\omega,x)){\cal D}^j_{r} Y^N(\hat{s},\cdot,x)d\hat{s}\Big|^2dxdr\\
&&=:\sum_{i=1,2,3}L^j_i.
\end{eqnarray*}

Firstly, by (\ref{44444}) and trick in (\ref{eqn:trick}) we have that
\begin{eqnarray*}
L^j_1&\leq & 3\|F\|^2_{\infty}e^{-2\Lambda|t|}\int_{-\infty}^t\mathbb{E}\Big(\int_{-\infty}^{r}\|{\cal D}^j_{r}\Phi^N_{t-\hat{s},\hat{s}}P^j\|d\hat{s}\Big)^2dr\\
&\leq &12N^2\|F\|^2_{\infty}\sigma_j^2 e^{-2\Lambda|t|}\int_{-\infty}^t\Big(\int_{-\infty}^{r}e^{\frac{1}{2}\mu_j(t-\hat{s})}e^{\Lambda|\hat{s}|}d\hat{s}\Big)^2dr\\
&\leq & 96N^2\|F\|^2_{\infty}\left(\frac{\sigma_j^2 }{|\mu_j-2\Lambda|^3}+\frac{\sigma_j^2 }{|\mu_j+2\Lambda|^3}\right)\\
&\leq &96N^2\|F\|^2_{\infty}\left(\frac{\sigma_j^2 }{|\mu_{m+1}-2\Lambda|^3}+\frac{\sigma_j^2 }{|\mu_{m+1}+2\Lambda|^3}\right).
\end{eqnarray*}
Secondly, through the norm preserving property in Lemma \ref{Preserving},
\begin{eqnarray*}
L^j_2&=&3e^{-2\Lambda|t|}\int_{\mathbb{R}}\mathbb{E}\sum_{k=m+1}^{\infty}\int_{{\cal O}}\Big|\int_{-\infty}^t\Phi^N_{t-\hat{s},\hat{s}}P^k\nabla F(\hat{s}, Y^N)({\cal D}^j_{r} Y^N(\hat{s},\cdot,x))^{(k)}d\hat{s}\Big|^2dxdr\\
&\leq & 3e^{-2\Lambda|t|}N^2\int_{\mathbb{R}} \mathbb{E}\sum_{k=m+1}^{\infty}\int_{{\cal O}}\Big(\int_{-\infty}^te^{\frac{1}{2}\mu_k(t-\hat{s})}e^{\Lambda|\hat{s}|}|\nabla F^k(\hat{s}, Y^N)||({\cal D}^j_{r} Y^N(\hat{s},\cdot,x))^{(k)}|d\hat{s}\Big)^2dxdr\\
&\leq & 6N^2\int_{\mathbb{R}} \mathbb{E}\sum_{k=m+1}^{\infty}\int_{{\cal O}}\Big(\int_{-\infty}^te^{(\frac{1}{2}\mu_{m+1}+\Lambda)(t-\hat{s})}|\nabla F^k(\hat{s}, Y^N)||({\cal D}^j_{r} Y^N(\hat{s},\cdot,x))^{(k)}|d\hat{s}\Big)^2dxdr\\
&& + 6N^2\int_{\mathbb{R}} \mathbb{E}\sum_{k=m+1}^{\infty}\int_{{\cal O}}\Big(\int_{-\infty}^te^{(\frac{1}{2}\mu_{m+1}-\Lambda)(t-\hat{s})}|\nabla F^k(\hat{s}, Y^N)||({\cal D}^j_{r} Y^N(\hat{s},\cdot,x))^{(k)}|^2d\hat{s}\Big)^2dxdr\\
&\leq & \frac{12N^2}{|\mu_{m+1}+4\Lambda|}\int_{\mathbb{R}} \mathbb{E}\int_{{\cal O}}\int_{-\infty}^te^{\frac{1}{2}\mu_{m+1}(t-\hat{s})}\sum_{k=m+1}^{\infty}|\nabla F^k(\hat{s}, Y^N)|^2|({\cal D}^j_{r} Y^N(\hat{s},\cdot,x))^{(k)}|^2d\hat{s}dxdr\\
&& + \frac{12N^2}{|\mu_{m+1}-4\Lambda|}\int_{\mathbb{R}}\mathbb{E} \int_{{\cal O}}\int_{-\infty}^te^{\frac{1}{2}\mu_{m+1}(t-\hat{s})}\sum_{k=m+1}^{\infty}|\nabla F^k(\hat{s}, Y^N)|^2|({\cal D}^j_{r} Y^N(\hat{s},\cdot,x))^{(k)}|^2d\hat{s}dxdr\\
&=&\Big(\frac{12N^2\|\nabla F\|^2_{\infty}}{|\mu_{m+1}+4\Lambda|}+\frac{12N^2\|\nabla F\|^2_{\infty}}{|\mu_{m+1}-4\Lambda|}\Big)\\
&&\cdot\bigg\{\sum_{i=0}^{\infty}e^{\frac{1}{2}\mu_{m+1}i\tau}\int_{0}^{t}e^{-2\Lambda|\hat{s}|}\int_{\mathbb{R}}\int_{{\cal O}}e^{\frac{1}{2}\mu_{m+1}(t-\hat{s})}\mathbb{E}|{\cal D}^j_{r}Y^N(\hat{s},\theta_{-i\tau}\cdot, x)|^2 dx drd\hat{s}\\
&&\hspace{0.4cm}+\sum_{i=0}^{\infty}e^{\frac{1}{2}\mu_{m+1}i\tau}\int_{t}^{\tau}e^{-2\Lambda|\hat{s}|}\int_{\mathbb{R}}\int_{{\cal O}}e^{\frac{1}{2}\mu_{m+1}(t+\tau-\hat{s})}\mathbb{E}|{\cal D}^j_{r}Y^N(\hat{s},\theta_{-(i+1)\tau}\cdot, x)|^2 dx drd\hat{s}\bigg\}\\
&\leq&e^{(2\Lambda-\frac{1}{2}\mu_{m+1})\tau}\Big(\frac{12N^2\|\nabla F\|^2_{\infty}}{|\mu_{m+1}+4\Lambda|}+\frac{12N^2\|\nabla F\|^2_{\infty}}{|\mu_{m+1}-4\Lambda|}\Big)\Big(\sum_{i=0}^{\infty}e^{\frac{1}{2}\mu_{m+1}i\tau}\Big)\\
&&\cdot\int_{0}^{\tau}e^{\frac{1}{2}\mu_{m+1}|t-\hat{s}|}e^{-2\Lambda|\hat{s}|}\int_{\mathbb{R}}\int_{{\cal O}}\mathbb{E}|{\cal D}^j_{r}Y^N(\hat{s},\cdot, x)|^2 dx drd\hat{s},
\end{eqnarray*}
where 
$$({\cal D}^j_{r} Y^N(\hat{s},\omega,x))^{(k)}:={\cal D}^j_{r}\Big( \langle Y^N(\hat{s},\omega), \phi_k\rangle \phi_k(x)\Big).$$
Similarly
\begin{eqnarray*}
L^j_3&\leq &e^{(2\Lambda+\frac{1}{2}\mu_{m})\tau}\Big(\frac{12N^2\|\nabla F\|^2_{\infty}}{|\mu_{m}+4\Lambda|}+\frac{12N^2\|\nabla F\|^2_{\infty}}{|\mu_{m}-4\Lambda|}\Big)\Big(\sum_{i=0}^{\infty}e^{-\frac{1}{2}\mu_{m}i\tau}\Big)\\
&&\cdot\int_{0}^{\tau}e^{\frac{1}{2}\mu|t-\hat{s}|}e^{-2\Lambda|\hat{s}|}\int_{\mathbb{R}}\int_{{\cal O}}\mathbb{E}|{\cal D}^j_{r}Y^N(\hat{s},\cdot, x)|^2 dx drd\hat{s}.
\end{eqnarray*}
Similar calculation can be applied to the case for $j\leq m$. Therefore we finally reach to
$$\sum_{j=1}^{\infty}L^j_1=96\|F\|^2_{\infty}\sum_{j=1}^{\infty}\sigma_j^2 \left(\frac{1}{|\mu_{m+1}+2\Lambda|^3}+\frac{1}{|\mu_{m+1}-2\Lambda|^3}+\frac{1}{|\mu_m-2\Lambda|^3}+\frac{1}{|\mu_m+2\Lambda|^3}\right)=: K_2^N,$$
and
\begin{eqnarray*}
\sum_{j=1}^{\infty}(L^j_2+L^j_3)&\leq &e^{(2\Lambda+\frac{1}{2}\hat{\mu})\tau}\sum_{k=m}^{m+1}\Big(\frac{24N^2\|\nabla F\|^2_{\infty}}{|\mu_k+4\Lambda|}+\frac{24N^2\|\nabla F\|^2_{\infty}}{|\mu_k-4\Lambda|}\Big) \Big(\sum_{i=0}^{\infty}e^{-\frac{1}{2}|\mu_{k}|i\tau}\Big)\\
&&\cdot\int_{0}^{\tau}e^{\frac{1}{2}\mu|t-\hat{s}|}e^{-2\Lambda|\hat{s}|}\sum_{j=1}^{\infty}\int_{\mathbb{R}}\int_{{\cal O}}\mathbb{E}|{\cal D}^j_{r}Y^N(\hat{s},\cdot, x)|^2 dx drd\hat{s}\\
&\leq & K^N_1 \int_{0}^{\tau}e^{\frac{1}{2}\mu|t-\hat{s}|}\rho^N(s)d\hat{s}.
\end{eqnarray*}

To sum up, we have verified the following estimation:
$$\sum_{j=1}^{\infty}e^{-2\Lambda|t|}\int_{\mathbb{R}}\mathbb{E}\int_{{\cal O}}|{\mathcal{D}}^j_{r}{\cal M}^N(Y^N)(t,\cdot,x)|^2dxdr\leq\rho^N(t).$$

Moreover, the solution $\rho^N(t)$ to equation (\ref{NBJIHU}) is continuous in $t$, so that for $Y^N\in C_{\tau,\rho}^{\Lambda,N}(\mathbb{R},L^2({\cal O},\mathcal{D}^{1,2}))$, there exists an integer $N_b$ such that for any $t\in [0,\tau)$,  
$$\sum_{j=1}^{\infty}e^{-2\Lambda|t|}\int_{\mathbb{R}}\mathbb{E}\int_{{\cal O}}|{\mathcal{D}}^j_{r}{\cal M}^N(Y^N)(t,\cdot,x)|^2dxdr\leq \rho^N(t)\leq N_b.$$

It remains to show that for any $\delta\in \mathbb{R}$,
$$\sup\limits_{\substack{t\in [0,\tau)}}\dfrac{e^{-2\Lambda |t|}}{|\delta|}\sum_{j=1}^{\infty}\int_{\mathbb{R}}\mathbb{E}\int_{{\cal O}}|{\cal D}_{r+\delta}^j\mathcal{M}^N(Y^N)(t,\cdot,x)-{\cal D}_{r}^j\mathcal{M}^N(Y^N)(t,\cdot,x)|^2dxdr< \infty.$$
The left hand side of the above can be separated into three integrals, 
\begin{eqnarray}\label{QSPDE}
&&\sup\limits_{\substack{t\in (0,\tau]}}\dfrac{e^{-2\Lambda |t|}}{|\delta|} \sum_{j=1}^{\infty}\int_{\mathbb{R}}\mathbb{E}\int_{{\cal O}}|{\cal D}_{r+\delta}^j{\cal M}^N(Y^N)(t,\cdot,x)-{\cal D}_{r}^j{\cal M}^N(Y^N)(t,\cdot,x)|^2dxdr\nonumber\\
&=& \sup\limits_{\substack{t\in (0,\tau]}}\dfrac{e^{-2\Lambda  |t|}}{|\delta|} \sum_{j=1}^{\infty} \int_{-\infty}^{t-\delta}\mathbb{E}\int_{{\cal O}}|{\cal D}_{r+\delta}^j{\cal M}^N(Y^N)(t,\cdot,x)-{\cal D}_{r}^j{\cal M}^N(Y^N)(t,\cdot,x)|^2dxdr\nonumber\\
&&+\sup\limits_{\substack{t\in (0,\tau]}}\dfrac{e^{-2\Lambda  |t|}}{|\delta|}  \sum_{j=1}^{\infty}\int_{t-\delta}^{t}\mathbb{E}\int_{{\cal O}}|{\cal D}_{r+\delta}^j{\cal M}^N(Y^N)(t,\cdot,x)-{\cal D}_{r}^j{\cal M}^N(Y^N)(t,\cdot,x)|^2dxdr\nonumber\\
&&+\sup\limits_{\substack{t\in (0,\tau]}}\dfrac{e^{-2\Lambda  |t|}}{|\delta|}\sum_{j=1}^{\infty} \int_{t}^{+\infty}\mathbb{E}\int_{{\cal O}}|{\cal D}_{r+\delta}^j{\cal M}^N(Y^N)(t,\cdot,x)-{\cal D}_{r}^j{\cal M}^N(Y^N)(t,\cdot,x)|^2dxdr.\nonumber\\
&=:&\hat{K}_1+\hat{K}_2+\hat{K}_3.
\end{eqnarray}
The above domain $\mathbb{R}$ partitioned into three sub-domains is based on the fact that the Malliavin derivative stays the same in each of three sub-domains. 

To consider $\hat{K}_1$, note that when $r\leq t-\delta$, by (\ref{eqn:M}), (\ref{441}) and (\ref{442'}) we can first compute the Malliavin derivative of $\mathcal{M}$ and substitute it back, thus have
\begin{eqnarray*}
\hat{K}_1&\leq &\dfrac{3e^{-2\Lambda |t|}}{|\delta|}   \sum_{j=1}^{\infty} \int_{-\infty}^{t-\delta}\mathbb{E}\int_{{\cal O}}\bigg\{\Big|\int_{-\infty}^t  \sum_{k=m+1}^{\infty}\Phi_{t-\hat{s},\hat{s}}^NP^k\nabla F(\hat{s},Y^N(\hat{s},\cdot,x))({\cal D}_{r+\delta}^j-{\cal D}_{r}^j)(Y^N)(t,\cdot,x)d\hat{s}\Big|^2\\
&&+\chi_{\{j\geq m+1\}}(j)\Big|\int_{-\infty}^{r+\delta} {\cal D}_{r+\delta}^j\Phi_{t-\hat{s},\hat{s}}^NP^jF(\hat{s},Y^N(\hat{s},\cdot,x))d\hat{s}-\int_{-\infty}^{r} {\cal D}_{r}^j\Phi_{t-\hat{s},\hat{s}}^NP^jF(\hat{s},Y^N(\hat{s},\cdot,x))d\hat{s}\Big|^2\\
&&+\Big|\int^{+\infty}_t  \sum_{k=1}^{m}\Phi_{t-\hat{s},\hat{s}}^NP^k\nabla F(\hat{s},Y^N(\hat{s},\cdot,x))({\cal D}_{r+\delta}^j-{\cal D}_{r}^j)(Y^N)(t,\cdot,x)d\hat{s}\Big|^2\bigg\}dxdr\\
&=&\sum_{i=1}^{3}Q_i.
\end{eqnarray*}
First note that $Q_1$ is bounded as follows,
\begin{eqnarray*}
Q_1&\leq &\dfrac{3N^2}{|\delta|}e^{-2\Lambda |t|}\sum_{k=m+1}^{\infty}\mathbb{E}\sum_{j=1}^{\infty}\int_{{\cal O}}\int_{\mathbb{R}}\Big(\int_{-\infty}^t e^{\Lambda |\hat{s}|}e^{\frac{1}{2}\mu_{k}(t-\hat{s})} \\
&&\quad \quad \cdot|\nabla F^k(\hat{s},Y^N(\hat{s},\cdot,x))||(({\cal D}_{r+\delta}^j-{\cal D}_{r}^j)(Y^N)(\hat{s},\cdot,x))^{(k)}|d\hat{s}\Big)^2dxdr\\
&\leq &\dfrac{12N^2}{|\delta|}\Big(\frac{1}{|\mu_{m+1}+4\Lambda|}+\frac{1}{|\mu_{m+1}-4\Lambda|}\Big)\\
&&\cdot\sum_{j=1}^{\infty}\int_{\mathbb{R}}\mathbb{E}\int_{{\cal O}}\int_{-\infty}^t e^{\frac{1}{2}\mu_{m+1}(t-\hat{s})}\sum_{k=m+1}^{\infty}|\nabla F^k(\hat{s},Y^N(\hat{s},\cdot,x))|^2 |(({\cal D}_{r+\delta}^j-{\cal D}_{r}^j)(Y^N)(\hat{s},\cdot))^{(k)}|^2d\hat{s}dxdr\\
&\leq &12N^2\|\nabla F\|^2_{\infty}\Big(\frac{1}{|\mu_{m+1}+4\Lambda|}+\frac{1}{|\mu_{m+1}-4\Lambda|}\Big)\\
&&\cdot\dfrac{1}{|\delta|}\sum_{j=1}^{\infty}\int_{-\infty}^t e^{\frac{1}{2}\mu_{m+1}(t-\hat{s})}\mathbb{E}\int_{\mathbb{R}}\int_{{\cal O}} |({\cal D}_{r+\delta}^j-{\cal D}_{r}^j)(Y^N)(\hat{s},\cdot,x)|^2dxdrd\hat{s}.
\end{eqnarray*}
From Lemma \ref{Preserving} we can see that the term $\mathbb{E}\int_{\mathbb{R}}\int_{{\cal O}} |({\cal D}_{r+\delta}^j-{\cal D}_{r}^j)(Y^N)(\hat{s},\cdot,x)|^2dxdr$ is periodic in time, thus one one period is needed to address the boundedness. This leads to the following estimate
\begin{eqnarray*}
Q_1&\leq &12N^2e^{2\Lambda \tau}\|\nabla F\|^2_{\infty}\Big(\frac{1}{|\mu_{m+1}+4\Lambda|}+\frac{1}{|\mu_{m+1}-4\Lambda|}\Big)\\
&&\cdot\sum_{j=1}^{\infty}\int_{-\infty}^t e^{\frac{1}{2}\mu_{m+1}(t-\hat{s})}\dfrac{e^{-2\Lambda|\hat{s}|}}{|\delta|}\mathbb{E}\int_{\mathbb{R}}\int_{{\cal O}} |({\cal D}_{r+\delta}^j-{\cal D}_{r}^j)(Y^N)(\hat{s},\cdot,x)|^2dxdrd\hat{s}\\
&\leq &24N^2\|\nabla F\|^2_{\infty}e^{2\Lambda \tau}\frac{1}{|\mu_{m+1}|}\Big(\frac{1}{|\mu_{m+1}+4\Lambda|}+\frac{1}{|\mu_{k}-4\Lambda|}\Big)\\
&&\sup_{\hat{s}\in [0,\tau)}\dfrac{e^{-2\Lambda|\hat{s}|}}{|\delta|}\sum_{j=1}^{\infty}\int_{\mathbb{R}}\mathbb{E}\int_{{\cal O}} |({\cal D}_{r+\delta}^j-{\cal D}_{r}^j)(Y^N)(\hat{s},\cdot,x)|^2dxdr<\infty.
\end{eqnarray*}
Analogously, 
\begin{eqnarray*}
Q_3&\leq &24e^{2\Lambda \tau}\frac{1}{|\mu_{m}|}\Big(\frac{N^2\|\nabla F\|^2_{\infty}}{|\mu_{m}+4\Lambda|}+\frac{N^2\|\nabla F\|^2_{\infty}}{|\mu_{m}-4\Lambda|}\Big) \\
&&\cdot\sup_{\hat{s}\in [0,\tau)}\dfrac{e^{-2\Lambda|\hat{s}|}}{|\delta|}\sum_{j=1}^{\infty}\int_{\mathbb{R}}\mathbb{E}\int_{{\cal O}} |({\cal D}_{r+\delta}^j-{\cal D}_{r}^j)(Y^N)(\hat{s},\cdot,x)|^2dxdr<\infty.
\end{eqnarray*}
Moreover by (\ref{441}) we have
\begin{eqnarray*}
Q_2&=&\dfrac{3e^{-2\Lambda |t|}}{|\delta|} \sum_{j=m+1}^{\infty} \int_{-\infty}^{t-\delta}\mathbb{E}\int_{{\cal O}}\Big|\int_{-\infty}^{r+\delta} {\cal D}_{r+\delta}^j\Phi_{t-\hat{s},\hat{s}}^NP^jF(\hat{s},Y^N(\hat{s},\cdot,x))d\hat{s}\\
&&\hspace{4.5cm}-\int_{-\infty}^{r} {\cal D}_{r}^j\Phi_{t-\hat{s},\hat{s}}^NP^jF(\hat{s},Y^N(\hat{s},\cdot,x))d\hat{s}\Big|^2dxdr\\
&\leq & \dfrac{3e^{-2\Lambda |t|}}{|\delta|} \sum_{j=1}^{\infty}\int_{-\infty}^{t-\delta}\mathbb{E}\int_{{\cal O}}\Big(\int_{r}^{r+\delta} \|{\cal D}_{r+\delta}^j\Phi_{t-\hat{s},\hat{s}}^NP^j\||F^j(\hat{s},Y^N(\hat{s},\cdot,x))|d\hat{s}\Big)^2dxdr\\
&\leq &\sigma_j^2  \dfrac{12N^2e^{-2\Lambda |t|}}{|\delta|} \sum_{j=1}^{\infty}\int_{-\infty}^{t-\delta}\mathbb{E}\int_{{\cal O}}\Big(\int_{r}^{r+\delta} e^{\frac{1}{2}\mu_{j}(t-\hat{s})}e^{\Lambda|\hat{s}|}|F^j(\hat{s},Y^N(\hat{s},\cdot,x))|d\hat{s}\Big)^2dxdr\\
&\leq & \sigma_j^2 \dfrac{24N^2\sigma_j^2 }{|\delta|} \sum_{j=1}^{\infty}\int_{-\infty}^{t-\delta}\mathbb{E}\int_{{\cal O}}\Big(\int_{r}^{r+\delta} e^{(\frac{1}{2}\mu_{j}-\Lambda)(t-\hat{s})}|F^j(\hat{s},Y^N(\hat{s},\cdot,x))|d\hat{s}\Big)^2dxdr\\
&&+\sigma_j^2 \dfrac{24N^2}{|\delta|} \sum_{j=1}^{\infty}\int_{-\infty}^{t-\delta}\mathbb{E}\int_{{\cal O}}\Big(\int_{r}^{r+\delta} e^{(\frac{1}{2}\mu_{j}+\Lambda)(t-\hat{s})}|F^j(\hat{s},Y^N(\hat{s},\cdot,x))|d\hat{s}\Big)^2dxdr\\
&\leq & \dfrac{24N^2\sigma_j^2 }{|\delta|} \int_{-\infty}^{t-\delta}\mathbb{E}\int_{{\cal O}}\Big\{\int_{r}^{r+\delta} e^{(\frac{1}{2}\mu_{m+1}-\Lambda)(t-\hat{s})}d\hat{s}\int_{r}^{r+\delta} e^{(\frac{1}{2}\mu_{m+1}-\Lambda)(t-\hat{s})}\sum_{j=1}^{\infty}|F^j(\hat{s},Y^N(\hat{s},\cdot,x))|^2d\hat{s}\\
& &+\int_{r}^{r+\delta} e^{(\frac{1}{2}\mu_{m+1}+\Lambda)(t-\hat{s})}d\hat{s}\int_{r}^{r+\delta} e^{(\frac{1}{2}\mu_{m+1}+\Lambda)(t-\hat{s})}\sum_{j=1}^{\infty}|F^j(\hat{s},Y^N(\hat{s},\cdot,x))|^2d\hat{s}\Big\}dxdr\\
&\leq & 24N^2\sigma_j^2 \|F\|^2_{\infty}vol({\cal O}) \int_{-\infty}^{t-\delta}e^{(\frac{1}{2}\mu_{m+1}-\Lambda)(t-\delta-r)}\int_{-\infty}^{r+\delta} e^{(\frac{1}{2}\mu_{m+1}-\Lambda)(r+\delta-\hat{s})}d\hat{s}\\
&& +24N^2\sigma_j^2 \|F\|^2_{\infty}vol({\cal O}) \int_{-\infty}^{t-\delta}e^{(\frac{1}{2}\mu_{m+1}+\Lambda)(t-\delta-r)}\int_{-\infty}^{r+\delta} e^{(\frac{1}{2}\mu_{m+1}+\Lambda)(r+\delta-\hat{s})}d\hat{s}\\
&\leq & 96N^2\sigma_j^2 \|F\|^2_{\infty}vol({\cal O}) \Big(\frac{1}{|\mu_{m+1}+2\Lambda|^2}+\frac{1}{|\mu_{m+1}-2\Lambda|^2}\Big)<\infty.
\end{eqnarray*}
Thus $\hat{K}_1<\infty$.

To consider $\hat{K}_2$ in (\ref{QSPDE}), note that when $r \leq t\leq r+\delta$, from (\ref{441}) and (\ref{442'}) we have
\begin{eqnarray*}
\hat{K}_2&=&\dfrac{e^{-2\Lambda |t|}}{|\delta|} \sum_{j=1}^{\infty} \int_{t-\delta}^{t}\mathbb{E}\int_{{\cal O}}|{\cal D}_{r+\delta}^j{\cal M}^N(Y^N)(t,\cdot,x)-{\cal D}_{r}^j{\cal M}^N(Y^N)(t,\cdot,x)|^2 dx dr\\
&\leq &\dfrac{4e^{-2\Lambda |t|}}{|\delta|}   \sum_{j=1}^{\infty} \int_{t-\delta}^{t}
 \chi_{\{j\geq m+1\}}(j)\mathbb{E}\int_{{\cal O}}\Big|\int_{-\infty}^{r} {\cal D}_{r}^j\Phi_{t-\hat{s},\hat{s}}^NP^jF(\hat{s},Y^N(\hat{s},\cdot,x))d\hat{s}\Big|^2dx\\
 &&+\dfrac{4e^{-2\Lambda |t|}}{|\delta|}   \sum_{j=1}^{\infty} \int_{t-\delta}^{t}\chi_{\{j\leq m\}}(j)\mathbb{E}\int_{{\cal O}}\Big|\int^{+\infty}_{r+\delta} {\cal D}_{r+\delta}^j\Phi_{t-\hat{s},\hat{s}}^NP^jF(\hat{s},Y^N(\hat{s},\cdot))d\hat{s}\Big|^2dxdr\\
&&+\dfrac{4e^{-2\Lambda |t|}}{|\delta|}   \sum_{j=1}^{\infty} \int_{t-\delta}^{t}\mathbb{E}\int_{{\cal O}}\Big|\int_{-\infty}^t \sum_{k=m+1}^{\infty}\Phi_{t-\hat{s},\hat{s}}^NP^k\nabla F(\hat{s},Y^N(\hat{s},\cdot,x))({\cal D}_{r+\delta}^j-{\cal D}_{r}^j)(Y^N)(\hat{s},\cdot,x)d\hat{s}\Big|^2dxdr\\
&&+\dfrac{4e^{-2\Lambda |t|}}{|\delta|}   \sum_{j=1}^{\infty} \int_{t-\delta}^{t}\mathbb{E}\int_{{\cal O}}\Big|\int^{+\infty}_t \sum_{k=1}^{m}\Phi_{t-\hat{s},\hat{s}}^NP^k\nabla F(\hat{s},Y^N(\hat{s},\cdot,x))({\cal D}_{r+\delta}^j-{\cal D}_{r}^j)(Y^N)(\hat{s},\cdot,x)d\hat{s}\Big|^2dxdr\\
&=&\sum_{i=4}^{7}Q_i,
\end{eqnarray*}
where
\begin{eqnarray*}
Q_{4}&\leq & \dfrac{4}{|\delta|} e^{-2\Lambda |t|} \sum_{j=m+1}^{\infty}\int_{t-\delta}^{t} \mathbb{E}\int_{{\cal O}}\Big(\int_{-\infty}^{r} \|{\cal D}_{r}^j\Phi_{t-\hat{s},\hat{s}}^NP^j\||F^j(\hat{s},Y^N(\hat{s},\cdot,x))|d\hat{s}\Big)^2dxdr\\
&\leq & \dfrac{16}{|\delta|}e^{-2\Lambda |t|}\sum_{j=m+1}^{\infty}\int_{t-\delta}^{t}\Big(\int_{-\infty}^{t} e^{\frac{1}{2}\mu_{j}(t-\hat{s})}e^{\Lambda|\hat{s}|}|F^j(\hat{s},Y^N(\hat{s},\cdot,x))|d\hat{s}\Big)^2dxdr\\
&\leq & 128\sigma_j^2 \|F\|_{\infty}^2vol({\cal O})\Big(\dfrac{1}{|\mu_{m+1}-2\Lambda|^2}+\dfrac{1}{|\mu_{m+1}+2\Lambda|^2}\Big)<\infty.
\end{eqnarray*}
and similarly,
\begin{eqnarray*}
Q_{5}&\leq & 128\sigma_j^2 \|F\|_{\infty}^2vol({\cal O})\Big(\dfrac{1}{|\mu_{m}-2\Lambda|^2}+\dfrac{1}{|\mu_{m}+2\Lambda|^2}\Big)<\infty.
\end{eqnarray*}
Furthermore, we have by the similar calculations in $Q_1$ and $Q_2$,
\begin{eqnarray*}
Q_6&\leq & e^{2\Lambda \tau}\frac{32N^2\|\nabla F\|^2_{\infty}}{|\mu_{m+1}|}\Big(\frac{1}{|\mu_{m+1}+4\Lambda|}+\frac{1}{|\mu_{m+1}-4\Lambda|}\Big) \\
&&\cdot\sup_{\hat{s}\in [0,\tau)}\dfrac{e^{-2\Lambda|\hat{s}|}}{|\delta|}\sum_{j=1}^{\infty}\int_{\mathbb{R}}\mathbb{E}\int_{{\cal O}} |({\cal D}_{r+\delta}^j-{\cal D}_{r}^j)(Y^N)(\hat{s},\cdot,x)|^2dxdr<\infty,
\end{eqnarray*}
and 
\begin{eqnarray*}
Q_{7}&\leq & e^{2\Lambda \tau}\frac{32N^2\|\nabla F\|^2_{\infty}}{|\mu_{m}|}\Big(\frac{1}{|\mu_{m}+4\Lambda|}+\frac{1}{|\mu_{m}-4\Lambda|}\Big) \\
&&\cdot\sup_{\hat{s}\in [0,\tau)}\dfrac{e^{-2\Lambda|\hat{s}|}}{|\delta|}\sum_{j=1}^{\infty}\int_{\mathbb{R}}\mathbb{E}\int_{{\cal O}} |({\cal D}_{r+\delta}^j-{\cal D}_{r}^j)(Y^N)(\hat{s},\cdot,x)|^2dxdr<\infty.
\end{eqnarray*}
Thus we have that $\hat{K}_2<\infty$.

Finally to consider $\hat{K}_3$, note that when $t \leq r$, (\ref{441}) and (\ref{442}) gives us 
\begin{eqnarray*}
\hat{K}_3&=&\dfrac{e^{-2\Lambda |t|}}{|\delta|}\sum_{j=1}^{\infty}  \int_{t}^{+\infty}\mathbb{E}\int_{{\cal O}}|{\cal D}_{r+\delta}^j{\cal M}^N(Y^N)(t,\cdot,x)-{\cal D}_{r}^j{\cal M}^N(Y^N)(t,\cdot,x)|^2dxdr\\
&\leq &\dfrac{3e^{-2\Lambda |t|}}{|\delta|} \sum_{j=1}^{\infty}\int_{t}^{+\infty} \mathbb{E}\int_{{\cal O}}\bigg\{\Big|\int_{-\infty}^t \sum_{k=m+1}^{\infty}\Phi_{t-\hat{s},\hat{s}}^NP^k\nabla F(\hat{s},Y^N(\hat{s},\cdot,x))({\cal D}_{r+\delta}^j-{\cal D}_{r}^j)(Y^N)(\hat{s},\cdot,x)d\hat{s}\Big|^2\\
&&+\chi_{\{j\leq m\}}(j)\Big|\int^{+\infty}_{r+\delta} {\cal D}_{r+\delta}^j\Phi_{t-\hat{s},\hat{s}}^NP^jF(\hat{s},Y^N(\hat{s},\cdot,x))d\hat{s}-\int^{+\infty}_{r} {\cal D}_{r}^j\Phi_{t-\hat{s},\hat{s}}^NP^jF(\hat{s},Y^N(\hat{s},\cdot,x))d\hat{s}\Big|^2\\
&&+\Big|\int^{+\infty}_t \sum_{k=1}^{m}\Phi_{t-\hat{s},\hat{s}}^NP^k\nabla F(\hat{s},Y^N(\hat{s},\cdot,x))({\cal D}_{r+\delta}^j-{\cal D}_{r}^j)(Y^N)(\hat{s},\cdot,x)d\hat{s}\Big|^2\bigg\}dxdr\\
&=&\sum_{i=8}^{10}Q_i,
\end{eqnarray*}
Now it is easy to write down the following estimations,
\begin{eqnarray*}
Q_8&\leq & e^{2\Lambda \tau}\frac{24N^2\|\nabla F\|^2_{\infty}}{|\mu_{m+1}|}\Big(\frac{1}{|\mu_{m+1}+4\Lambda|}+\frac{1}{|\mu_{m+1}-4\Lambda|}\Big)\\
&&\cdot\sup_{\hat{s}\in [0,\tau)}\dfrac{e^{-2\Lambda|\hat{s}|}}{|\delta|}\sum_{j=1}^{\infty}\int_{\mathbb{R}}\mathbb{E}\int_{{\cal O}} |({\cal D}_{r+\delta}^j-{\cal D}_{r}^j)(Y^N)(\hat{s},\cdot,x)|^2dxdr<\infty,
\end{eqnarray*}
and
\begin{eqnarray*}
Q_{10}&\leq & e^{2\Lambda \tau}\frac{24N^2\|\nabla F\|^2_{\infty}}{|\mu_{m}|}\Big(\frac{1}{|\mu_{m}+4\Lambda|}+\frac{1}{|\mu_{m}-4\Lambda|}\Big) \\
&&\cdot\sup_{\hat{s}\in [0,\tau)}\dfrac{e^{-2\Lambda|\hat{s}|}}{|\delta|}\sum_{j=1}^{\infty}\int_{\mathbb{R}}\mathbb{E}\int_{{\cal O}} |({\cal D}_{r+\delta}^j-{\cal D}_{r}^j)(Y^N)(\hat{s},\cdot,x)|^2dxdr<\infty.
\end{eqnarray*}
Similarly to $Q_2$,
\begin{eqnarray*}
Q_{9} &\leq & 96N^2\sigma_j^2 \|F\|^2_{\infty}vol({\cal O}) \Big(\frac{1}{|\mu_{m}+2\Lambda|^2}+\frac{1}{|\mu_m-2\Lambda|^2}\Big)<\infty.
\end{eqnarray*}

In summary, we have shown that
$$\sup\limits_{\substack{t\in (0,\tau]}}\dfrac{e^{-2\Lambda |t|}}{|\delta|} \sum_{j=1}^{\infty}\int_{\mathbb{R}}\mathbb{E}\int_{{\cal O}}|{\cal D}_{r+\delta}^j\mathcal{M}^N(Y^N)(t,\cdot,x)-{\cal D}_{r}^j\mathcal{M}^N(Y^N)(t,\cdot,x)|^2dxdr< \infty.$$
This leads to the conclusion that ${\cal M}^N$ maps from $C^{\Lambda,N}_{\tau,\rho}(\mathbb{R},L^2({\cal O},{\cal D}^{1,2}))$ to itself.

{\it \textbf{Step 2}}: Now we are ready to conclude that that for each $N\in \mathbb{N}$, $\mathcal{M}^N(C^{\Lambda,N}_{\tau,\rho}(\mathbb{R}, L^2({\cal O},{\cal D}^{1,2}))|_{(0,\tau]}$ is relatively compact in $C((0,\tau], L^2(\Omega\times{\cal O}))$ . This can be easily achieved by applying \cite[Theorem 2.3]{feng-zhao} and results from {\it \textbf{Step 1}}.  
\end{proof}
\begin{proof}[of Theorem \ref{THMC4M}]
From Lemma \ref{LEMMA4C4}, we know that ${\cal M}^N( S)\big|_{(0,\tau]}$ is relatively compact in $C_\tau^{\Lambda}((0,\tau], L^2(\Omega\times {\cal O}))$, where $$S:=C^{\Lambda,N}_{\tau,\rho}((0,\tau],L^2(\Omega, {\cal D}^{1,2}))\cap L_{\Lambda}^{\infty}(\mathbb{R}, L^2(\Omega, H^1({\cal O}))),$$
i.e., for any sequence ${\cal M}^N(Y^N_n)\in
C^{\Lambda,N}_{\tau,\rho}(\mathbb{R},L^2({\cal O},{\cal D}^{1,2}))$, there exists a
subsequence, still denoted by ${\cal M}^N(Y^N_n)$, and $V^N\in
C([0,\tau), L^2(\Omega\times{\cal O}))$ such that
\begin{eqnarray}\label{zhao1}
\sup\limits_{t\in (0,\tau]}\mathbb{E}\int_{{\cal O}}|{\cal M}^N(Y^N_n)(t,\cdot, x)-V^N(t,\cdot, x)|^2dx\to 0
\end{eqnarray}
as $n\to \infty$. 

 Set for any $t\in [m\tau, m\tau+\tau)$, $$V^N(t,\omega, x)=V^N(t-m\tau,\theta_{m\tau}\omega, x).$$
Note that by definition
\begin{eqnarray*}
{\cal M}^N(Y^N_n)(t,\theta_{m\tau}\omega, x)={\cal M}^N(Y^N_n)(t+m\tau,\omega, x).
\end{eqnarray*}
With (\ref{zhao1}) and the probability preserving of $\theta$, we get 
\begin{eqnarray*}
&&\sup\limits_{t\in [m\tau, m\tau+\tau)}e^{-2\Lambda|t|}\mathbb{E}\int_{{\cal O}}|{\cal M}^N(Y^N_n)(t,\cdot, x)-V^N(t,\cdot, x)|^2dx\\
&=&\sup\limits_{t\in [0,\tau)}e^{-2\Lambda|t-m\tau|}\mathbb{E}\int_{{\cal O}}|{\cal M}^N(Y^N_n)(t+m\tau ,\cdot, x)-V^N(t+m\tau,\cdot, x)|^2dx\\
&\leq &\sup\limits_{t\in [0,\tau)}\mathbb{E}\int_{{\cal O}}|{\cal M}^N(Y^N_n)(t,\theta
_{m\tau}\cdot, x)-V^N(t,\theta_{m\tau}\cdot, x)|^2dx\\
&=&\sup\limits_{t\in [0,\tau)}\mathbb{E}\int_{{\cal O}}|{\cal M}^N(Y^N_n)(t,\cdot, x)-V^N(t,\cdot, x)|^2dx\\
&\to& 0,
\end{eqnarray*}
 Thus
\begin{eqnarray*}
\sup\limits_{t\in \mathbb{R}}e^{-2\Lambda|t|}\mathbb{E}\int_{{\cal O}}|{\cal M}^N(Y^N_n)(t,
\cdot, x)-V^N(t,\cdot, x)|^2dx\to 0,
\end{eqnarray*}
as $n\to \infty$.
Therefore ${\cal M}^N(S)$ is relatively compact in $C_\tau^{\Lambda}(\mathbb{R}, L^2(\Omega\times{\cal O}))$.

According to the generalized
Schauder's fixed point argument Theorem \ref{Schauder}, ${\cal M}^N$ has a fixed point in
$C_{\tau}^{\Lambda}(\mathbb{R}, L^2(\Omega\times {\cal O}))$. That is to say
there exists a solution $Y^N\in
C_{\tau}^{\Lambda}(\mathbb{R}, L^2(\Omega\times{\cal O}))$ of equation
(\ref{VN}) such that for any $t\in \mathbb{R}$,
$Y^N(t+\tau,\omega, x)=Y^N(t,\theta_{\tau}\omega, x)$.   
\end{proof}

\begin{thm}\label{Main2}
 Let $F:\mathbb{R} \times \mathbb{R}^d\to \mathbb{R}$ be a continuous map, globally
bounded and the Jacobian $\nabla F(t,\cdot)$ be globally bounded.
Assume $F(t,u)=F(t+\tau,u)$ for some fixed $\tau>0$, and  Condition (L) and Condition (B) hold. There exists at least one ${\cal B}(\mathbb{R})\otimes\mathcal{F}$-measurable map $\hat{Y}: \mathbb{R}\times\Omega\rightarrow \mathbb{R}^d$
satisfying Eqn. (\ref{V}) and $\hat{Y}(t+\tau, \omega)=\hat{Y}(t, \theta_{\tau}\omega)$ for
any $t\in \mathbb{R}$ and $\omega\in \Omega$.
\end{thm}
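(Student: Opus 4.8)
The plan is to construct $\hat Y$ as a limit of the truncated solutions supplied by Theorem \ref{THMC4M}, exploiting that the truncation $\Phi^N$ switches off on a set whose probability tends to one. First I would invoke Theorem \ref{THMC4M} to fix, for every $N\in\mathbb N$, a ${\cal B}(\mathbb R)\otimes\mathcal F$-measurable map $Y^N$ solving the truncated equation (\ref{VN}) and satisfying $Y^N(t+\tau,\omega)=Y^N(t,\theta_\tau\omega)$ for all $t\in\mathbb R$ and $\omega\in\Omega$.

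Next I would introduce the localizing events. By the dichotomy estimate (\ref{412}) the random constant $C_\Lambda(\omega)$ bounding $\|\Phi(t,\theta_{\hat s}\omega)P^k\|e^{-\frac{1}{2}\mu_k t}e^{-\Lambda|\hat s|}$ uniformly in $t,\hat s,k$ is finite $\mathbb P$-a.s., so $\Omega^N:=\{\omega: C_\Lambda(\omega)\le N\}$ is an increasing sequence with $\mathbb P(\Omega^N)\to 1$ and $\bigcup_N\Omega^N=\Omega$ up to a null set. Comparing the definitions (\ref{PHI-B1})--(\ref{PHI+B1}) with (\ref{412}) shows that on $\Omega^N$ the minimum defining $\Phi^N$ equals $1$, hence $\Phi^N(t,\theta_{\hat s}\omega)P^k=\Phi(t,\theta_{\hat s}\omega)P^k$ for all admissible $t,\hat s,k$. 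Consequently, for $\omega\in\Omega^N$ the truncated equation (\ref{VN}) reduces \emph{pathwise} to the genuine equation (\ref{V}), i.e.\ $Y^N$ solves (\ref{V}) for every $\omega\in\Omega^N$.

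The core of the argument is to let $N\to\infty$. I would show that $Y^N(t,\omega)$ converges $\mathbb P$-a.s. to some $\hat Y(t,\omega)$; the natural route is to prove that the sequence stabilizes on each $\Omega^N$, so that for a.e.\ $\omega$ one has $\hat Y(\cdot,\omega)=Y^N(\cdot,\omega)$ for all $N$ beyond some $N_0(\omega)$. Granting this, both required properties come for free: since $Y^N$ solves (\ref{V}) on $\Omega^N$ and $\bigcup_N\Omega^N=\Omega$ a.s., the limit $\hat Y$ solves (\ref{V}) almost surely; and because $\hat Y(t,\omega)=\lim_N Y^N(t,\omega)$ pointwise while each $Y^N$ is $\tau$-periodic in the cocycle sense, $\hat Y(t+\tau,\omega)=\lim_N Y^N(t+\tau,\omega)=\lim_N Y^N(t,\theta_\tau\omega)=\hat Y(t,\theta_\tau\omega)$, the limits existing a.s.\ because $\theta_\tau$ preserves $\mathbb P$.

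I expect the convergence in the third paragraph to be the main obstacle. The estimates of Lemmas \ref{LEMMA3C4} and \ref{LEMMA4C4} (through $\rho^N$, $K^N_1$, $K^N_2$) and the Lipschitz constant of ${\cal M}^N$, which carries a factor $N^2$, all \emph{grow} with $N$, so neither the Wiener--Sobolev compactness of Theorem \ref{B-S2} nor a Banach contraction is available uniformly in $N$; in particular one cannot expect a naive global fixed-point uniqueness. The resolution is genuinely localized: on $\Omega^N$ all later iterates $Y^{N'}$, $N'\ge N$, satisfy the \emph{same} untruncated equation, and one must show they coincide there, controlling the discrepancy off $\Omega^N$ by the fast (Gaussian-type) decay of $\mathbb P(C_\Lambda>N)$. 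Throughout, it is essential to run these comparisons with the $L^2(\Omega\times{\cal O})$-type estimates that keep $\sum_k|F^k|^2\le\|F\|_\infty^2$ and $\sum_k|\nabla F^k|^2$ grouped together, as in (\ref{eqn:boundedness_F}) and the splitting trick (\ref{eqn:trick}), rather than a pathwise Gronwall bound summing $\sum_k\|\Phi P^k\|$: the latter would demand $\sum_k|\mu_k|^{-1}<\infty$, which fails once $d\ge 2$ by the Weyl growth of the eigenvalues $\mu_k$. This is exactly the refinement highlighted in the Remark following Lemma \ref{LEMMA3C4}, and the remaining computations run parallel to the corresponding localization argument in \cite{feng-wu-zhao}.
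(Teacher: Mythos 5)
Your first two steps (invoking Theorem \ref{THMC4M} for each $N$, and observing that on $\Omega^N=\{C_\Lambda\le N\}$ the minimum in (\ref{PHI-B1})--(\ref{PHI+B1}) equals $1$, so that $Y^N$ solves the untruncated equation (\ref{V}) pathwise there) agree with the paper. The gap is in your third step: you need the sequence $(Y^N)$ to \emph{stabilize}, i.e.\ $Y^{N'}=Y^N$ on $\Omega^N$ for all $N'\ge N$, and this cannot be proved. The maps $Y^N$ are produced by the generalized Schauder theorem (Theorem \ref{Schauder}), which gives existence only, never uniqueness; the operator ${\cal M}^N$ is not a contraction (its Lipschitz constant in Step 3 of Lemma \ref{LEMMA3C4} carries the factor $N^2\|\nabla F\|_\infty^2$, not anything $<1$), and the paper nowhere establishes uniqueness of solutions to (\ref{V}) or (\ref{VN}). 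So even though $Y^N$ and $Y^{N'}$ both satisfy the same pathwise equation on $\Omega^N$, nothing forces them to coincide there, and no decay rate of $\mathbb{P}(C_\Lambda>N)$ can substitute for the missing uniqueness: coincidence of two fixed points is a pathwise statement, not a smallness-in-probability statement. Your own caveat that ``one cannot expect a naive global fixed-point uniqueness'' is precisely the reason the limit $\hat Y=\lim_N Y^N$ need not exist.

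The paper's proof avoids limits altogether by a disjoint patching. It first replaces $\Omega_N$ by the $\theta_{n\tau}$-invariant sets $\Omega^{\ast}_N=\bigcup_{n=-\infty}^{\infty}\theta_{n\tau}^{-1}\Omega_N$, which still increase to a full-measure set $\hat\Omega$, and then \emph{defines}
\begin{equation*}
\hat{Y}\ :=\ Y^1\chi_{\Omega^{\ast}_1}+Y^2\chi_{\Omega^{\ast}_2\setminus\Omega^{\ast}_1}+\cdots+Y^N\chi_{\Omega^{\ast}_N\setminus\Omega^{\ast}_{N-1}}+\cdots,
\end{equation*}
i.e.\ on each $\omega$ it simply uses the solution with the smallest admissible index, with no claim that different $Y^N$ agree anywhere. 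Because (\ref{V}) is a \emph{random} (not stochastic) integral equation, whether it holds at a given $\omega$ depends only on the path at that $\omega$, so this patched map still solves (\ref{V}) a.s.; and because each $\Omega^{\ast}_N$ is $\theta_\tau$-invariant, $\chi_{\Omega^{\ast}_N\setminus\Omega^{\ast}_{N-1}}(\omega)=\chi_{\Omega^{\ast}_N\setminus\Omega^{\ast}_{N-1}}(\theta_\tau\omega)$, which is exactly what makes $\hat Y(t+\tau,\omega)=\hat Y(t,\theta_\tau\omega)$ go through. Note that your plain sets $\Omega^N$ would not suffice for this last identity, since they are not $\theta_\tau$-invariant; the passage from $\Omega_N$ to $\Omega^{\ast}_N$ is an essential ingredient you would also need. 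With these two changes (patching instead of stabilization, and invariant localizing sets) your argument becomes the paper's proof.
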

\begin{proof}
 Now define a subset of $\Omega$ as
$$\Omega_N:=\left\{\omega,\ C_{\Lambda}(\omega)<N\right\}.$$
As the random variable $C_{\Lambda}(\omega)$ is tempered from above, it is easy to get
\begin{equation*}
\mathbb{P}(\Omega_N)\to 1,
\end{equation*}
as $N \to \infty$. Note also $\Omega_N$ is an increasing sequence of sets, thus $\cup_{n}\Omega_N=\hat{\Omega}$ and $\hat{\Omega}$ has full measure, and is invariant with respect to $\theta$.
Then define
\begin{equation*}
\Omega^{\ast}_N=\bigcup_{n=-\infty}^{\infty}\theta_{n\tau}^{-1}\Omega_N,
\end{equation*}
and $\Omega^{\ast}_N$ is invariant with respect to $\theta_{n\tau}$ for each n. Besides we have $\Omega^{\ast}_N\subset \Omega^{\ast}_{N+1}$, which leads to
\begin{eqnarray*}
\bigcup_{N}\Omega^{\ast}_N=\bigcup_{N}\bigcup_{n=-\infty}^{\infty}\theta_{n\tau}^{-1}\Omega_N=\bigcup_{n=-\infty}^{\infty}\theta_{n\tau}^{-1}\left(\bigcup_{N}\Omega_N\right)=\bigcup_{n=-\infty}^{\infty}\theta_{n\tau}^{-1}\hat{\Omega}=\bigcup_{n=-\infty}^{\infty}\hat{\Omega}=\hat{\Omega},
\end{eqnarray*}
with $\mathbb{P}(\hat{\Omega})=1$.

Now we can define $Y:\hat{\Omega}\times\mathbb{R}\to L_0^2({\cal O})$, as a combinations of $Y_N$ such that
\begin{eqnarray}\label{NJ521}
Y:=Y_1\chi_{\Omega^{\ast}_1}+Y_2\chi_{\Omega^{\ast}_2\setminus\Omega^{\ast}_1}+\cdots+Y_N\chi_{\Omega^{\ast}_N\setminus\Omega^{\ast}_{N-1}}+\cdots,
\end{eqnarray}
and it is easy to see that $Y$ is $\mathcal{B}(\mathbb{R})\otimes\mathcal{F}$ measurable and thus Y also satisfies the following property
\begin{eqnarray*}
Y(t+\tau,\omega)&=&Y_1(t+\tau,\omega)\chi_{\Omega^{\ast}_1}(\omega)+Y_2(t+\tau,\omega)\chi_{\Omega^{\ast}_2\setminus\Omega^{\ast}_1}(\omega)+\cdots+Y_N(t+\tau,\omega)\chi_{\Omega^{\ast}_N\setminus\Omega^{\ast}_{N-1}}(\omega)+\cdots\\
&=&Y_1(t,\theta_{\tau}\omega)\chi_{\Omega^{\ast}_1}(\omega)+Y_2(t,\theta_{\tau}\omega)\chi_{\Omega^{\ast}_2\setminus\Omega^{\ast}_1}(\omega)+\cdots+Y_N(t,\theta_{\tau}\omega)\chi_{\Omega^{\ast}_N\setminus\Omega^{\ast}_{N-1}}(\omega)+\cdots\\
&=&Y_1(t,\theta_{\tau}\omega)\chi_{\Omega^{\ast}_1}(\theta_{\tau}\omega)+Y_2(t,\theta_{\tau}\omega)\chi_{\Omega^{\ast}_2\setminus\Omega^{\ast}_1}(\theta_{\tau}\omega)+\cdots+Y_N(t,\theta_{\tau}\omega)\chi_{\Omega^{\ast}_N\setminus\Omega^{\ast}_{N-1}}(\theta_{\tau}\omega)+\cdots\\
&=&Y(t,\theta_{\tau}\omega).
\end{eqnarray*}
Moreover $Y$ is a fixed point of $\mathcal{M}$.

We can easily extend $Y$ to the whole $\Omega$ as $\mathbb{P}(\hat{\Omega})=1$, which is distinguishable with $Y$ defined in (\ref{NJ521}).  
\end{proof}

\section{Stochastic Allen-Cahn equation with periodic perturbation}\label{sec:5}
In this section, we relax the boundedness condition on the nonlinear drift as an extension of the results that we have developed in this paper. 
The equation considered in this section is the stochastic Allen-Cahn equation with periodic forcing where only a weakly dissipative condition is
satisfied for the nonlinear term. We introduce a sequence of SPDEs with coefficients as the cut-off of the coefficients in the original Allen-Cahn equation and apply Theorem \ref{Main} and Theorem \ref{THMC4M} to the truncated equation. Then we conclude the existence of random periodic solution to the original SPDE through a localization argument. 

Consider the following SPDE
\begin{eqnarray}\label{eqn:A-C eqn}
\Bigg\{\begin{array}{l}du(t,x)=\Delta u(t,x)\,dt+F(t,u(t,x))dt+Bu(t,x)dW(t),  \ \ \ \ \ \ t\geq s \\
u(s)=\psi\in L^2({\cal O}),\\
u(t)|_{\partial {\cal O}}=0.
                                     \end{array}
\end{eqnarray}
where $F(t,u)$ satisfies Condition (P) with period $\tau>0$, continuity in $t$ and $C^1$ in $u$, and a weakly dissipative condition 
\begin{eqnarray}\label{eqn4.1}
uF(t,u)\leq -M u^2+L,
\end{eqnarray}
for some constant positive constant $M$ and $L$, 
${\cal O}$ is a bounded open subset of $\mathbb{R}^d$ with smooth boundary, $B$ satisfies Condition (B), $W(t)$ is a $L^2({\cal O})$-valued Brownian motion on a probability space $(\Omega,{\cal F},\mathbb{P})$ as in the previous sections. We assume that $M>{\sigma^2\over 2}$, where $\sigma^2:=\max_i \sigma_i^2$.

\begin{prop}\label{prop5.1}
 Under the above conditions, SPDE (\ref{eqn:A-C eqn}) has a random periodic solution of period $\tau$. 
\end{prop}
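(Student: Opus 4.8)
To reduce the unbounded Allen--Cahn nonlinearity to the setting already treated, for each $n\in\mathbb{N}$ I would fix a cut-off $F_n:\mathbb{R}\times\mathbb{R}\to\mathbb{R}$ which is continuous in $t$, $C^1$ and globally bounded in $u$ with globally bounded Jacobian, which satisfies Condition (P), which coincides with $F$ on $\{|u|\le n\}$, and which is \emph{inward pointing} for $|u|>n$ (for instance $F_n(t,u)=F(t,\pi_n(u))$ with $\pi_n$ the truncation of the real line to $[-n,n]$; since (\ref{eqn4.1}) forces $F(t,\pm n)$ to have sign $\mp$ for $n$ large, the tail of $F_n$ still pushes towards the origin). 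Each truncated version of (\ref{eqn:A-C eqn}) then satisfies all hypotheses of Theorem \ref{Main}, so by Theorem \ref{THMC4M} and Theorem \ref{Main2} it admits a random periodic solution $Y_n:\mathbb{R}\times\Omega\to L^2({\cal O})$ of period $\tau$, solving the IHRIE (\ref{V}) with $F$ replaced by $F_n$.

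\textbf{Step 2 (A priori bound uniform in $n$).} The decisive ingredient is a bound on $Y_n$ independent of $n$, which must come from the dissipativity and not from $\|F_n\|_\infty$ (the latter diverges with $n$). Applying Itô's formula to $\|u_n(t)\|_{L^2({\cal O})}^2$ for the solution $u_n$ of the $n$-th equation, the Laplacian contributes $-2\|\nabla u_n\|^2\le 0$, the drift contributes $2\int_{{\cal O}}u_nF_n(t,u_n)\,dx\le -2M\|u_n\|^2+2L|{\cal O}|$ on the region where $F_n=F$, and the Itô correction generated by the multiplicative noise $Bu_n\,dW$ equals $\sum_k\sigma_k^2(u_n^k)^2\le\sigma^2\|u_n\|^2$, where $u_n^k=\langle u_n,\phi_k\rangle$. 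Because $M>\sigma^2/2$ the effective linear rate $-(2M-\sigma^2)$ is strictly negative, and a Gronwall-type argument applied to the $\tau$-periodic $Y_n$ yields
\begin{equation*}
\sup_{t\in\mathbb{R}}\E\|Y_n(t,\cdot)\|_{L^2({\cal O})}^2\le \frac{2L|{\cal O}|}{2M-\sigma^2},
\end{equation*}
uniformly in $n$. To make the localisation effective at the level of the \emph{pointwise} cut-off, I would upgrade this to a pathwise, $n$-independent tempered bound: conjugating out the noise through the stochastic evolution operator $\Phi(t,\omega)=T_te^{BW(t)}$ of (\ref{PExpo}) turns the problem into a random PDE with no stochastic integral, on which the pointwise dissipativity (\ref{eqn4.1}) and the comparison principle available for Allen--Cahn--type reaction--diffusion equations provide a tempered random bound $r(\omega)$ on $\sup_{t\in[0,\tau]}\|Y_n(t,\omega)\|_{L^\infty({\cal O})}$ that does not depend on $n$.

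\textbf{Step 3 (Localisation).} For $m\ge n$ the nonlinearities $F_m$ and $F_n$ agree on $\{|u|\le n\}$, so on the event $\{r(\omega)\le n\}$ the path $Y_n(\cdot,\omega)$ never leaves the truncation region and therefore solves every truncated equation of level $\ge n$, and in particular the original equation; by uniqueness $Y_n=Y_m$ there. Since $r$ is tempered the sets $\{r\le n\}$ increase to a set of full measure, and intersecting with a $\theta_\tau$-invariant exhaustion exactly as in the construction (\ref{NJ521}) of Theorem \ref{Main2}, I would patch the $Y_n$ into a single map $Y:\mathbb{R}\times\Omega\to L^2({\cal O})$ that solves the untruncated IHRIE (\ref{V}) and satisfies $Y(t+\tau,\omega)=Y(t,\theta_\tau\omega)$, the latter relation being preserved under the patching because each $\{r\le n\}$ is compatible with $\theta_\tau$. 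Theorem \ref{THM1C4M} then identifies $Y$ with a random periodic solution of (\ref{eqn:A-C eqn}), which is the assertion.

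\textbf{Main obstacle.} The hard part is Step 2: because the cut-off acts on the pointwise value $Y_n(t,x)$, the identity $F_n(t,Y_n)=F(t,Y_n)$ requires a uniform bound in a norm stronger than the $L^2({\cal O})$ energy bound (morally $L^\infty({\cal O})$, or a high enough $L^p$). Extracting such a tempered, $n$-independent bound from the weak dissipativity — after removing the multiplicative noise via $\Phi=T_te^{BW}$ and exploiting that $M>\sigma^2/2$ leaves a strictly negative effective rate — and then verifying that the patched limit genuinely satisfies the equation with the unbounded $F$, is where the real work lies.
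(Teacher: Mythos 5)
Your overall strategy --- truncate $F$, solve the truncated equations via Theorems \ref{Main} and \ref{THMC4M}, prove a uniform-in-$N$ energy bound from the dissipativity with effective rate $2M-\sigma^2>0$, then localize --- is the paper's strategy, and your uniform bound $\sup_{t}\E\|Y^N(t,\cdot)\|^2_{L^2({\cal O})}\le 2L\,m({\cal O})/(2M-\sigma^2)$ is exactly the paper's constant $\tilde M$. But the two steps you yourself flag as ``where the real work lies'' contain genuine gaps. (i) The upgrade to a pathwise tempered $L^\infty({\cal O})$ bound by ``conjugating out the noise'' and invoking a comparison principle fails for this noise structure: by (\ref{B1}) and (\ref{PExpo}), $B$ and $e^{\pm BW(t)}$ act diagonally in the eigenbasis $\{\phi_k\}$, i.e. $e^{BW(t)}u=\sum_k e^{\sigma_k W_t^k}\langle u,\phi_k\rangle\phi_k$, which is a nonlocal operator in $x$, not a pointwise multiplication. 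Hence the conjugated drift $e^{-BW(t)}F(t,e^{BW(t)}v)$ is nonlocal, the pointwise inequality $uF(t,u)\le -Mu^2+L$ does not survive the conjugation, and no maximum or comparison principle applies to the resulting random PDE. (This device would be sound for scalar noise $\sigma u\circ dW_t$, but not under Condition (B).) (ii) In Step 3 you identify $Y_n=Y_m$ on the good event ``by uniqueness'', but no uniqueness is available anywhere: the $Y^N$ are produced by the generalized Schauder fixed point theorem, which yields existence only, and they solve coupled forward-backward infinite-horizon equations, so even uniqueness for the Cauchy problem would not identify them.

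The paper replaces both steps by a cheaper device, which is the actual content of its localization argument. From the uniform $L^2(\Omega\times{\cal O})$ bound, Chebyshev's inequality gives $(\mathbb{P}\times m)\big(|Y^N(t,x)|^2>2^n\big)\le \tilde M\, 2^{-n}$, and Borel--Cantelli applied with respect to the \emph{product} measure $\mathbb{P}\times m$ yields an a.e.-finite bound $c(\omega,x)$ with $|Y^N(t,x)|^2\le c(\omega,x)$; no $L^\infty({\cal O})$ estimate, no temperedness of a pathwise bound, and no comparison principle are needed. Localization is then carried out on subsets $\hat\Omega_N=\{c<N\}$ of $\Omega\times{\cal O}$ and their $\theta_{n\tau}$-saturations $\hat\Omega_N^{\ast}$, and $Y$ is \emph{defined} by patching, $Y=Y^N$ on the disjoint sections $(\hat\Omega_N^{\ast})_x\setminus(\hat\Omega_{N-1}^{\ast})_x$, so no identification of different $Y^N$'s (hence no uniqueness) is ever invoked. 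On each such piece $|Y^N(t,x)|^2<N\le 2^N$, so $F^N(\cdot,Y^N)=F(\cdot,Y^N)$ there, and the patched $Y$ satisfies the untruncated integral equation (\ref{eqn4.2}) for a.e. $\omega$ and a.e. $x$, i.e. as a weak solution; the relation $Y(t+\tau,\omega)=Y(t,\theta_\tau\omega)$ survives the patching because the sets $\hat\Omega_N^{\ast}$ are invariant under $\theta_{n\tau}$. If you wish to keep your route, the missing ingredient is precisely an $n$-independent pointwise bound, and the product-space Borel--Cantelli argument is the paper's substitute for the maximum principle that is unavailable here.
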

\begin{proof}
For any $N\in {\mathbb N}$, define $F^N$ to be a $C^1$ function such that when $u^2<2^N$, $F^N(t,u)= F(t,u)$,  when $u<-\sqrt{2^{N}+1}$, $F^N(t, u)=F(t, -\sqrt{2^N+1})$ and when $u>\sqrt{2^{N}+1}$, $F^N(t, u)=F(t, \sqrt{2^N+1})$. This can be seen from the partition of unity. It is easy to see that $F^N$ is bounded with bounded derivative with respect to $u$. 
Then by Theorem \ref{Main} and Theorem \ref{THMC4M}, the cut-off SPDE 
\begin{eqnarray*}
\Bigg\{\begin{array}{l}du(t,x)=\Delta u(t,x)\,dt+F^N(t, u(t,x))\, dt+Bu(t,x)dW(t),  \ \ \ \ \ \ t\geq s \\
u(s)=\psi\in L^2({\cal O}),\\
u(t)|_{\partial {\cal O}}=0,
                                     \end{array}
\end{eqnarray*}
has a random periodic solution $Y^N$ which satisfies the integral equation
\begin{eqnarray}\label{eqn4.2}
Y^N(t,\omega)=\int_{-\infty}^t \Phi (t-s,\theta_s\omega)F^N(s, Y^N(s,\omega))ds.
\end{eqnarray}
Now we prove that $Y^N$ is uniformly bounded in $L^2({\cal O}\times\Omega)$. 
Denote $Y^{N,i}_r:=<Y^N_r, \phi_i>_{\mathbb H}$. It is easy to see that 
$$\int_{\cal O} |Y_r^N|^2dx=\int_{\cal O}|\sum_iY^{N,i}_r\phi_i(x)|^2 dx= \sum_{i}|Y^{N,i}_r|^2.$$
For any real value $K$, applying It$\rm\hat{o}$'s formula to $e^{Kr}|Y^N_r|^2$
 and using the estimate (\ref{eqn4.1}), we have
\begin{eqnarray*}
&&\int_{\cal O}\mathbb{E} [e^{Kt}|Y^N_t|^2]dx\\
&=&\mathbb{E} \int_{\cal O}\int_{-\infty}^t \left [Ke^{Kr}|Y^N_r|^2+2 Y^N_r\Delta Y^N_r e^{Kr}+2e^{Kr} Y^N_rF^N(r, Y^N(r,\omega))+e^{Kr}|\sum_{i=1}^\infty \sigma_i Y^{N,i}_r\phi_i(x)|^2\right]drdx\\
&\leq & \mathbb{E} \int_{\cal O}\int_{-\infty}^t  e^{Kr}[K|Y^N_r|^2+2 Y^N_r\Delta Y^N_r+2(-M|Y^N_r|^2+L)]drdx+\int_{-\infty}^t e^{Kr}\sum_{i=1}^\infty \sigma_i^2 |Y^{N,i}_r|^2dr\\
&\leq &\mathbb{E} \int_{-\infty}^t\int_{\cal O} e^{Kr}\Big[(K-2M+\sigma^2)|Y^N_r|^2+2L+2 Y^N_r\Delta Y_r^N\Big]drdx\\
&=& \mathbb{E}\int_{-\infty}^t \Big[e^{Kr}\cdot2L+2 e^{Kr}Y^N_r\Delta Y_r^N\Big]dr,
\end{eqnarray*}
by choosing $K=2M-\sigma^2>0$. Therefore, 
\begin{eqnarray*}
\int_{{\cal O}} \mathbb{E} |Y^N_t|^2dx&\leq&  \int_{\cal O}\int_{-\infty}^t e^{-K(t-r)}2Ldrdx+ \mathbb{E}\int_{-\infty}^t 2 e^{-K(t-r)}\int_{{\cal O}}Y^N_r\Delta Y_r^Ndxdr\\
&=&{2L\over K} m ({\cal O})+ \mathbb{E}\int_{-\infty}^t 2 e^{-K(t-r)}(-\int_{{\cal O}}|\nabla Y^N_r|^2dx)dr\\
&\leq & {2L\over K} m ({\cal O})\\
&:=&\tilde M,
\end{eqnarray*}
by using the integration by parts formula. To make the Lebesgue measure in ${\cal O}$ a probability measure, let $m({\cal O})=1$ without losing any generality. Otherwise, we can always achieve this via normalization. Note that the constant $\tilde M$ is independent of $N$.
By Chebyshev's inequality, 
\begin{eqnarray*}
(\mathbb P\times m)(|Y^N(t,x)|^2>2^n) \leq {1\over {2^n}}\mathbb{E}\int_{{\cal O}} |Y^N_t|^2dx\leq {{\tilde M}\over {2^n}}.
\end{eqnarray*}
As $\sum_{n=1}^\infty {1\over {2^n}}<\infty$, we can use Borel-Cantelli lemma to get
\begin{eqnarray*}
(\mathbb P\times m)\Big(\limsup_{n\to \infty}\big\{|Y^N(t,x)|^2>2^n\big\}\Big)=0,
\end{eqnarray*}
and therefore
\begin{eqnarray*}
(\mathbb P\times m)\Big(\liminf_{n\to \infty}\big\{|Y^N(t,x)|^2\leq 2^n\big\}\Big)=1,
\end{eqnarray*}
i.e. 
\begin{eqnarray*}
(\mathbb P\times m)\Big(\bigcup_{n=1}^\infty \bigcap_{k\geq n}\big\{|Y^N(t,x)|^2\leq 2^n\big\}\Big)=1.
\end{eqnarray*}
This means for a.s. $(\omega, x)$, there exists an integer $\hat N(\omega, x)$ such that $|Y^N(t,x)|^2\leq 2^{\hat N}:=c(\omega, x)$.

Now define a subset of $\Omega\times {\cal O}$ as
$$\hat \Omega_N:=\left\{(\omega,x)\in \Omega\times {\cal O}:\ c(\omega,x)<N\right\},$$
It is easy to see that 
$(\mathbb{P}\times m)(\hat \Omega_N)\to 1$, as $N\to \infty$. Note also that $\hat \Omega_N$ is an increasing sequence of sets, thus $\cup_{N}\hat \Omega_N=:\hat{\Omega}$ and $\hat{\Omega}$ has full measure, and is invariant with respect to $\theta$.
Define
\begin{equation*}
\hat \Omega^{\ast}_N=\bigcup_{n=-\infty}^{\infty}\theta_{n\tau}^{-1}\hat \Omega_N,
\end{equation*}
then $\hat \Omega^{\ast}_N$ is invariant with respect to $\theta_{n\tau}$ for each n. Besides we have $\hat \Omega^{\ast}_N\subset \hat \Omega^{\ast}_{N+1}$, which leads to
\begin{eqnarray*}
\bigcup_{N}\hat \Omega^{\ast}_N=\bigcup_{N}\bigcup_{n=-\infty}^{\infty}\theta_{n\tau}^{-1}\hat{\Omega}_N=\bigcup_{n=-\infty}^{\infty}\hat{\Omega}=\hat{\Omega},
\end{eqnarray*}
with $(\mathbb{P}\times m)(\hat{\Omega})=1$. For any $(\omega, x)\in \hat \Omega$, there exists $N\in\mathbb N$ such that $(\omega,x)\in \hat \Omega _N$. 

As $(\mathbb{P}\times m)(\hat{\Omega^c})=0$, this will lead to
$$\mathbb{P}((\hat \Omega^c)_x)=0\ {\rm and}\ m((\hat\Omega ^c)_\omega)=0,$$
where 
$$(\hat \Omega^c)_x:= \{\omega: (\omega, x)\in \hat \Omega ^c\},\ (\hat \Omega^c)_\omega:= \{x: (\omega, x)\in \hat \Omega ^c\}.$$
Otherwise, if 
$$\mathbb{P}((\hat \Omega^c)_x)>0\ {\rm or}\ m((\hat\Omega ^c)_\omega)>0,$$
then 
$$(\mathbb{P}\times m)(\hat{\Omega^c})=\int_D \mathbb P((\hat \Omega^c)_x)m(dx)>0,$$
or
$$(\mathbb{P}\times m)(\hat{\Omega^c})=\int_\Omega m((\hat \Omega^c)_\omega)\mathbb P(d\omega)>0,$$
which is a contradiction. Therefore, 
$$
\mathbb{P}((\hat \Omega)_x)=1\ {\rm and}\ m((\hat\Omega )_\omega)=1.
$$ 
Now we define 
$$
(\hat \Omega_N)_x=\{\omega: (\omega,x)\in \hat \Omega_N\}.
$$
Then first it is obvious that $(\hat \Omega_N)_x\subset (\hat \Omega_{N+1})_x$. Moreover, if 
$\omega \in \hat \Omega _x$, so by definition, $(\omega,x)\in \hat \Omega$. 
So there exists an integer $N$ such that $(\omega,x)\in \hat \Omega_N^*$. Thus $\omega\in (\hat \Omega_N^*)_x$.
This means 
$$
(\hat \Omega)_x
=\bigcup_{N}(\hat \Omega_N^*)_x.
$$
Now we can define $Y:\hat{\Omega}\times\mathbb{R}\to L^2({\cal O})$, as a combinations of $Y^N$ such that
\begin{eqnarray}\label{NJ521}
Y(t,\omega)(x):&=&Y^1(t,\omega)(x)\chi_{(\hat \Omega^{\ast}_1)_x}(\omega)+Y^2(t,\omega)(x)\chi_{(\hat \Omega^{\ast}_2)_x\setminus(\hat \Omega^{\ast}_1)_x}(\omega)+\cdots\nonumber
\\
&&
+Y^N(t,\omega)(x)\chi_{(\hat \Omega^{\ast}_N)_x\setminus(\hat\Omega^{\ast}_{N-1})_x}(\omega)
+\cdots,
\end{eqnarray}
and it is easy to see that $Y$ is $\mathcal{B}((-\infty, T))\otimes\mathcal{F}$ measurable and thus Y also satisfies the following property
\begin{eqnarray*}
&&
Y(t+\tau,\omega)\\
&=&Y^1(t+\tau,\omega)\chi_{(\hat\Omega^{\ast}_1)_{\cdot}}(\omega)+Y^2(t+\tau,\omega)\chi_{(\hat\Omega^{\ast}_2)_{\cdot}\setminus(\hat \Omega^{\ast}_1)_{\cdot}}(\omega)+\cdots+Y^N(t+\tau,\omega)\chi_{(\hat \Omega^{\ast}_N)_{\cdot}\setminus(\hat \Omega^{\ast}_{N-1})_{\cdot}}(\omega)+\cdots\\
&=&Y^1(t,\theta_{\tau}\omega)\chi_{(\hat \Omega^{\ast}_1)_{\cdot}}(\omega)+Y^2(t,\theta_{\tau}\omega)\chi_{(\hat 
\Omega^{\ast}_2)_{\cdot}\setminus(\hat \Omega^{\ast}_1)_{\cdot}}(\omega)+\cdots+Y^N(t,\theta_{\tau}\omega)\chi_{(\hat 
\Omega^{\ast}_N)_{\cdot}\setminus(\hat \Omega^{\ast}_{N-1})_{\cdot}}(\omega)+\cdots\\
&=&Y^1(t,\theta_{\tau}\omega)\chi_{(\hat \Omega^{\ast}_1)_{\cdot}}(\theta_{\tau}\omega)+Y^2(t,\theta_{\tau}\omega)\
\chi_{(\hat \Omega^{\ast}_2)_{\cdot}\setminus(\hat \Omega^{\ast}_1)_{\cdot}}(\theta_{\tau}\omega)+\cdots+Y^N(t,\theta_{\tau}
\omega)\chi_{(\hat \Omega^{\ast}_N)_{\cdot}\setminus(\hat \Omega^{\ast}_{N-1})_{\cdot}}(\theta_{\tau}\omega)+\cdots\\
&=&Y(t,\theta_{\tau}\omega).
\end{eqnarray*}
 We can extend $Y$ to the whole $\Omega$ , i.e. for a.s. $\omega\in \Omega$, 
$$Y(t,\omega, x)=Y^N(t,\omega, x), \ (\omega, x)\in \Omega_N^*\setminus\Omega_{N-1}^*$$
holds for a.s. $x\in {\cal O}$. So for a.s. $\omega\in \Omega$, $Y$ satisfies (\ref {eqn4.2}) for a.s. $x\in {\cal O}$. This means $Y$ is a weak solution for a.s. $\omega\in \Omega$.
\end{proof}
\begin{exmp}
SPDE (\ref{eqn:A-C eqn}) will become the stochastic Allen-Cahn equation with periodic perturbation when we takie $F(t,u):= u-u^3+\sin t$ in (\ref{eqn:A-C eqn}).
For this $F$, the weakly dissipative condition is satisfied as for any $M>0$,
\begin{eqnarray*}
uF(t,u)&=&u^2-u^4+u\sin t\\
&=&(u^2-u^4)I_{\{u^2-1>M\}}+(u^2-u^4)I_{\{u^2-1\leq M\}}+\epsilon u^2+{1\over \epsilon}\sin^2 t\\
&\leq& -Mu^2+(1+M)+\epsilon u^2+{1\over \epsilon}\\
&=& -\tilde M u^2+L,
\end{eqnarray*}
where $\tilde M:=M-\epsilon$ and $L:=1+M+{1\over \epsilon}$. We can always choose $M$ to be big enough such that $\tilde M>{\sigma^2 \over 2}$.
So by Proposition \ref{prop5.1}, we know that the the stochastic Allen-Cahn equation has a random periodic solution of period $2\pi$.
\end{exmp}

\section{Appendix}
\begin{proof}[Proof of Corollary \ref{LEMMA2C4}]
First note that
\begin{eqnarray*}
\|T_{t}P^k-P^k\|^2&=&\sup_{\|v\|_H=1}\int_{{\cal O}}|(T_tP^k-P^k)v(x)|^2dx=\sup_{\|v\|_H=1}\int_{{\cal O}}|\langle(e^{\mu_kt}-1)\phi_k(\cdot),v(\cdot)\rangle\phi_k(x)|^2dx\\
&=&\sup_{\|v\|_H=1}(1-e^{\mu_kt})^2|v^k(\cdot)|^2<\sup_{\|v\|_H=1}(1-e^{\mu_kt})|v^k(\cdot)|^2<|\mu_k||t|,
\end{eqnarray*}
which immediately implies (\ref{LEMMA2C41}).

To show (\ref{LEMMA2C42}), let us check with the following SPDEs of $\Phi_t$ derived from (\ref{phi}),
\begin{eqnarray*}
\Bigg\{\begin{array}{l} d\Phi_tP^k=\mathcal{L}\Phi_tP^kdt+B(\Phi_tP^k)\circ dW^k_t,\\
\\
\Phi_0P^k=P^k.
\end{array}
\end{eqnarray*}
It is easy to see that for $t\geq 0$ and $k\geq m+1$,
\begin{eqnarray*}
\Phi_tP^k=T_tP^k+\int_0^tT_{t-s}B(\Phi_sP^k)\circ dW^k_s.
\end{eqnarray*}
Thus we have
\begin{eqnarray*}
\mathbb{E}\|\Phi_{t}P^k-T_tP^k\|^2
&=&\mathbb{E}\Big\|\int_0^{t}T_{t-h}P^kB\Phi_hP^k\circ dW_h\Big\|^2\\
&= &\sigma_k^2 \mathbb{E}\int_{{\cal O}}\Big|\int_0^{t}T_{t-h}P^k\Phi_hP^k(\phi_k)(x)\circ dW^k_h\Big|^2dx
\end{eqnarray*}
Note that the relation between Stratonovich integral and It\^o integral, i.e.,
\begin{align*}
&T_tP^k\int_0^{t}T_{-h}P^k\Phi_hP^k(\phi_k)(x)\circ dW^k_h\\
= & T_tP^k\int_0^{t}T_{-h}P^k\Phi_hP^k(\phi_k)(x) dW^k_h+ \frac{1}{2}T_tP^k\int_0^{t}(T_{-h}P^k)^2\Phi_hP^k(\phi_k)(x) d h,
\end{align*}
which leads to the following estimates
\begin{eqnarray*}
& & \sigma_k^2 \mathbb{E}\int_{{\cal O}}\Big|\int_0^{t}T_{t-h}P^k\Phi_hP^k(\phi_k)(x)\circ dW^k_h\Big|^2dx\\
&\leq &2\sigma_k^2 \|T_tP^k\|^2\Big(\int_{{\cal O}}\int_0^{t}\|T_{-h}P^k\|^2\mathbb{E}|\Phi_hP^k(\phi_k)(x)|^2 dhdx 
+\int_{{\cal O}}\mathbb{E}\Big|\int_0^{t}\|T_{-h}P^k\|^2|\Phi_hP^k(\phi_k)(x)| dh\Big|^2dx \Big)\\
&\leq &C\max\big\{1,e^{2\sigma^2_k |t|}\big\}\sigma_k^2 (|t|+|t|^2).
\end{eqnarray*} 
Here we used the following estimate
\begin{eqnarray*}
\mathbb{E}|\Phi_hP^k(\phi_k)(x)|^2\leq \mathbb{E}e^{2\mu_kh+2\sigma_k W_h^k}|(\phi_k)(x)|^2\leq e^{2\mu_kh+2\sigma^2_k  h}|(\phi_k)(x)|^2.
\end{eqnarray*}
Thus we prove (\ref{LEMMA2C42}) for the case $t\geq 0$ and $k\geq m+1$. The case for $t<0$ and $1\leq k\leq m$ can be derived analogously.  
\end{proof}


\section*{Acknowledgements}

 CF and HZ would like to acknowledge the financial support of Royal Society Newton Advanced Fellowship NA150344.

\bibliographystyle{siamplain}
\bibliography{references}

   \end{document}